\tikzset{strike thru arrow/.style={
    decoration={markings, mark=at position 0.5 with {
        \draw [-] 
            ++ (-1pt,-2pt) 
            -- ( 1pt, 2pt);}
    },
    postaction={decorate},
}}
\numberwithin{equation}{section}
\newtheorem*{rep@theorem}{\rep@title}
\newcommand{\newreptheorem}[2]{%
\newenvironment{rep#1}[1]{%
 \def\rep@title{#2 \ref{##1}}%
 \begin{rep@theorem}}%
 {\end{rep@theorem}}}
\newtheorem{theorem}[equation]{Theorem}
\newtheorem{proposition}[equation]{Proposition}
\newtheorem{lemma}[equation]{Lemma} 
\newtheorem{corollary}[equation]{Corollary}
\theoremstyle{definition}
\newtheorem{example}[equation]{Example}
\newtheorem{remark}[equation]{Remark}
\newtheorem{definition}[equation]{Definition}
\newtheorem{notation}[equation]{Notation}
\newcommand\category[1]{\ensuremath{\mathbf{#1}}}
\DeclareMathOperator{\SRips}{\mathcal{SR}^u}
\DeclareMathOperator{\nSRips}{\mathcal{SR}}
\DeclareMathOperator{\DRips}{\mathcal{DR}^u}
\DeclareMathOperator{\nDRips}{\mathcal{DR}}
\DeclareMathOperator{\ClDR}{\overline{\mathcal{DR}}}
\DeclareMathOperator{\SCech}{{\mathcal{S\check C}}{}^u}
\DeclareMathOperator{\nSCech}{\mathcal{S\check C}}
\DeclareMathOperator{\DCech}{{\mathcal{D\check C}}{}^u}
\DeclareMathOperator{\nDCech}{\mathcal{D\check C}}
\DeclareMathOperator{\Offset}{\mathcal{O}}
\DeclareMathOperator{\DCov}{\mathcal{DO}}
\newcommand{\ClR}{\bar{\mathcal R}}
\newcommand{\Rips}{\mathcal{R}}
\newcommand{\Cech}{\mathcal{\check C}}
\newcommand{\AmbientSpace}{Z}
\newcommand{\Ball}{B}
\DeclareMathOperator{\rank}{rank}
\DeclareMathOperator{\colim}{colim}
\DeclareMathOperator{\hocolim}{hocolim}
\DeclareMathOperator{\ob}{ob}
\DeclareMathOperator{\Sd}{\mathcal{S}}
\DeclareMathOperator{\Deg}{\mathcal{D}}
\DeclareMathOperator{\Bary}{Bary}
\DeclareMathOperator{\Mult}{\mathcal{M}^u}
\DeclareMathOperator{\nMult}{\mathcal{M}}
\DeclareMathOperator{\rect}{Rect}
\DeclareMathOperator{\VR}{\Rips}
\newcommand{\MB}{\mathcal{B}}
\DeclareMathOperator{\Supp}{Supp}
\newcommand{\Pro}{d_{Pr}}
\newcommand{\GPr}{d_{GPr}}
\newcommand{\GW}{d_{GW}}
\newcommand{\Wa}{d_{W}}
\DeclareMathOperator{\Simp}{\mathbf{Simp}}
\newcommand{\Vect}[0]{\category{Vec}}
\newcommand{\Top}[0]{\category{Top}}
\newcommand{\op}[0]{\mathrm{op}}
\newcommand{\R}[0]{\mathbb R}
\newcommand{\B}[1]{\mathcal B\ifthenelse{\equal{#1}{}}{}{_{#1}}}
\newcommand{\Bopen}[1]{U\ifthenelse{\equal{#1}{}}{}{(#1)}}
\newcommand{\barc}[1]{\mathcal B\ifthenelse{\equal{#1}{}}{}{(#1)}}
\newcommand{\C}[0]{\mathbf C}
\newcommand{\D}[0]{\mathbf D}
\newcommand{\F}[0]{F}
\newcommand{\G}[0]{G}
\newcommand{\Hcal}[0]{K}
\newcommand{\I}[0]{\mathbf I}
\newcommand{\M}[0]{\mathcal M}
\newcommand{\X}[0]{\mathcal X}
\newcommand{\Y}[0]{\mathcal Y}
\newcommand{\Nbb}[0]{\mathbb N}
\newcommand{\Z}[0]{\mathbb Z}
\newcommand{\htp}{\simeq}
\newcommand{\Id}{\mathrm{Id}}
\newcommand{\OurPoset}{J}
\title[Stability of 2-parameter persistent homology]{Stability of 2-parameter persistent homology}
\author{Andrew J. Blumberg}
\address{Department of Mathematics, University of Texas at Austin, USA}
\email{blumberg@math.utexas.edu}
\thanks{}
\author{Michael Lesnick}
\address{Department of Mathematics and  Statistics, SUNY Albany, Albany, USA}
\email{mlesnick@albany.edu}
\subjclass[2010]{55P99 (primary), 55U99 (secondary)} 
\date{}
\begin{document}

\begin{abstract}
The \v Cech and Rips constructions of
persistent homology are stable with respect to perturbations of
the input data.  However, neither is robust to outliers, and both can be insensitive to 
topological structure of high-density regions of the data.
A natural solution is to consider 2-parameter persistence.  This paper studies the stability of 2-parameter persistent
homology: We show that several
related density-sensitive constructions of bifiltrations from data 
satisfy stability properties accommodating the addition and removal of
outliers.  Specifically, we consider the multicover bifiltration, Sheehy's subdivision bifiltrations, and the degree 
bifiltrations. For the multicover and subdivision bifiltrations, we get 1-Lipschitz stability results
closely analogous to the standard stability results for 1-parameter
persistent homology.  Our results for the degree bifiltrations are weaker, but they are tight, in a sense.  As an application of our theory, we prove a law of large
numbers for subdivision bifiltrations of random data.
\end{abstract}
\maketitle

\section{Introduction}\label{sec:intro}

\subsection{Persistent Homology}\label{Sec:Intro_PH}
Topological data analysis (TDA) provides descriptors of the shape of a
data set by first constructing a diagram of topological spaces from
the data and then applying standard invariants from algebraic topology to this diagram.  The most common version of this data analysis pipeline is \emph{persistent homology}; this takes the diagram of spaces to be a \emph{filtration}, 
i.e., a functor 
\[
\F \colon R\to \Top, 
\]
where $R$ is a totally ordered set (regarded as a category in the usual way), such that if $r\leq s$, then $F_r\subset F_s$ and the map $F_{r,s}:F_r\to F_s$ is the inclusion.

Let $H_i$ denote the 
  $i^{\mathrm{th}}$ homology functor with coefficients in a field $K$ and let $\Vect$ denote the category of $K$-vector spaces.  Composition yields a functor 
\[
H_i\F \colon R \to \Vect.
\]
A fundamental structure theorem \cite[Theorem 1.2]{botnan2020decomposition} (see also \cite{crawley2012decomposition,zomorodian2005computing, webb1985decomposition}) tells us that if $M \colon R
\to \Vect$ is a functor with each $M_r$ finite-dimensional, then $M$ is determined up to natural isomorphism by a \emph{barcode} $\B{}_M$, i.e., a multiset of intervals in $R$; 
each interval of $\B{}_M$ corresponds to an indecomposable in a direct sum
decomposition of $M$.  Thus, provided each vector space of $H_i\F$ is
finite-dimensional, $\B{}_{H_i\F}$ provides a well-defined descriptor of the shape of our data set.  We will write $\B{}_{H_i\F}$ simply as $\B{}_i(\F)$.

The choice of the filtration $\F$ depends on the type of data we are
analyzing and the kind of geometric information about the data we wish
to capture.  When our data set $X$ is a set of points in some ambient
metric space $\AmbientSpace$ (e.g., $\AmbientSpace=\R^n$ with the
Euclidean metric), a common choice is the \emph{offset filtration}
$\Offset(X)\colon (0,\infty)\to \Top$, given by 
\[\Offset(X)_r=\bigcup_{x\in X} \Ball(x,r)\] where $\Ball(x,r)$
denotes the open ball in $\AmbientSpace$ with center
$x$ and radius $r$.  

Let $\Simp$ denote the category of simplicial complexes, which we regard as a subcategory of $\Top$ via geometric realization.   According to the \emph{persistent nerve theorem}
\cite{chazal2008towards,bauer2022unified}, an extension of the usual nerve theorem to diagrams of spaces, 
if finite intersections of balls in $\AmbientSpace$ are either empty or contractible, 
then $\Offset(X)$ has the same persistent homology
as the \emph{\v Cech filtration} $\Cech(X)\colon (0,\infty)\to \Simp$, defined by taking $\Cech(X)_r$ to be the nerve of the collection of balls $\{\Ball(x,r)\mid x\in X\}$.  We call $\Cech(X)_r$ a \emph{\v Cech complex of $X$}.

When our data set is a metric space $(X,\partial_X)$ (not necessarily equipped with an embedding into some ambient space), another common choice of filtration is the  \emph{(Vietoris--)Rips filtration} $\VR(X)\colon (0,\infty)\to \Simp$,  defined by taking $\VR(X)_r$ to be the clique complex on the undirected graph with vertex set $X$ and edge set $\{[x,y]\mid x\ne y,\ \partial_X (x,y)<2r\}$.  We call $\VR(X)_r$ a \emph{(Vietoris--)Rips complex of $X$.}

\subsection{Stability of Persistent Homology and its Limitations}\label{Sec:Intro_Stability}
The stability theory for persistent homology tells us that for both the \v
Cech and Rips constructions of persistent homology, small perturbations of the data lead to
correspondingly small perturbations of the barcodes.  The precise statements require three
definitions: 
\begin{itemize}
\item The Hausdorff distance $d_H$ is a metic on non-empty subsets of a fixed ambient metric space, defined by
\[d_H(X,Y)=\inf \{\delta\geq 0\mid X\subset \Offset(Y)_{\delta}\textup{ and } Y\subset \Offset(X)_{\delta}\};\]
\item The Gromov--Hausdorff distance $d_{GH}$, an adaptation of $d_H$
  to arbitrary metric spaces, is defined by
  taking \[d_{GH}(X,Y)=\inf_{\varphi,\psi} d_H(\varphi(X),\psi(Y)),\]
  where $\varphi\colon X\to Z$ and $\psi\colon Y\to Z$ range over all possible isometric embeddings of $X$ and $Y$ into a common metric space $Z$. 
\item The bottleneck distance $d_B$ is a metric on barcodes; roughly, $d_B(\mathcal C,\mathcal D)$  is the minimum, over all matchings between $\mathcal C$ and $\mathcal D$, of the largest distance between the endpoints of matched intervals; see e.g., \cite{cohen2007stability} or \cite{bauer2015induced} for a precise definition.

\end{itemize}
 \begin{theorem}[Stability of persistent homology \cite{cohen2007stability,chazal2009gromov,chazal2014persistence}]\label{Thm:Classical_Stability}
 \mbox{}
\begin{enumerate}[(i)]
\item For any finite $X,Y\subset \R^n$ and $i\geq 0$, \[d_B(\B{}_i(\Cech(X)),\B{}_i(\Cech(Y))) \leq d_{H}(X,Y).\]
\item For any finite metric spaces $X$ and $Y$, and $i\geq 0$, \[d_B(\B{}_i(\VR(X))), \B{}_i(\VR(Y)) \leq d_{GH}(X,Y).\]
\end{enumerate}
\end{theorem}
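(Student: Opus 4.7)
The plan is to reduce both parts to the algebraic stability theorem for persistence modules: if two pointwise finite-dimensional functors $M, N \colon \R \to \Vect$ admit a $\delta$-interleaving, then $d_B(\B{}_M, \B{}_N) \leq \delta$. Each of (i) and (ii) then amounts to constructing a suitable interleaving from the geometric data.

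For part (i), set $\delta = d_H(X, Y)$ and work first with the offset filtration. The Hausdorff hypothesis supplies, for every $x \in X$, some $y \in Y$ with $\|x - y\| \leq \delta$, hence $\Ball(x, r) \subset \Ball(y, r + \delta)$ and thus $\Offset(X)_r \subset \Offset(Y)_{r+\delta}$; the symmetric inclusion also holds, and together with the filtration inclusions $\Offset(X)_r \subset \Offset(X)_{r+2\delta}$ these assemble into a $\delta$-interleaving of offset filtrations. Since finite intersections of open Euclidean balls are convex and hence contractible, the persistent nerve theorem cited in the introduction supplies a natural isomorphism $H_i \Offset(X) \cong H_i \Cech(X)$ of persistence modules, and similarly for $Y$, transferring the $\delta$-interleaving to the \v Cech side. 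Algebraic stability then yields the bottleneck bound.

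For part (ii), fix $\eta > d_{GH}(X, Y)$ and choose a correspondence $R \subset X \times Y$ of distortion strictly less than $2\eta$; select functions $f \colon X \to Y$ and $g \colon Y \to X$ with $(x, f(x)), (g(y), y) \in R$ for all $x, y$. The distortion bound gives $\partial_Y(f(x), f(x')) < \partial_X(x, x') + 2\eta$, so if $\partial_X(x, x') < 2r$ then $\partial_Y(f(x), f(x')) < 2(r + \eta)$, making $f$ a simplicial map $\VR(X)_r \to \VR(Y)_{r+\eta}$; likewise for $g$. For any $x \in X$, the pairs $(x, f(x))$ and $(gf(x), f(x))$ both lie in $R$, so the distortion bound forces $\partial_X(x, gf(x)) < 2\eta$. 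Consequently, for any simplex $\sigma \in \VR(X)_r$, all pairwise distances within $\sigma \cup (g \circ f)(\sigma)$ are strictly less than $2(r + 2\eta)$, so this union is a simplex of $\VR(X)_{r+2\eta}$. This is exactly the statement that $g \circ f$ and $\mathrm{id}_X$ are contiguous as maps $\VR(X)_r \to \VR(X)_{r+2\eta}$, hence induce the same map on $H_i$. The analogous statement for $f \circ g$ yields an $\eta$-interleaving of Rips persistence modules, so algebraic stability gives $d_B \leq \eta$; letting $\eta$ descend to $d_{GH}(X,Y)$ completes the proof.

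The main obstacle I anticipate is the contiguity-to-interleaving step in (ii): one must verify that the simplicial maps induced by arbitrary (non-canonical) choices of $f$ and $g$ really do give well-defined morphisms of persistence modules whose composites agree with the shift-by-$2\eta$ structure maps on homology, despite $g \circ f$ not being the identity on the nose. The triangle-inequality bookkeeping showing $\sigma \cup (g\circ f)(\sigma)$ sits inside $\VR(X)_{r+2\eta}$ is the key technical point, and it is precisely this calculation that dictates the normalization of the Rips parameter appearing in the theorem.
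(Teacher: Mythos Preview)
The paper does not actually prove \cref{Thm:Classical_Stability}; it is stated as a known background result with citations to \cite{cohen2007stability,chazal2009gromov,chazal2014persistence}, and no proof appears anywhere in the text. Your proposal is a correct reconstruction of the standard arguments from those references, and the bookkeeping in part (ii) is fine (in particular, the bound on $\partial_X(gf(x),gf(x'))$ follows from applying the distortion inequality twice, once through $f$ and once through $g$).

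It is worth noting, however, that when the paper proves the 2-parameter analogue \cref{Thm:Multicover_and_Subdivision_Cech_Stability}\,(iii), it does \emph{not} use the correspondence/contiguity argument you give for (ii). Instead, following \cite{chazal2009gromov}, it uses the Kuratowski embedding $K\colon Z \to \R^Z$ (with the sup-norm) to isometrically embed both spaces into a common $\ell^\infty$ space, where Rips and \v Cech complexes coincide, thereby reducing the Rips statement to the \v Cech statement already established. Your direct approach via correspondences is the other standard route; it has the advantage of being self-contained and not requiring the persistent nerve theorem, while the embedding approach has the advantage of making the Rips result a formal corollary of the \v Cech result. Both are valid proofs of the classical theorem.
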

While this theorem is a conerstone of persistence theory, it says nothing about \emph{robustness}, i.e., stability with respect to outliers. 
In fact, both \v Cech and Rips persistent homology are notoriously unstable to outliers~\cite[\S
  4]{blumberg2014robust}.
A related issue is that both the \v Cech and Rips constructions have trouble with varying
sampling density, and in particular can be insensitive to topological structure in
high-density regions of the data.  For an illustration of this in the
case of Rips filtrations, see \cite[Figure
  2]{lesnick2015interactive}.

Several strategies have been proposed to address these issues
 within the framework of 1-parameter persistent homology~\cite{carlsson2008local,chazal2009analysis,chazal2013clustering,
  bobrowski2017topological,chazal2011geometric, chazal2018robust,
  phillips2015geometric,blumberg2014robust}; we give an overview in \cref{Sec:Related_Work}.  However, these approaches share certain disadvantages: Unlike the \v Cech and Rips constructions of persistent homology, each requires a choice of one or two parameters; typically, the parameter fixes a spatial scale or a density threshold at which the construction is carried out.  The suitability of a particular choice depends on the data, and a priori, it may not be clear how to make the choice.  In fact, if the data exhibits interesting features at a range of scales or densities, it can be that no single choice of parameter suffices to detect these features.  Moreover, strategies which fix a scale parameter do not distinguish small features in the data from large features, and approaches which fix a density threshold may not distinguish features appearing at high densities from those appearing at low densities.  

All of this suggests that to handle data with outliers or variations in density, it may be advantageous to work with two-parameter analogues of filtrations, called \emph{bifiltrations}, where one of the parameters is a scale parameter, as in the Rips or \v Cech constructions, and the other parameter is a density threshold. 

\subsection{Density-Sensitive Bifiltrations of Point Cloud Data}\label{Sec:Density_Sensitive_Bifiltrations}
 The idea of using multi-parameter filtrations for TDA first appeared in the work of Frosini  and Mullazani, which considered multi-parameter persistent homotopy groups \cite{frosini1999size}, and later, in the work of Carlsson and Zomorodian  \cite{carlsson2009theory}, which introduced multi-parameter persistent homology.  Here, we focus exclusively on the 2-parameter case.

\begin{definition}\label{Def:Bifiltration}
Let $R$ and $S$ be totally ordered sets, and $R\times S$ their product
poset (i.e., $(r_1,r_2)\leq (s_1,s_2)$ if and only if $r_1\leq s_1$ and $r_2\leq
s_2$)).  A \emph{bifiltration} is a functor $\F\colon R\times S\to \Top$ such that if $r\leq s\in R\times S$, then $F_r\subset F_s$ and  $F_{r,s}:F_r\to F_s$ is the inclusion.
\end{definition}

Applying homology to a bifiltration $\F$ by composition yields a functor 
\[
H_i \F\colon R\times S\to \Vect,
\]  
which we call a \emph{bipersistence module}.  A key difficulty with working with multi-parameter
persistent homology is that outside of very special cases, no good
definition of the barcode of a bipersistence module is available;
classical quiver representation theory shows that the natural
analogoue of a barcode for bipersistence modules is an object that is
far too complex to use directly in any data analysis application~\cite{carlsson2009theory}.
Nevertheless, one can define simple invariants of bipersistence modules
 that serve as a surrogate for the barcode; various
proposals for this can be found, e.g.,
in~\cite{carlsson2009theory,lesnick2015theory,cerri2013betti,vipond2018multiparameter,harrington2017stratifying}.

There are several ways to construct a bifiltration $\F$ from metric data in a way that is sensitive to both scale and density.  Given a density function $f\colon X\to [0,\infty)$ on a metric space $X$ (e.g., a kernel or $k$-nearest neighbors density function), \cite{carlsson2009theory} introduced the bifiltration 
\[\begin{aligned}
&\Rips(f,r)\colon (0,\infty)^{\op}\times \R\to \Top,\\
&\Rips(f,r)_{(k,r)}=\Rips(\{x\in X\mid f(x)\geq k\})_r.
\end{aligned}\]
A  \v Cech version of this bifiltration $\Cech(f,r)$ can be defined in the same way.  We will refer to these as \emph{density bifiltrations}.  In general, a density function depends on a choice of bandwidth parameter.  Thus, a density bifiltration depends on a parameter choice.  
This is arguably a disadvantage of the construction.

In this paper, we study density-sensitive bifiltrations built from metric data whose definitions do not involve a parameter choice.  Specifically, we consider the following bifiltrations; see \cref{sec:bifilt} for the formal definitions.
\begin{itemize}
 \item The \emph{multicover} bifiltration $\nMult(X)$ of a set of points $X$ in a metric space $\AmbientSpace$ is a 2-parameter extension of the offset filtration which takes into account the number of times a point in $\AmbientSpace$ is covered by a ball~\cite{sheehy2012multicover,edelsbrunner2018multi,chazal2011geometric}. 
\item Sheehy introduced density-sensitive extensions of the  \v Cech and Rips filtrations, which we call the \emph{subdivision-\v Cech
and subdivision-Rips bifiltrations}, and denote by $\nSCech(-)$ and $\nSRips(-)$ \cite{sheehy2012multicover}.  The definition of these amounts to the observation that there is a natural filtration on the barycentric subdivision of any simplicial complex.  
\item The \emph{degree-Rips} bifiltration $\nDRips(-)$, another density-sensitive extension of the Rips filtration, was introduced by Lesnick and Wright \cite{lesnick2015interactive} and has been studied in several works \cite{mcinnes2017accelerated,jardine2019stable,jardine2020persistent,jardine2019data,jardine2017cluster,rolle2020stable}.  This bifiltration has a natural \v Cech analogue, the \emph{degree-\v Cech} bifiltration $\nDCech(-)$.  The definition of these bifiltrations amounts to the observation that any simplicial complex is naturally filtered by vertex degree.  
\end{itemize}
We discuss the computation of these bifiltrations in \cref{Sec:Computation}.  

\begin{definition}\label{Def:Good_MS}
We say a metric space is \emph{good} if the intersection of any
finite collection of open balls is either empty or contractible.
\end{definition}


According to a 2-parameter extension of Sheehy's multicover nerve theorem \cite{sheehy2012multicover}, 
the multicover bifiltration is in fact topologically equivalent (i.e., weakly equivalent, see \cref{Def:Weakly_Equivalent}) to the
subdivision-\v Cech bifiltration whenever the ambient metric space is good.  See \cref{Thm:Multicover} for the
precise statement of the theorem, and \cref{Sec:Multicover_Nerve} for a proof (following Cavanna, Gardner, and
Sheehy \cite{cavanna17when}) and further discussion.  

\subsection{Our Results}\label{Sec:Our_Results}
In this paper, we present stability results for all of the
parameter-free constructions of bifiltrations mentioned above.  These
results are formulated using metrics on data sets which are robust to
outliers.  As an application of our results, we prove a law of large numbers for the
subdivision-\v Cech bifiltrations of random samples of a metric measure space.

For the multicover bifiltration and the subdivision bifiltrations, we
give 1-Lipschitz stability results closely analogous to the bounds of
\cref{Thm:Classical_Stability}.   
On the other hand, we show that analogous Lipschitz stability results are not possible for the degree
bifiltrations; these bifiltrations are only stable in a weaker sense.

To state our stability results, we need to first specify the distances on
bipersistence modules and on data sets that appear in the statements.
Given the lack of barcodes for bipersistence modules, when
developing stability or inference theory for multi-parameter
persistent homology, a natural approach is to formulate results
directly on the level of persistence modules
\cite{lesnick2015theory}. In fact, we take a stronger approach here, and formulate results directly on
the level of bifiltrations, using the language of \emph{homotopy
  interleavings}~\cite{blumberg2017universality}.  Interleavings are a standard formalism in
TDA for quantifying the similarity between
diagrams of topological spaces or vector spaces.  
Homotopy interleavings are a variant which satisfy a homotopy
invariance property.

Both notions of interleaving give rise to metrics, the \emph{interleaving
distance} $d_I$ and the \emph{homotopy interleaving distance} $d_{HI}$.  By
applying the homology functor, our homotopy interleaving results yield
results on the level of persistence modules as corollaries, given in
terms of (strict) interleavings.

To give a metric on data sets, we regard a data set as a metric
measure space.  Our main results are formulated using the \emph{Prohorov distance} $\Pro$, and the
\emph{Gromov--Prohorov distance} $\GPr$~\cite{greven2009convergence,janson2020gromov}.  The Prohorov distance is
defined between two probability measures on a fixed metric space; it is a classical tool in probability theory, commonly used to metrize
weak convergence.  The Gromov--Prohorov distance is a natural adaptation to measures defined on different metric spaces.  %
The Prohorov distance can be thought of as an analogue of the
Hausdorff distance for measures \cite[\S 27]{Villani2008}.  Our
results demonstrate that this analogy extends to a stability theory
for 2-parameter persistent homology closely paralleling the one for
1-parameter persistence.

\begin{remark}\label{Rem:Wasserstein}
The \emph{Wasserstein distance}~\cite[\S 6]{Villani2008} is another classical distance on probability measures which has played an important role in prior work on robust TDA~\cite{chazal2011geometric, phillips2015geometric,buchet2016efficient}; see \cref{Sec:Related_Work}.  As with the Prohorov distance, we have a variant of the Wasserstein distance for probability measures defined on different metric spaces, the \emph{Gromov--Wasserstein distance} \cite{Memoli11}.  The Prohorov distance admits a simple upper bound in terms of the Wasserstein distance \cite{bjerkevik2020presentation}, which implies an analogous bound relating the Gromov--Prohorov and Gromov--Wasserstein distances; see \cref{Prop:Prohorov_Wasserstein_Bounds}.  Hence, the stability bounds we give in terms of the (Gromov--)Prohorov distance (\cref{Thm:Multicover_and_Subdivision_Cech_Stability,Thm:Degree_Rips_Stab} below) yield corresponding bounds in terms of the (Gromov--)Wasserstein distance. 
\end{remark}

We now turn to the statements of our main results.  

\begin{notation}\label{Not:Counting_Measures}
For $X$ a non-empty finite metric space, let $\mu_X$ denote the
uniform probability measure on $X$, i.e., $\mu_X(A)=|A|/|X|$ for all
sets $A\subset X$.  If $X$ is equipped with an embedding $j\colon X\hookrightarrow \AmbientSpace$ into some ambient metric space $\AmbientSpace$, then let $\nu_X$ denote the pushforward measure $j_*(\mu_X)$, i.e., $\nu_X(A)=|A\cap X|/|X|$ for all Borel sets $A\subset \AmbientSpace$.
\end{notation}

\begin{theorem}\label{Thm:Multicover_and_Subdivision_Cech_Stability}\label{Cor:Degree_Rips_Stab}
\mbox{}
\begin{enumerate}
\item[(i)] For $X$ and $Y$ non-empty, finite subsets of a metric
  space $\AmbientSpace$, \[d_{I}(\nMult(X),\nMult(Y)) \leq \Pro(\nu_X,\nu_Y).\]
\item[(ii)] In (i), if we assume further that $\AmbientSpace$ is good, then also \[d_{HI}(\nSCech(X),\nSCech(Y)) \leq \Pro(\nu_X,\nu_Y).\]
\item[(iii)] For any non-empty, finite metric spaces $X$ and $Y$, 
\[
d_{HI}(\nSRips(X),\nSRips(Y)) \leq \GPr(\mu_X,\mu_Y).
\] 
\end{enumerate}
\end{theorem}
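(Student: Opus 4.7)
I would extract an interleaving of $\nMult(X)$ and $\nMult(Y)$ directly from the Prohorov bound. With the bifiltration indexed by
\[\nMult(X)_{(s,r)} = \{z \in \AmbientSpace : \nu_X(\Ball(z,r)) > s\}\]
(monotone in $r$, antimonotone in $s$), fix $\delta > \Pro(\nu_X,\nu_Y)$. The Prohorov inequality gives $\nu_Y(A^\delta) \geq \nu_X(A) - \delta$ for every Borel $A$, where $A^\delta$ is the open $\delta$-thickening. Applying this to $A = \Ball(z,r)$ and using $A^\delta \subseteq \Ball(z,r+\delta)$ yields $\nu_Y(\Ball(z,r+\delta)) \geq \nu_X(\Ball(z,r)) - \delta$, so $z \in \nMult(X)_{(s,r)}$ forces $z \in \nMult(Y)_{(s-\delta,\,r+\delta)}$. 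The symmetric inclusion is identical, and the two inclusions clearly compose to the identity shifts, producing a $\delta$-interleaving. Sending $\delta \to \Pro(\nu_X,\nu_Y)$ gives the bound.

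\textbf{Plan for (ii).} Granted (i), it suffices to produce zigzags of natural weak equivalences of bifiltrations $\nSCech(X) \simeq \nMult(X)$ and $\nSCech(Y) \simeq \nMult(Y)$, because $d_{HI}$ is invariant under such zigzags and $d_{HI} \leq d_I$. For these equivalences, I would identify $\nSCech(X)_{(s,r)}$ with the nerve of the cover of $\nMult(X)_{(s,r)}$ by the sets $\{\Ball(x,r) \cap \nMult(X)_{(s,r)}\}_{x \in X}$. Under the good-space hypothesis, every nonempty finite intersection among these covering sets is an intersection of open balls in $\AmbientSpace$ further intersected with the open superlevel set of $z \mapsto \nu_X(\Ball(z,r))$, and a careful argument is needed to conclude that such intersections remain contractible. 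A bipersistent version of the persistent nerve theorem—applied bigrade-wise with naturality in both $s$ and $r$, or equivalently established via a homotopy-colimit model—then delivers the zigzag.

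\textbf{Plan for (iii).} The strategy is to reduce (iii) to (ii) by working in a hyperconvex ambient space. Fix $\delta > \GPr(\mu_X,\mu_Y)$; by definition there exist a metric space $W$ and isometric embeddings $\varphi\colon X \hookrightarrow W$, $\psi\colon Y \hookrightarrow W$ with $\Pro(\varphi_*\mu_X, \psi_*\mu_Y) < \delta$. Composing with the Kuratowski embedding $W \hookrightarrow \ell^\infty(W)$ (an isometric embedding), I may assume the ambient space is $\ell^\infty(W)$. This space is good, since open balls are open boxes and finite intersections of convex sets are convex hence contractible. It is moreover hyperconvex, so for any finite $S \subset \ell^\infty(W)$ and any $r$, pairwise intersection of radius-$r$ balls implies total intersection; thus $\Cech(S)_r = \Rips(S)_r$ as simplicial complexes, and this equality is preserved by barycentric subdivision and by the filtration on vertices by density. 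Hence $\nSCech(\varphi(X)) = \nSRips(X)$ and $\nSCech(\psi(Y)) = \nSRips(Y)$ as bifiltrations, and (ii) in $\ell^\infty(W)$ yields
\[d_{HI}(\nSRips(X),\nSRips(Y)) \leq \Pro(\varphi_*\mu_X,\psi_*\mu_Y) < \delta.\]
Letting $\delta \to \GPr(\mu_X,\mu_Y)$ finishes the proof.

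\textbf{Main obstacle.} The hardest step is the bipersistent nerve equivalence in (ii): one must arrange naturality in both the scale and density coordinates simultaneously, so the identification assembles into a zigzag of weak equivalences of \emph{bifiltrations} rather than a pointwise equivalence at each bigrade. The density direction is the delicate one, because changing the density threshold alters the cover itself; verifying that contractibility of the relevant intersections persists under these changes, and that the nerve map is compatible with the induced inclusions, is where I expect the bulk of the technical work to lie.
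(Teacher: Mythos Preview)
Your plans for (i) and (iii) match the paper's proofs: the direct extraction of a $\delta$-interleaving from the Prohorov inequality is exactly how the paper proves (i) (via its stability theorem for measure bifiltrations), and the reduction of (iii) to (ii) via Kuratowski embedding into an $\ell^\infty$ space is the paper's argument verbatim.

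The genuine gap is in (ii). The cover of $\nMult(X)_{(s,r)}$ by the sets $B(x,r)\cap \nMult(X)_{(s,r)}$ is in general \emph{not} good, and its nerve need not be homotopy equivalent to either $\nMult(X)_{(s,r)}$ or $\nSCech(X)_{(s,r)}$. Take $X=\{-1,0,1\}\subset \R$, $r=0.6$, and the density threshold meaning ``covered by at least two balls.'' Then $\nMult(X)_{(s,r)}=(-0.6,-0.4)\cup(0.4,0.6)$ has two components, and so does $\nSCech(X)_{(s,r)}$ (two isolated vertices, one for each edge of $\Cech(X)_r$). But the covering set centered at $0$ is the whole disconnected space, so the cover is not good; and the nerve of your cover is the path on three vertices, which is connected. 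No persistent-nerve argument applied to this cover can produce the right homotopy type.

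The paper instead proves a \emph{multicover nerve theorem}: one covers $\Mult(X)_{(k,r)}$ by the $k$-fold intersections $\bigcap_{i=1}^{k} B(x_i,r)$, indexed by the $(k{-}1)$-simplices of $\Cech(X)_r$. This cover is good under the hypothesis on $\AmbientSpace$, so the ordinary nerve argument gives a weak equivalence with its nerve. That nerve is still not $\SCech(X)_{(k,r)}$, but there is a functor from its face poset to the poset $P^{\geq k}$ of simplices of $\Cech(X)_r$ of dimension at least $k-1$, and a homotopy-finality argument shows this functor induces an equivalence on homotopy colimits; since $\SCech(X)_{(k,r)}$ is the nerve of $P^{\geq k}$, this closes the zigzag. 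So the obstacle you flagged is real, but the repair is not a sharper contractibility check---it requires a different cover plus the finality step.
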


\cref{Thm:Multicover_and_Subdivision_Cech_Stability}\,(ii) and (iii)
are close analogues of \cref{Thm:Classical_Stability}\,(i) and
(ii), respectively, and have closely analogous proofs.  The proof
of \cref{Thm:Multicover_and_Subdivision_Cech_Stability}\,(i), given
in \cref{Sec:_Sbdivision_Stability}, is short and straightforward.  In
fact, we prove a generalization which holds for arbitrary measures on
a common metric space (\cref{Thm:Measure_Stability}).   
\cref{Thm:Multicover_and_Subdivision_Cech_Stability}\,(ii) follows
from \cref{Thm:Multicover_and_Subdivision_Cech_Stability}\,(i) via the  
2-parameter extension of Sheehy's multicover nerve theorem (\cref{Thm:Multicover}).

\cref{Cor:Degree_Rips_Stab}\,(iii) follows
readily from \cref{Cor:Degree_Rips_Stab}\,(ii), using an embedding
argument similar to the one used to prove
\cref{Thm:Classical_Stability}\,(ii) in \cite{chazal2009gromov}.

Our stability results for the degree bifiltrations are formulated in
terms of \emph{generalized interleavings}
\cite{lesnick2015theory,bubenik2015metrics}, where the shift maps are
affine maps rather than the usual translations.  Specifically, for $\delta\geq 0$, we say
that two bifiltrations are $\gamma^\delta$ interleaved if they are
interleaved with respect to the affine map
\[
(x,y) \mapsto (x-\delta,3y+\delta);
\]
see \cref{Sec:Interleavings} for the formal definition.

\begin{theorem}[Stability of degree bifiltrations]
  \label{Thm:Degree_Rips_Stab}  
    \mbox{}
\begin{enumerate}
\item[(i)] If $X$ and $Y$ are non-empty finite subsets of a good metric space, then
  $\nDCech(X)$ and $\nDCech(Y)$ are $\gamma^\delta$-homotopy
  interleaved for all $\delta>\Pro(\nu_X,\nu_Y),$
\item[(ii)] If $X$ and $Y$ are non-empty finite metric spaces, then  $\nDRips(X)$ and $\nDRips(Y)$ are
  $\gamma^\delta$-homotopy interleaved for all $\delta>\GPr(\mu_X,\mu_Y)$.
\end{enumerate}
\end{theorem}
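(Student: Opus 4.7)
The plan is to reduce (ii) to (i) by an isometric embedding argument, and to prove (i) by relating the degree-\v Cech bifiltration to the subdivision-\v Cech bifiltration, for which we already have the Lipschitz stability \cref{Thm:Multicover_and_Subdivision_Cech_Stability}\,(ii). The affine factor of $3$ in $\gamma^\delta$ is not produced by the measure comparison; it should come, purely combinatorially and independently of the data, from a comparison between the degree and subdivision bifiltrations of a single point set.

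For (i), I would first establish a generalized (data-independent) interleaving between $\nDCech(X)$ and $\nSCech(X)$ of the form
\[
\nSCech(X)_{(k,r)}\hookrightarrow \nDCech(X)_{(k,r)},\qquad \nDCech(X)_{(k,r)} \hookrightarrow \nSCech(X)_{(k,3r)},
\]
(or whichever direction matches the paper's density-axis convention). The second, harder inclusion reflects the fact that while degree information only constrains pairwise ball intersections at radius $r$, forcing $k$ such balls into a common nonempty intersection requires enlarging the radius; the factor $3$ is what one obtains from two triangle-inequality steps, analogous to the $\sqrt{2}$/doubling factors in classical \v Cech-vs-Rips comparisons. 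With this intrinsic interleaving in hand, I compose the three homotopy interleavings
\[
\nDCech(X)\ \longleftrightarrow\ \nSCech(X)\ \longleftrightarrow\ \nSCech(Y)\ \longleftrightarrow\ \nDCech(Y),
\]
where the middle step uses \cref{Thm:Multicover_and_Subdivision_Cech_Stability}\,(ii) for any $\delta>\Pro(\nu_X,\nu_Y)$, and the outer steps use the intrinsic degree/subdivision interleaving. Careful bookkeeping of the two parameter shifts (an additive $\delta$ on the density axis from the middle step, and a multiplicative $3$ on the scale axis from the outer steps) then yields a $\gamma^\delta$-homotopy interleaving, using that homotopy interleavings compose.

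For (ii), I would adapt the standard trick used to deduce \cref{Thm:Classical_Stability}\,(ii) from (i) in \cite{chazal2009gromov}. Given $\delta>\GPr(\mu_X,\mu_Y)$, choose isometric embeddings $\varphi\colon X\hookrightarrow Z$ and $\psi\colon Y\hookrightarrow Z$ into a common metric space $Z$ realizing the Prohorov bound between the pushforward measures, and then further embed $Z$ isometrically into $\ell^\infty(Z)$ via the Kuratowski map $z\mapsto d_Z(z,\cdot)$. Since $\ell^\infty$ spaces are good and $\nDRips$ depends only on intrinsic pairwise distances, $\nDRips(X)\cong \nDRips(\varphi(X))$ and similarly for $Y$; applying (i) to the images in $\ell^\infty(Z)$, together with $\Pro(\nu_{\varphi(X)},\nu_{\psi(Y)})<\delta$, gives the claim after taking an infimum over embeddings.

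The main obstacle is the first step: proving the generalized self-interleaving between degree and subdivision bifiltrations with exactly the constants needed so that, when composed with subdivision stability, the shifts assemble into $(x,y)\mapsto(x-\delta,3y+\delta)$ rather than something weaker. Establishing the inclusion $\nDCech(X)_{(k,r)}\hookrightarrow \nSCech(X)_{(k,3r)}$ (or its \v Cech-side analogue) requires showing that whenever a simplex persists in the degree bifiltration at $(k,r)$, each of its vertices participates in enough local ball intersections that, at radius $3r$, the simplex lifts to the $k$-fold covered region witnessed by the subdivision construction. Making this precise, and checking that the bottom direction of the interleaving comes for free from an inclusion, is the technical heart of the argument; once it is in place, the remainder is formal.
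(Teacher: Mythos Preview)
Your overall strategy is exactly the paper's: prove an intrinsic $(\gamma,\Id)$-homotopy interleaving between $\nDCech(X)$ and $\nSCech(X)$ (this is \cref{Prop:Degree_Cech_vs_Subdivision_Cech}), sandwich the subdivision stability bound of \cref{Thm:Multicover_and_Subdivision_Cech_Stability}\,(ii) between two copies of it, and compose via the generalized triangle inequality (\cref{Prop:Triangle_Inequality}); then deduce (ii) from (i) by the Kuratowski embedding into a sup-norm space. Your bookkeeping of the shifts is correct: $\tau^\delta\circ\gamma=\gamma^\delta$.

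The one place where your sketch would not go through as written is the realization of the intrinsic interleaving as literal simplicial inclusions $\nSCech(X)_{(k,r)}\hookrightarrow \nDCech(X)_{(k,r)}$ and $\nDCech(X)_{(k,r)}\hookrightarrow \nSCech(X)_{(k,3r)}$. These complexes have different vertex sets (one lives in the barycentric subdivision), so no such inclusions exist. The paper avoids this by replacing both bifiltrations with weakly equivalent \emph{ambient-space} bifiltrations: $\nSCech(X)\simeq \nMult(X)$ by the multicover nerve theorem (\cref{Thm:Multicover}), and $\nDCech(X)\simeq \DCov(X)$ by the ordinary persistent nerve theorem, where $\DCov(X)_{(k,r)}$ is the union of the $r$-balls around vertices surviving in $\nDCech(X)_{(k,r)}$. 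Between $\DCov(X)$ and $\nMult(X)$ one does have genuine inclusions of subsets of $\AmbientSpace$, and the factor $3$ comes out of a single triangle-inequality step: if $x$ lies within $r$ of a center $p$ having at least $k|X|$ neighbors within distance $2r$, then $x$ lies within $r+2r=3r$ of each of those neighbors, so $x\in\nMult(X)_{(k,3r)}$. (It is not about forcing $k$ balls into a common intersection; the multicover side only asks that $x$ be covered $k$ times.) The reverse inclusion $\nMult(X)_{(k,r)}\subset \DCov(X)_{(k,r)}$ is immediate.
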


To prove \cref{Thm:Degree_Rips_Stab}, we show that there is a simple
linear interleaving between the subdivision and degree filtrations of
a given data set (\cref{Prop:Degree_Cech_vs_Subdivision_Cech}); in
this sense, degree bifiltrations are approximations of subdivision
bifiltrations, in much the same way that Rips filtrations of
point clouds are approximations of \v Cech filtrations.
\cref{Thm:Degree_Rips_Stab} follows from this and the stability of the
subdivision filtrations, using a ``generalized triangle inequality''
for homotopy interleavings.  

We show that \cref{Thm:Degree_Rips_Stab} is tight, in the
sense that if the 
constant $3$ appearing in the definition of $\gamma^{\delta}$ is made
any smaller, then neither statement of the theorem holds; see \cref{Prop:Degree_Tightness}.  We also
observe in \cref{Rmk:Discontinuity} that the degree filtrations are
discontinuous with respect to the relevant metrics on point clouds and bifiltrations.  Thus, \cref{Thm:Degree_Rips_Stab} is far weaker than the corresponding
results for subdivision bifiltrations.

Our final theoretical result, a law of large numbers for subdivision-\v Cech bifiltrations, is an application of the stability theory
developed here: We show that for $\X$ any separable metric probability space,  
the subdivision-\v Cech bifiltration of an i.i.d. sample of $\X$
converges almost surely in the homotopy interleaving distance to a
bifiltration constructed directly from $\X$, as the sample size tends
to $\infty$.  \cref{Thm:Cech_Convergence} gives the precise statement.

\begin{remark}
\cref{Thm:Multicover_and_Subdivision_Cech_Stability,Thm:Degree_Rips_Stab} also admit ``unnormalized'' variants which make no finiteness
assumptions on the data; see
\cref{Rem:Unnormalized_Results}. 
\end{remark}

\begin{remark}
In this paper, we do not address the stability of the density bifiltrations $\Rips(f,r)$ and $\Cech(f,r)$ defined above.  We expect that the approach to stability considered here can be adapted to yield results about density bifiltrations.  
\end{remark}

\subsection{Computational Study of the Stability of Degree-Rips Bifiltrations}
As discussed in \cref{Sec:Computation} below, degree-Rips bifiltrations are known to be computable in practice.  In work to be reported elsewhere, we have found them to be a very convenient tool for studying cluster structure in genomic data.  Thus, their stability with respect to outliers is of particular interest to us.  

To get a sense of what our stability result for degree bifiltrations (\cref{Thm:Degree_Rips_Stab}) may mean for practical data analysis, in \cref{Sec:Examples} we use the 2-parameter persistence
software RIVET \cite{lesnick2015interactive,lesnick2019computing,rivet} to study the degree-Rips bifiltrations of three synthetic data sets, denoted $X$, $Y$, and $Z$.  $X$ is a uniform sample of 475 points from an annulus; $Y$ is obtained from $X$ by adding 25 points sampled uniformly at random from the disc bounded by the annulus; and $Z$ is a uniform sample of 500 points from a disk.  

We find that in spite of the strong topological signal shared by $X$ and $Y$, \cref{Thm:Degree_Rips_Stab} yields a trivial constraint between the $1^{\mathrm{st}}$ persistent homology modules of the degree-Rips bifiltrations of these two data sets, and thus offers no information about the similarity of the two modules (\cref{Rem:Symmetric_Version_No_Good}).  This suggests that \cref{Thm:Degree_Rips_Stab} may be of limited use in the analysis of real-world data with outliers, despite being tight in the sense explained above.
However, we show that when one data set is a subset of the other, \cref{Thm:Degree_Rips_Stab} admits an asymmetric variant giving substantially tighter bounds (\cref{Prop:Degree_Subspaces}), and this result does constrain the relationship between the persistent homology modules of $X$ and $Y$ (\cref{Sec:Stability_Analysis}).  This demonstrates that our theory does provide nontrivial information about the robustness of degree bifiltrations.

Most interestingly, the homology modules of $X$ and $Y$ exhibit evident structural similarities that
are not explained by the available stability theory (\cref{Rem:Stronger_Stability}).  
This suggests that in future work, one should seek a more refined stability theory for degree
bifiltrations that can explain these similarities.  For this, it might be fruitful to use the framework of homology inference, as considered in \cite{oudot2015zigzag}.

\subsection{Computing the Multicover, Subdivision, and Degree bifiltrations}\label{Sec:Computation}
For practical applications of the bifiltrations studied in this paper, computability is a critical consideration.  We now discuss what is known about computing these bifiltrations.

While interesting theoretically, the subdivision bifiltrations $\nSCech(X)$ and $\nSRips(X)$ of a data set $X$ are usually too large to be directly computed in practice; the largest simplicial complex in each of these bifiltrations has exponentially many vertices in the size of $X$.

The degree bifiltrations are much smaller than the corresponding subdivision bifiltrations: For fixed $k$, the $k$-dimensional skeleton of the degree-Rips bifiltration of a metric space $X$ has size $O(|X|^{k+2})$.  Moreover, if the bifiltration is coarsened to lie on a grid of constant size, then the $k$-skeleton has size $O(|X|^{k+1})$, which agrees (asymptotically) with that of the ordinary Rips filtration.  Using a line sweep algorithm designed by Roy Zhao  \cite{lesnick2021computing} and implemented in RIVET, these low-dimensional skeleta are readily computed in practice for data sets with hundreds of points.

Though the subdivision-\v Cech bifiltration is too large to compute directly, recent work offers hope that practical
computations of multicover persistent homology may be within reach for data lying in a low-dimensional Euclidean space: It is shown in~\cite{corbet2021computing} that for $X\subset \R^n$ finite, a polyhedral bifiltration in $\R^{n+1}$ called the \emph{rhomboid bifiltration}, introduced by Edeslbrunner and Osang \cite{edelsbrunner2021multi}, is weakly equivalent to a version the multicover bifiltration of $X$ \cite{corbet2021computing}.  For fixed $n$, the rhomboid bifiltration has size $O(|X|^{n+1})$.   A recent algorithm of Edelsbrunner and Osang \cite{EdOs20} computes the rhomboid bfiltration.  The algorithm and its complexity analysis depend on a choice of algorithm for computing weighted Delaunay bifiltrations.  For a suitable such choice, the algorithm computes the rhomboid bifiltration of $X\subset \R^3$ in time $O(|X|^5)$; see~\cite[\S 4.5]{corbet2021computing}.  The algorithm has been implemented \cite{osang2020rhomboid}, and scales well enough to compute the full rhomboid bifiltration of at least 200 points in $\R^3$; examples and timing results appear in \cite{corbet2021computing} and \cite{EdOs20}.  

While the rhomboid bifiltration will not be discussed elsewhere in this paper, our main results about multicover and subdivision-\v Cech filtrations also hold for rhomboid bifiltrations, since these results are formulated in a homotopy-invariant way.

No efficient algorithms are known for computing or approximating (up to homotopy) either the multicover bifiltration of high-dimensional data or the subdivision-Rips bifiltration.  The development of such algorithms seems to be an important direction for future research.   

\subsection{Other Related Work}\label{Sec:Related_Work}

As noted in \cref{Sec:Intro_Stability}, several strategies have been proposed to address issues of
robustness and density within the framework of 1-parameter persistent homology; we now give an overview.  Early work
of Carlsson et al.\ on the application of persistent homology to
natural scene statistics \cite{carlsson2008local} dealt with outliers by explicitly removing
them.  Specifically, the authors considered a
density function $f\colon X\to (0,\infty)$ on the data $X$ and cleaned the
data by removing points whose density value was below some fixed threshold $t>0$.  The persistent homology of data preprocessed in this way is easily seen to be unstable with respect to perturbations of the data, the density threshold $t$, and the bandwidth parameter.  
  
An alternative approach is to work with the 1-parameter filtrations obtained by fixing the scale parameter in a density bifiltration, while allowing the density threshold to vary.  Chazal et al.\ \cite{chazal2009analysis,chazal2013clustering} considered the use of such filtrations in the topological analysis of density functions and in clustering.  
It follows from the results of this work that these filtrations can be used to consistently 
  estimate the superlevel persistent homology of a probability density
  function, provided the scale parameter and bandwidth of the density function are
  chosen in the right way; see also \cite[Chapter
    4]{lesnick2012multidimensional}.  Subsequent work of Bobrowski, Mukherjee, and Taylor ~\cite{bobrowski2017topological} revisited this estimation problem, using
  different estimators. 

For a finite set of points in some ambient metric space, Chazal, Cohen-Steiner, and M\' erigot introduced a novel variant of the offset filtration, the \emph{distance-to-measure} filtration, which is robust to outliers~\cite{chazal2011geometric,
  chazal2018robust,buchet2016efficient}.
More specifically, its persistent homology is stable with
respect to the 2-Wasserstein distance on the 
data and the bottleneck distance on barcodes.  The distance-to-measure filtration depends on a parameter which, roughly
speaking, controls the scale on which smoothing is performed in the
construction.  For sufficiently small values of this parameter, the distance-to-measure filtration is equal to the offset filtration.

For Euclidean data, Phillips, Wang, and Zheng~\cite{phillips2015geometric} introduced an alternative construction, the
\emph{kernel distance filtration}, which exhibits similar robustness
properties.  This filtration is
determined by a kernel density function, and so depends on a choice of
bandwidth parameter.  Subsequently,~\cite{bobrowski2017topological} directly studied the superlevel persistent homology of a
kernel density function and its
statistical properties.

Blumberg et al.~\cite{blumberg2014robust} studied distributions of barcodes induced by samples of a fixed size drawn from metric measure
spaces.  The authors proved that their invariants satisfy a stability result with respect to the Gromov--Prohorov
distance on metric measure spaces and the Prohorov distance on
distributions.  This implies that the invariants are robust to outliers.  These invariants depend on a choice of 
fixed size for the subsamples, which reflects the local feature scale
of the data.

In the multiparameter setting, Rolle and Scoccola \cite{rolle2020stable}, \cite[\S 6.5]{scoccola2020locally} have recently shown that if
instead of using the Prohorov distance to formulate
\cref{Thm:Degree_Rips_Stab}, one uses the maximum of the Prohorov and
Hausdorff distances (the Prohorov-Hausdorff metric), then the bounds
of \cref{Thm:Degree_Rips_Stab} strengthen to Lipschitz bounds.  In fact, their result is more general, concerning 3-parameter Rips filtrations associated to metric probability spaces.  
Rolle and Scoccola apply this result to prove the stability and consistency of a clustering scheme.  
However, the Prohorov-Hausdorff metric used to formulate these stability results is not robust to outliers.  

In the case where one data set is a subset of the other, recent work of Jardine \cite{jardine2020persistent} presents another approach to the stability of degree-Rips bifiltrations.  This approach yields a stability theorem phrased in terms of homotopy commutative
interleavings (a weaker notion than the homotopy coherent
interleavings used here and in~\cite{rolle2020stable}) and a metric on
data sets which is bounded below by the Prohorov-Hausdorff metric.

\subsection{Outline}
The paper is organized as follows.  In~\cref{sec:background},
we give a concise review of necessary background on metric measure
spaces and 2-parameter persistence.  In~\cref{sec:theorems}, we
prove the main stability results.  Some of these depend on the multicover
nerve theorem for bifiltrations, which we prove in~\cref{Sec:Multicover_Nerve}, following~\cite{cavanna17when}.  \cref{Sec:RIVET_Point_Clouds} presents our computational study of the stability of degree-Rips bifiltrations.  \cref{sec:tightness} gives the proof of \cref{Prop:Degree_Tightness}, which says that our stability result for degree bifiltrations (\cref{Thm:Degree_Rips_Stab}) is tight.

\subsection*{Acknowledgements}

We thank Mike Mandell for his insights on various aspects of robustness related to
this paper; Ren\' e Corbet, Alex Rolle, and Don Sheehy for helpful conversations about
the multicover nerve theorem; H\aa vard Bjerkevik for valuable 
discussions about the Wasserstein stability of 2-parameter
persistence; Alex Tchernev for pointing out an error in \cref{Sec:Interleavings} of the first version of the paper; and the anonymous reviewers for many helpful  suggestions.

The computations and figures of
\cref{Sec:RIVET_Point_Clouds} would not have been possible without the
work of Matthew Wright, Bryn Keller, Roy Zhao, and Simon Segert on the
RIVET software.  In particular, Zhao designed and implemented RIVET's
algorithm for computing degree-Rips bifiltrations, and Segert made
critical improvements to RIVET's visualization capabilities. 

The
development of RIVET was supported in part by NSF grant DMS-1606967.
Blumberg was partially supported by NIH grants 5U54CA193313 and
GG010211-R01-HIV, AFOSR grant FA9550-18-1-0415, and NSF grant CNS
1514422. 

\section{Background}\label{sec:background}

In this section, we review the definitions and concepts used in the
rest of the paper.

\subsection{Rips and \v Cech complexes}
We begin with a remark on the definitions of \v Cech and Rips
complexes given in \cref{Sec:Intro_PH}.  Let us recall these: 

\begin{definition}\label{Def:Cech}
Given a subset $X$ of a metric space $\AmbientSpace$ and $r>0$, the \emph{\v
Cech complex} $\Cech(X)_r$ is the  
nerve of the collection of open balls $\{\Ball(x,r)\mid x\in X\}$ in $\AmbientSpace$.
\end{definition}

\begin{definition}\label{Def:Rips}
Given a metric space $(X, \partial_X)$ and $r>0$, the \emph{Rips
complex} $\VR(X)_r$ is the clique complex on the graph
with vertex set $X$ and edge set $\{[x,y]\mid x\ne y,\ \partial_X(x,y)<2r\}.$
\end{definition}

\begin{remark}\label{Rem:Different_Def_Of_Rips_Cech}
These definitions are the ones used
in~\cite{chazal2009gromov}.  Many references use 
a slightly different pair of definitions, where ``open'' is replaced with ``closed'' in \cref{Def:Cech}, and 
the strict inequality $<$ is replaced with $\leq$
in \cref{Def:Rips};  we call the definitions we have given the \emph{open convention}, and the alternative definitions the \emph{closed convention}.  
All stability results for Rips and \v Cech
(bi)filtrations appearing in this paper hold exactly as written if we instead use the closed convention. 

The closed convention is arguably more natural, since it yields finitely presented persistence modules and is more convenient when studying rhomboid bifiltrations.  
However, the open convention is convenient for interfacing with the multicover nerve theorem, which (in the formulation we consider) concerns open covers.  At the time this paper was written, no version of the multicover nerve theorem was available for closed covers.  But using ideas from \cite{bauer2022unified}, which was released after our paper was finished, it is possible to establish the multicover nerve theorem for a large class of closed covers; see \cref{Rem:Multicover_Closed}.  Had \cite{bauer2022unified} been written before our paper was finished, we might have chosen to work with the closed convention throughout.    

We also note that in some references, the constant 2 in our definition of the edge set of $\VR(X)$ is replaced with 1.   Our convention ensures that $\VR(X)_r$ is the clique complex of the 1-skeleton of $\Cech(X)_r$.
\end{remark}

\subsection{Metric Measure Spaces}\label{Sec:Metric_Measure_Spaces}

The following definition allows us to formalize the notion of sampling from a metric space.

\begin{definition}\label{Def:MM_Space}
A \emph{metric measure space} is a triple $(\X,\partial_\X,\eta_\X)$, where
$(\X,\partial_\X)$ is a metric space and $\eta_\X$ is a measure on its
Borel $\sigma$-algebra.  We will sometimes abuse notation slightly and
denote such a triple as either $\X$ or $\eta_\X$.
\end{definition}

We will often work with \emph{metric probability spaces}, i.e., metric
measure spaces where $\eta_\X(\X) = 1$.  Of course, any metric measure
space with finite measure can be normalized by dividing the measure of
each subset by $\eta_\X(\X)$, yielding a metric probability space.

\subsection{Bifiltrations from Data}\label{sec:bifilt}

Recall that in \cref{Def:Bifiltration}, we have defined a bifiltration to be a functor 
$F\colon R\times S\to \Top$ for some totally ordered sets $R$ and $S$,
such that $F_{a}\subset F_{b}$ whenever $a\leq b\in R\times S$.  For
$P$ a poset, let $P^{\op}$ denote the opposite poset.  In this paper,
we will always take $R=(0,\infty)^{\op}$ and $S=(0,\infty)$, except in
the computational example of \cref{Sec:RIVET_Point_Clouds}.  We let
$\OurPoset=(0,\infty)^{\op}\times (0,\infty)$. 

\begin{remark}
It is common in the literature on multiparameter persistence to see bifiltrations indexed by other choices of $R\times S$, e.g., by $\R^2$ or $\mathbb{N}^2$.  However, for the constructions considered in this paper, $\OurPoset$ is a natural choice of indexing poset.
\end{remark}

We now define the bifiltrations that we will study in this paper.  First, we consider a bifiltration associated to an arbitrary metric measure space.
A variant of the following definition was considered in \cite{chazal2011geometric}.
\begin{definition}\label{Def:Measure_Bifiltration}
For  $\X$ a metric measure space, we define the \emph{measure bifiltration} of $\X$ to be the bifiltration 
\begin{align*}
&\MB(\X)\colon(0,\infty)^{\op}\times (0,\infty) \to \Top,\\
&\MB(\X)_{(k,r)}=\{y\in \X\mid \eta_\X(B(y,r))\geq k\}.
\end{align*}
That is, $y$ is contained in $\MB(\X)_{(k,r)}$ if and only the open ball of
radius $r$ centered at $y$ has measure at least $k$.
\end{definition}

We will be particularly interested in the following special case:

\begin{definition}\label{Def:Multicover}
For $X$ a subset of a metric space
$(\AmbientSpace,\partial_{\AmbientSpace})$, let \[\Mult(X)=\MB(\tilde
\nu_X),\] where $\tilde \nu_X$ is the \emph{counting measure of $X$},
i.e., the measure on $\AmbientSpace$ is given by $\tilde
\nu_X(A)=|A\cap X|$.  Thus,  
\[\Mult(X)_{(k,r)}=\{y\in \AmbientSpace\mid
\partial_{\AmbientSpace}(y,x)< r \textup{ for least $k$ distinct
  points $x\in X$}\}.\]  
  We call $\Mult(X)$ the \emph{(unnormalized) multicover bifiltration} of $X$.
\end{definition}

\begin{remark}
Note that in our our notation, the 
dependence of $\Mult(X)$ on the ambient metric space $\AmbientSpace$ is
implicit.   The superscript $\mathrm{u}$ in the notation is intended to denote that this bifiltration is unnormalized; we will define a normalized version below, which we denote without the $\mathrm{u}$.  We will also use the same convention to denote the unnormalized and normalized versions of the other bifiltrations we consider.
\end{remark}

\begin{remark}
The definition of the multicover bifiltration given above is the one 
used by Sheehy~\cite{sheehy2012multicover}.  Edelsbrunner and Osang~\cite{edelsbrunner2018multi} consider a variant of the
definition using closed rather than open balls; the two variants 
have interleaving distance 0.
\end{remark}

Next, we define the subdivision bifiltrations of Sheehy.  First, we
review the definition of the barycentric subdivision of an (abstract)
simplicial complex.  Given a simplicial complex $S$, a \emph{flag} in
$S$ is a sequence of simplices in $S$ 
\[
\sigma_1\subset \sigma_2\subset \cdots \subset \sigma_m,
\]
where each containment is strict.

\begin{definition}\label{defn:barycentric}
The \emph{barycentric subdivision}
of a simplicial complex $T$, which we denote $\Bary(T)$, is the simplicial complex whose $k$-simplices are the flags of length $k+1$, with the face relation
defined by removal of simplices from a flag. 
\end{definition}

Let $\Simp$ denote the category of simplicial complexes.  There is a
natural filtration on $\Bary(T)$ by dimension, which is functorial on
inclusions: 

\begin{definition}\label{Def:Subdivision_Cech_Rips}\label{defn:sub-filt} \mbox{}
\begin{enumerate}[(i)]
\item For $T$ a simplicial complex, the \emph{subdivision filtration} \cite{sheehy2012multicover}
\[
\Sd(T)\colon (0,\infty)^{\op}\to \Top
\]
is defined by taking  $\Sd(T)_k\subset \Bary(T)$ to be the set of flags
whose minimum element has dimension at least $k-1$.
\item It is easy to check that if $\F:\C\to \Simp$ is a functor whose internal maps are monomorphisms (i.e., injections on vertex sets), then the filtrations
$\{\Sd(\F_r)\}_{r\in (0,\infty)}$ assemble into a
  functor \[\Sd(\F)\colon (0,\infty)^{\op}\times \C\to \Simp.\]
For $X$ a subset of a metric space $\AmbientSpace$, we call $\Sd(\Cech(X))$ the \emph{subdivision-\v Cech}
bifiltration and denote it $\SCech(X)$.
Similarly, for $X$ a metric space, we call $\Sd(\Rips(X))$ the
\emph{subdivision-Rips} bifiltration, and denote it $\SRips(X)$.
\end{enumerate}
\end{definition}

As noted in \cref{sec:intro}, the subdivision bifiltrations are too
big to construct in practice.  This motivates the consideration of smaller 
density-sensitive simplicial bifiltrations from point cloud
data.  The following generalizes a construction introduced in~\cite{lesnick2015interactive}:

\begin{definition}\mbox{}
\begin{enumerate}[(i)]
\item For $T$ a simplicial complex, the \emph{degree filtration}
\[
\Deg(T)\colon (0,\infty)^{\op}\to \Simp
\]
is defined by taking $\Deg(T)_k$ to be the maximum subcomplex of $T$ whose vertices have degree at least $k-1$ in the 1-skeleton of $T$.  
\item If $\F:\C\to \Simp$ is any functor whose internal maps are monomorphisms, then the filtrations
$\{\Deg(\F_r)\}_{r\in (0,\infty)}$ assemble into a
  functor \[\Deg(\F)\colon (0,\infty)^{\op}\times \C\to \Simp.\]
For $X$ a subset of a metric space $\AmbientSpace$, we call $\Deg(\Cech(X))$ the \emph{degree-\v Cech}
bifiltration and denote it $\DCech(X)$.  For $X$ a metric space, we call $\Deg(\Rips(X))$ the
\emph{degree-Rips} bifiltration, and denote it $\DRips(X)$.
\end{enumerate}
\end{definition}

In the case of non-empty, finite data sets, the multicover,
subdivision, and degree bifiltrations each admit a variant where the
first parameter of the bifiltration is normalized by the number of
points in the data set.  For example: 

\begin{definition}[Normalized Bifiltrations]\label{Def:Norm-Multicover}
For $X$ a non-empty finite subset of a metric space $\AmbientSpace$, define the \emph{(normalized) multicover bifiltration} of $X$ to  be the bifiltration $\nMult(X)$ given by \[\nMult(X)_{(k,r)}=\Mult(X)_{(k|X|,r)}.\]
Equivalently, $\nMult(X)=\MB(\nu_X)$, where as in \cref{Not:Counting_Measures}, $\nu_X$ is the measure on $\AmbientSpace$ is given by
\[\nu_X(A)=|A\cap X|/|X|.\] 
We define normalized variants of the subdivision and degree bifiltrations analogously, and also denote them 
by removing the $\mathrm{u}$.  For example, the normalized subdivision-Rips
filtration of $X$ is denoted $\nSRips(X)$. 
\end{definition}

\subsection{Distances on Metric Measure Spaces}\label{Sec:Distances_on_Measures}
There are many ways to define a distance between probability measures on
a fixed metric space~\cite{Su2002}.  We will focus on two standard
choices, the Prohorov and Wasserstein distances.
To adapt these to distances between measures defined on different
metric spaces, we use Gromov's idea of minimizing over isometric
embeddings into a larger ambient space. 

\begin{definition}The \emph{Prohorov distance} (also known as the Prokhorov distance)
between measures $\mu$ and $\eta$ on a metric space $( \AmbientSpace,\partial_{ \AmbientSpace})$ is given by 
\[
\begin{split}
\Pro(\mu,\eta) = \sup_A \inf\{\delta \geq 0 \mid\ & \mu(A)
\leq \eta(A^\delta) + \delta \textup{ and } \\ 
&\eta(A) \leq \mu(A^\delta) + \delta\},
\end{split}
\] 
where $A \subset  \AmbientSpace$ ranges over all closed sets and 
\[A^\delta=\{y\in  \AmbientSpace\mid \partial_{\AmbientSpace}(y,a) < \delta\textup{ for some }a \in A.\}\]
\end{definition}

\begin{definition}[\cite{greven2009convergence}]
The \emph{Gromov--Prohorov distance} between metric measure spaces $\X=(\X,\partial_\X,\eta_\X)$ and $\Y=(\Y,\partial_\Y,\eta_\Y)$ is
\[
\GPr(\X,\Y) = \inf_{\varphi, \psi} \Pro(\varphi_*(\eta_\X), \psi_*(\eta_\Y)),
\]
where $\varphi \colon \X \to Z$ and $\psi \colon \Y \to Z$ range over all
isometric embeddings into a common metric space $Z$. 
\end{definition}

\begin{remark}[Robustness]\label{Rem:Outliers}
It is easy to see that if $Y$ is a finite metric space and $X\subset Y$ is nonempty, then the
uniform probability measures $\mu_X$ and $\mu_Y$ of $X$ and $Y$ satisfy
\[\GPr(\mu_X,\mu_Y)\leq\Pro(\mu_X,\mu_Y)\leq \frac{|Y\setminus X|}{|X|}.\]
In this sense, $\Pro$ and $\GPr$ are robust to outliers.
\end{remark}

A \emph{coupling} between metric measure spaces $\X$ and $\Y$ is a measure $\mu$ on $\X \times \Y$ such that $\mu(A \times \Y) = \eta_\X(A)$ and $\mu(\X \times B) = \eta_\Y(B)$ for all Borel sets $A \subseteq
\X$ and $B \subseteq \Y$. 

 For the next definition, recall that a metric space is said to be \emph{Polish}
if it is separable (equivalently, second-countable) and complete.

\begin{definition}
For $p\in [1,\infty)$, the \emph{$p$-Wasserstein distance} between probability measures
$\mu_1$ and $\mu_2$ on a Polish metric space $( \AmbientSpace, \partial_{ \AmbientSpace})$ is    
\[
\Wa^p(\mu_1, \mu_2) = \inf_{\mu} \left(\int_{ \AmbientSpace \times  \AmbientSpace}
\partial_{ \AmbientSpace}(y,z)^p\, d\mu \right)^{\frac{1}{p}},
\]
where $\mu$ ranges over all couplings of $\mu_1$ and $\mu_2$.
\end{definition}

\begin{definition}
Let $\X$ and $\Y$ be Polish metric probability spaces.  For $p\in [1,\infty)$, the \emph{$p$-Gromov--Wasserstein distance} between $\X$ and $\Y$ is 
\[
\GW^p(\X,\Y)=\inf_{\varphi,\psi} \Wa^p(\varphi_*(\eta_\X), \psi_*(\eta_\Y)),
\]
where $\varphi \colon \X \to Z$ and $\psi \colon \Y \to Z$ range over all
isometric embeddings into a common metric space $Z$. 
\end{definition}

\subsubsection{Metric Properties}
We next consider the question of when each of the distances on measures we have defined above yields a metric.  We begin with some definitions.  

\begin{definition}\mbox{}
The \emph{support} of a metric measure space $\X=(\X,\partial_\X,\eta_\X)$, denoted $\Supp(\X)$, is the complement of the set 
\[\bigcup\, \{V \subset \X\mid V\textup{ is open, }\eta_X(V)=0\}.\]  
$\Supp(\X)$ inherits the structure of a metric measure space from $\X$.  
\end{definition}

If $\X$ is separable, then we have that \[\eta_X(\X\setminus \Supp(\X))=0,\]
as one would want~\cite[\S 12.3]{aliprantis2006infinite}.  Hence, we assume separablility in the following definition.

\begin{definition}\label{Def:Measure_Iso}
An \emph{isomorphism} of separable metric measure spaces is a measure-preserving
isometry between their supports.  
\end{definition}

\begin{definition}
For $\eta$ a measure on a Polish space $( \AmbientSpace, \partial_ \AmbientSpace)$, we say $\eta$ has \emph{finite $p^{\text{th}}$ moment} if for any (hence all) $z_0\in  \AmbientSpace,$  
\[\int_{ \AmbientSpace} \partial_{ \AmbientSpace}(z,z_0)^p\, d\eta<\infty.\]
\end{definition}

We now record some standard facts about the metric properties of the distances we have defined above.

\begin{proposition}[Metric Properties of Distances on Measures]\label{Rem:Metric_Properties_Of_Distances_On_Measures}\mbox{}
\begin{enumerate}[(i)]
\item $\Pro$ is an extended metric on the set of all measures on a fixed metric space; that is, $\Pro$ can take infinite values, but otherwise satisfies all the properties of a metric.  Moreover, $\Pro$ restricts to a metric on finite measures\textup{~\cite[\S A.2.5]{daley2003}}.  
\item  The Gromov--Prohorov distance is a pseudometric on the class of Polish metric probability spaces, and $\GPr(\X,\Y)=0$ if and only if $\X$ and $\Y$ are isomorphic in the sense of \cref{Def:Measure_Iso}.  Thus, $\GPr$ descends to a metric on isomorphism classes of Polish metric probability spaces~\textup{\cite{greven2009convergence,janson2020gromov}}.
\item  For any $p\in [1,\infty)$, $\Wa^p$ is an extended metric on the set of all probability measures on a fixed Polish space, and it restricts to a metric on measures with finite $p^\mathrm{th}$ moment \textup{\cite[\S 6]{Villani2008}}.
\item For any $p\in [1,\infty)$, $\GW^p$ is a pseudometric on the class of Polish metric probability spaces with finite $p^{\mathrm{th}}$ moment, and $\GW^p(\X,\Y)=0$ if and only if $\X$ and $\Y$ are isomorphic.  Thus, $\GW^p$ descends to a metric on isomorphism classes of Polish metric probability spaces with finite $p^{\mathrm{th}}$ moment \textup{\cite{Sturm2006}}.
\end{enumerate}
\end{proposition}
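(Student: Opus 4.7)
The proof plan is to treat each of the four parts as a standard result from the cited literature, and to indicate the unifying strategy used in all four arguments. Since the paper is citing \cite{daley2003, greven2009convergence, janson2020gromov, Villani2008, Sturm2006} for each item, my main task is to give a coherent sketch that recalls the key ideas so that the reader sees the results are not being used as a black box.

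For parts (i) and (iii), I would work in a fixed Polish space $(\AmbientSpace,\partial_\AmbientSpace)$ and verify the metric axioms directly. Symmetry of $\Pro$ is immediate because both conditions $\mu(A)\leq \eta(A^\delta)+\delta$ and $\eta(A)\leq \mu(A^\delta)+\delta$ appear in its definition. The triangle inequality rests on the elementary containment $(A^\alpha)^\beta\subseteq A^{\alpha+\beta}$: if $\Pro(\mu,\eta)<\alpha$ and $\Pro(\eta,\rho)<\beta$ then for any closed $A$ we get $\mu(A)\leq \eta(A^\alpha)+\alpha\leq \rho((A^\alpha)^\beta)+\alpha+\beta\leq \rho(A^{\alpha+\beta})+(\alpha+\beta)$, and symmetrically. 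Non-degeneracy reduces to the standard fact that a finite Borel measure on a metric space is determined by its values on closed sets (outer regularity). Finiteness of $\Pro$ on finite measures follows by taking $\delta\geq \max(\mu(\AmbientSpace),\eta(\AmbientSpace))$. The analogous properties of $\Wa^p$ follow from the classical gluing lemma for couplings: given couplings $\pi_{12}$ of $(\mu_1,\mu_2)$ and $\pi_{23}$ of $(\mu_2,\mu_3)$, one constructs a measure on $\AmbientSpace^3$ with the correct marginals and applies the triangle inequality inside the $L^p$ norm on $(\AmbientSpace^3,\partial_\AmbientSpace)$. Finiteness requires the $p^{\mathrm{th}}$ moment hypothesis via the bound $\partial_\AmbientSpace(y,z)\leq \partial_\AmbientSpace(y,z_0)+\partial_\AmbientSpace(z_0,z)$ under the integral.

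For parts (ii) and (iv), the central tool is Gromov's amalgamation trick: given isometric embeddings $\X\hookrightarrow Z$, $\Y\hookrightarrow Z$ and $\Y\hookrightarrow Z'$, $\W\hookrightarrow Z'$, one forms the pushout metric space gluing $Z$ and $Z'$ along the common image of $\Y$, which carries isometric embeddings of all three spaces. Applying the fixed-ambient triangle inequality from (i) or (iii) inside this amalgam and then passing to the infimum yields the triangle inequality for $\GPr$ or $\GW^p$. Symmetry and non-negativity are immediate, and reflexivity is obtained by taking $Z = \X$ with the identity embedding.

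The main obstacle in both (ii) and (iv) is the non-degeneracy clause: showing that $\GPr(\X,\Y)=0$ (respectively $\GW^p(\X,\Y)=0$) implies an isomorphism of metric measure spaces in the sense of \cref{Def:Measure_Iso}. Here the plan is to extract, for a sequence $\delta_n\to 0$, isometric embeddings $\varphi_n,\psi_n$ into common spaces $Z_n$ witnessing $\Pro(\varphi_{n*}\eta_\X,\psi_{n*}\eta_\Y)<\delta_n$, then compactify and pass to a limit. The argument in \cite{greven2009convergence} proceeds via the characterization of $\GPr$ in terms of the ``distance matrix distribution'' on $\Supp(\X)^{\Nbb}$ (and analogously $\GW^p$ in \cite{Sturm2006}), which reduces the question to uniqueness of a distribution and thereby produces a measure-preserving isometry of supports. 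I would invoke this characterization rather than reproduce it, since both are established in the references cited in the statement.
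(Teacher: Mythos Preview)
Your sketch is correct and considerably more detailed than what the paper itself provides. The paper does not give a proof at all: it treats all four items as standard facts established in the cited references, and the only additional comment is the one-line remark immediately following the proposition that the proof of (iv) in \cite{Sturm2006} is stated only for $p=2$, but that the same argument works for all $p\in[1,\infty)$. So your plan to recall the key ideas (triangle inequality via $(A^\alpha)^\beta\subseteq A^{\alpha+\beta}$ for $\Pro$, the gluing lemma for $\Wa^p$, Gromov amalgamation for the Gromov-type distances, and the distance-matrix-distribution characterization for non-degeneracy) goes well beyond what the paper does, and is a genuinely different approach only in the sense that you are unpacking the black boxes the paper is content to cite. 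If you want to match the paper exactly, you could simply cite the references and add the remark about Sturm's $p=2$ proof extending to general $p$; conversely, your expanded sketch would be a strict improvement in exposition, though you should then also flag the $p=2$ issue in (iv) explicitly.
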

The proof of \cref{Rem:Metric_Properties_Of_Distances_On_Measures}\,(iv) appears in \cite{Sturm2006} only for the case $p=2$, but the same proof works for all $p$.

\subsubsection{Comparison of the Prohorov and Wasserstein Distances}
\begin{proposition}[\cite{bjerkevik2020presentation}]\label{Prop:Prohorov_Wasserstein_Bounds}\mbox{}
\begin{enumerate}[(i)]
\item For any probability measures $\mu$ and $\eta$ on a common Polish space,
\[\Pro(\mu,\eta)\leq \min\left(\Wa^p(\mu,\eta)^{\frac{1}{2}},\Wa^p(\mu,\eta)^{\frac{p}{p+1}}\right).\]
\item For any Polish metric probability spaces $\mu$ and $\eta$,
\[\GPr(\mu,\eta)\leq \min\left(\GW^p(\mu,\eta)^{\frac{1}{2}},\GW^p(\mu,\eta)^{\frac{p}{p+1}}\right).\]
\end{enumerate}
\end{proposition}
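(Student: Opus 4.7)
For part (i), I plan to argue via Markov's inequality applied to an optimal Wasserstein coupling. Fix probability measures $\mu,\eta$ on a Polish space $(\AmbientSpace,\partial_\AmbientSpace)$; if $\Wa^p(\mu,\eta)=\infty$ the inequality is vacuous, so assume otherwise. Standard optimal-transport results guarantee a coupling $\pi$ on $\AmbientSpace\times\AmbientSpace$ achieving the infimum $\int \partial_\AmbientSpace(y,z)^p\,d\pi(y,z)=\Wa^p(\mu,\eta)^p$. For any closed $A\subset \AmbientSpace$ and any $\delta>0$, I would decompose
\[
\mu(A)=\pi(A\times \AmbientSpace) = \pi\bigl((A\times \AmbientSpace)\cap\{\partial_\AmbientSpace<\delta\}\bigr)+\pi\bigl((A\times \AmbientSpace)\cap\{\partial_\AmbientSpace\geq\delta\}\bigr).
\]
On the first piece, $(y,z)\in A\times \AmbientSpace$ together with $\partial_\AmbientSpace(y,z)<\delta$ forces $z\in A^\delta$, so the first piece is at most $\pi(\AmbientSpace\times A^\delta)=\eta(A^\delta)$; Markov's inequality bounds the second piece by $\delta^{-p}\Wa^p(\mu,\eta)^p$. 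Combined with the symmetric argument, this yields $\Pro(\mu,\eta)\leq \delta$ whenever $\delta^{-p}\Wa^p(\mu,\eta)^p\leq \delta$, equivalently whenever $\delta\geq \Wa^p(\mu,\eta)^{p/(p+1)}$. Choosing $\delta=\Wa^p(\mu,\eta)^{p/(p+1)}$ gives the first of the two bounds.

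For the complementary bound $\Pro\leq (\Wa^p)^{1/2}$, the plan is to first apply the step above with $p$ replaced by $1$ to obtain $\Pro(\mu,\eta)\leq \Wa^1(\mu,\eta)^{1/2}$, then dominate $\Wa^1$ by $\Wa^p$ via Jensen: taking $\pi$ to be $\Wa^p$-optimal and using convexity of $t\mapsto t^p$ on $[0,\infty)$ for $p\geq 1$,
\[
\Wa^1(\mu,\eta)\leq \int \partial_\AmbientSpace\,d\pi \leq \Bigl(\int \partial_\AmbientSpace(y,z)^p\,d\pi(y,z)\Bigr)^{1/p}=\Wa^p(\mu,\eta).
\]
Combining the two bounds and taking their minimum establishes (i).

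Part (ii) follows formally from (i). Given isometric embeddings $\varphi\colon\X\to Z$ and $\psi\colon\Y\to Z$ into a common metric space, applying (i) to $\varphi_*\eta_\X$ and $\psi_*\eta_\Y$ gives
\[
\Pro(\varphi_*\eta_\X,\psi_*\eta_\Y)\leq \min\bigl(\Wa^p(\varphi_*\eta_\X,\psi_*\eta_\Y)^{1/2},\ \Wa^p(\varphi_*\eta_\X,\psi_*\eta_\Y)^{p/(p+1)}\bigr).
\]
Since $x\mapsto \min(x^{1/2},x^{p/(p+1)})$ is continuous and non-decreasing on $[0,\infty)$, taking the infimum over $(\varphi,\psi)$ on both sides passes through the $\min$ and converts $\Wa^p$ into $\GW^p$ and the left side into $\GPr$. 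The only technical ingredient beyond elementary manipulations is the existence of an optimal Wasserstein coupling on a Polish space with finite $\Wa^p$; this is standard and I do not foresee any real obstacles.
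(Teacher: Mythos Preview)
Your proposal is correct and follows essentially the same route as the paper: the paper derives the $(\Wa^p)^{1/2}$ bound by citing the classical inequality $\Pro\leq (\Wa^1)^{1/2}$ together with the monotonicity $\Wa^1\leq \Wa^p$, cites \cite{bjerkevik2020presentation} for the $(\Wa^p)^{p/(p+1)}$ bound, and notes that (ii) follows immediately from (i). You simply supply the underlying Markov--coupling argument and the Jensen step explicitly rather than citing, which is exactly what those references contain.
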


\cref{Prop:Prohorov_Wasserstein_Bounds}\,(ii) follows immediately from \cref{Prop:Prohorov_Wasserstein_Bounds}\,(i).    To briefly explain how the bound of (i) arises, the bound $\Pro(\mu,\eta)\leq \Wa^1(\mu,\eta)^{\frac{1}{2}}$ appears in \cite{Su2002}.  Moreover, it is a standard fact that $\Wa^p\leq \Wa^q$ whenever $p\leq q$, so in fact, \[\Pro(\mu,\eta)\leq \Wa^p(\mu,\eta)^{\frac{1}{2}}\] for all $p\in [1,\infty)$.  In addition, a direct argument appearing in \cite{bjerkevik2020presentation} shows that \[\Pro(\mu,\eta)\leq \Wa^p(\mu,\eta)^{\frac{p}{p+1}}\]  for all $p\in [1,\infty)$.

\subsubsection{Metrization of Weak Convergence}
It is well known that both the Prohorov and Wasserstein distances metrize weak convergence of measures under suitable conditions.  We now briefly discus this.

\begin{definition}
Let $\mu$ be a measure on a topological space $T$.  A sequence of measures $\mu_1,\mu_2,\ldots$ on $T$ is said to \emph{weakly converge} to $\mu$ if \[\lim_{n\to \infty} \int f\, d\mu_n= \int f\, d\mu\] for all bounded, continuous functions $f:T\to \R$. 
\end{definition}

\begin{proposition}[Metrization of Weak Convergence {\cite[\S A.2.5]{daley2003}},\ {\cite[\S 6]{Villani2008}}]\label{Prop:Metrization_Of_Weak_Convergence}{}\mbox{}
\begin{enumerate}[(i)]
\item The Prohorov distance on a separable metric space metrizes weak convergence of finite measures.  
\item For any $p$, the $p$-Wasserstein distance on a bounded Polish space metrizes weak convergence of probability measures.  
\end{enumerate}
\end{proposition}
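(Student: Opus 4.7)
The plan is to prove both parts via the Portmanteau theorem, which characterizes weak convergence of finite measures: $\mu_n \to \mu$ weakly if and only if $\limsup_n \mu_n(F) \leq \mu(F)$ for every closed set $F$ (equivalently, $\liminf_n \mu_n(U) \geq \mu(U)$ for every open $U$).

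For part (i), the easier direction is that $\Pro(\mu_n,\mu) \to 0$ implies weak convergence. Given a closed set $F$, the definition of $\Pro$ yields $\mu_n(F) \leq \mu(F^{\delta_n}) + \delta_n$ where $\delta_n = \Pro(\mu_n,\mu) + 1/n \to 0$, so for each fixed $\epsilon>0$ we have $\limsup_n \mu_n(F) \leq \mu(F^{\epsilon}) + \epsilon$. Since $F$ is closed and $\mu$ is finite, $\mu(F^\epsilon) \downarrow \mu(F)$ as $\epsilon \downarrow 0$ by continuity of measure from above, and letting $\epsilon \to 0$ gives the Portmanteau criterion. For the converse, assuming weak convergence, I would exploit separability to choose, for each $\epsilon > 0$, a countable cover of $\AmbientSpace$ by open balls of radius $\epsilon/2$, refine this to a countable partition $\{A_i\}$ into Borel sets of diameter at most $\epsilon$ with $\mu(\partial A_i) = 0$ (this is possible by shifting ball radii slightly to avoid the countably many radii where $\mu$ charges a sphere), and truncate to a finite subcollection carrying all but $\epsilon$ of the mass. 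Applying Portmanteau on each continuity set $A_i$ yields $\mu_n(A_i) \to \mu(A_i)$, and a direct bookkeeping argument using these finitely many pieces then constructs an explicit $\epsilon$-witness for $\Pro(\mu_n,\mu)$.

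For part (ii), the key observation is that on a bounded Polish space $(\AmbientSpace,\partial_\AmbientSpace)$, the function $z \mapsto \partial_\AmbientSpace(z,z_0)^p$ is bounded and continuous for any $z_0$, so weak convergence $\mu_n \to \mu$ automatically implies $\int \partial_\AmbientSpace(z,z_0)^p\,d\mu_n \to \int \partial_\AmbientSpace(z,z_0)^p\,d\mu$. I would then invoke the general characterization (itself proved via the Portmanteau theorem and a tightness argument for couplings) that $\Wa^p(\mu_n,\mu) \to 0$ if and only if $\mu_n \to \mu$ weakly \emph{and} the $p^{\mathrm{th}}$ moments converge. The forward implication follows from the definition of $\Wa^p$ by taking an optimal coupling and using that $\partial_\AmbientSpace(y,z)^p$ dominates differences of distance functionals; the reverse implication requires the Prokhorov-type compactness of the coupling space together with moment control to rule out mass escaping in the transport cost. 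Combined with the automatic moment convergence noted above, both implications on a bounded space collapse to weak convergence alone.

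The main obstacle is the forward direction of part (i): separability must be used in a substantive way. Without it, one cannot produce the finite partitions into continuity sets of small diameter needed to convert pointwise weak-convergence information into a uniform Prohorov-type bound, and indeed the equivalence fails on non-separable spaces. Once that construction is in place, the rest of the argument is a bookkeeping exercise with the Portmanteau theorem.
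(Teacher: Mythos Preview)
The paper does not give its own proof of this proposition; it is stated as a standard fact with citations to \cite[\S A.2.5]{daley2003} and \cite[\S 6]{Villani2008}, and no argument is supplied in the paper itself. Your sketch is a reasonable outline of the standard proofs found in those references---Portmanteau plus a separability-based partition into continuity sets for (i), and the ``weak convergence plus moment convergence'' characterization of Wasserstein convergence for (ii), with the observation that moment convergence is automatic on a bounded space. Since there is no proof in the paper to compare against, there is nothing further to say about alignment of approaches.
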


We will use \cref{Prop:Metrization_Of_Weak_Convergence}\,(i)  in \cref{Sec:Convergence_of_SC}, to study the convergence of Subdivision-\v Cech bifiltrations of random samples of a metric probability space.  We will not use \cref{Prop:Metrization_Of_Weak_Convergence}\,(ii) in this paper. 

\begin{remark}
The statement of \cref{Prop:Metrization_Of_Weak_Convergence}\,(ii) can in fact be generalized to one which holds for arbitrary Polish spaces.  For this, one has to place some restrictions on the probability measures; see \cite[\S 6]{Villani2008}.
\end{remark}

\subsection{Interleavings and Homotopy
  Interleavings}\label{Sec:Interleavings}

We now turn to the task of defining metrics on the space of bifiltrations, using the
formalism of {\em interleavings}.  Interleavings are ubiquitous in the
persistent homology literature, and their theory is well developed; e.g., see~\cite{chazal2012structure,bauer2015induced,bjerkevik2016stability,de2016categorified,bubenik2015metrics}.
Interleavings were first defined for $\R$-indexed diagrams
in~\cite{chazal2009proximity} (though the definition is already
implicit in the earlier work~\cite{cohen2007stability}) and for
$\R^n$-indexed diagrams with $n\geq 1$ in~\cite{lesnick2015theory}.
The most fundamental result about interleavings, called the
\emph{isometry theorem} or the \emph{algebraic stability
theorem}, is a generalization of the original stability theorems for
persistent homology.  It says that the interleaving distance
between pointwise finite-dimensional functors $X,Y\colon \R\to \Vect$ is equal to the bottleneck
distance between their barcodes 
\cite{chazal2012structure,lesnick2015theory,bauer2015induced,bjerkevik2016stability}.  

While we do not directly use the isometry theorem in this paper, it motivates the use of interleavings to formulate stability results for 2-parameter persistence.  As further motivation,  interleaving distances on multi-parameter filtrations and persistence modules satisfy universal properties, which indicate that these are principled choices of distances \cite{lesnick2015theory,blumberg2017universality}.

\subsubsection{Interleavings}\label{Sec:Strict_Interleavings}
Recall that in \cref{sec:bifilt}, we have defined $\OurPoset$ to be
the poset $(0,\infty)^{\op}\times (0,\infty)$.  We now define
interleavings between $\OurPoset$-indexed diagrams, adapting a
definition introduced in \cite{lesnick2012multidimensional}.  We give
a general form of the definition, where the shifts are not necessarily
translations; such generalizations have previously been considered in several
places~\cite{lesnick2012multidimensional,bubenik2015metrics,harker2019comparison}.

Throughout, we assume that $\R^{\op}\times \R$ is endowed with the
product partial order, i.e., 
\[
(a,b)\leq (c,d) \quad\textrm{if and only if}\quad a\geq c\enskip
\textrm{and} \enskip b\leq d.
\]
Recall that a morphism of posets $f\colon P\to Q$ is a
function such that $f(x)\leq f(y)$ whenever $x\leq y$.    

\begin{definition}
We define a \emph{forward shift} to be an automorphism of posets
\[
\gamma\colon \R^{\op}\times \R\to \R^{\op}\times \R
\]
such that $x\leq \gamma(x)$ whenever $x \in \OurPoset$.   
\end{definition}

\begin{example}\label{Ex:Shift}
In the context of interleavings, the standard
example of a forward shift is the translation $\tau^\delta$, given by
\[
\tau^\delta(x,y)=(x-\delta,y+\delta),
\]
where $\delta\geq 0$. 
\end{example}

\begin{example}\label{ex:forwardflow}
For any $c\geq 1$ and $\delta\geq 0$, the map
\[
(x,y)\mapsto (x-\delta,cy+\delta)
\]
is also a forward shift.  We will work with such forward shifts when
we study the stability of the degree bifiltrations. 
\end{example}

We now check that the composition of forward shifts is itself a forward shift.  The following lemma will be helpful.  
\begin{lemma}\label{Lem:Forward_Shifts}
If $\gamma$ is a forward shift and $(x,y)\in (-\infty,0]^{\op}\times (0,\infty)$, then $\gamma(x,y)\geq (0,y).$
\end{lemma}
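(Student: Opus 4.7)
The plan is to handle the boundary case $x \leq 0$ by approximating from within $\OurPoset$ and taking a limit. The obstacle is that we cannot directly apply the forward-shift inequality $\gamma(p) \geq p$ at $p = (x,y)$ because the hypothesis only guarantees this inequality on $\OurPoset = (0,\infty)^{\op}\times (0,\infty)$, and here the first coordinate $x$ may be $0$ or negative.

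First I would fix $\epsilon > 0$ and observe that $(x,y) \geq (\epsilon, y)$ in the poset $\R^{\op}\times \R$: indeed $x \leq 0 < \epsilon$ in $\R$ means $x \geq \epsilon$ in $\R^{\op}$, and the second coordinates agree. Since $\gamma$ is an order-preserving automorphism, monotonicity yields
\[
\gamma(x,y) \geq \gamma(\epsilon, y).
\]
Now $(\epsilon, y) \in \OurPoset$, so the forward-shift hypothesis gives $\gamma(\epsilon, y) \geq (\epsilon, y)$, and chaining the two inequalities produces $\gamma(x,y) \geq (\epsilon, y)$ for every $\epsilon > 0$.

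Writing $\gamma(x,y) = (x', y')$ and unpacking the product order, the inequality $(x', y') \geq (\epsilon, y)$ says $x' \leq \epsilon$ (as real numbers) and $y' \geq y$. Taking $\epsilon \to 0^{+}$ in the first coordinate yields $x' \leq 0$, while the bound $y' \geq y$ is already independent of $\epsilon$. Hence $\gamma(x,y) \geq (0, y)$, as required. The argument uses only monotonicity of $\gamma$, the forward-shift condition on $\OurPoset$, and a one-sided limit; no genuinely hard step appears, and the proof should be compressible to a few lines.
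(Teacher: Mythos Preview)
Your proof is correct and is essentially identical to the paper's: both pick an arbitrary $\epsilon>0$ (the paper calls it $x'$), use monotonicity of $\gamma$ together with the forward-shift inequality at $(\epsilon,y)\in\OurPoset$ to obtain $\gamma(x,y)\geq(\epsilon,y)$, and then let $\epsilon$ range over all of $(0,\infty)$ to conclude $\gamma(x,y)\geq(0,y)$. The only difference is cosmetic---you phrase the last step as a limit, the paper phrases it as a universal quantification.
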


\begin{proof}
For $x'\in (0,\infty)^{\op}$, we have that $(x',y)\in \OurPoset$, so $\gamma(x',y)\geq (x',y)$.  Since $(x,y)\geq (x',y)$ and $\gamma$ is a morphism of posets, we thus have that $\gamma(x,y)\geq (x',y)$.  Because this holds for all $x'\in (0,\infty)^{\op}$, the result follows.
\end{proof}

\begin{proposition}
The composition of two forward shifts is a forward shift.
\end{proposition}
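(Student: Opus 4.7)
The plan is to verify the two defining properties of a forward shift for $\gamma_2 \circ \gamma_1$, where $\gamma_1,\gamma_2$ are forward shifts. The first property, that $\gamma_2 \circ \gamma_1$ is an automorphism of posets, is immediate since the composition of poset automorphisms is again a poset automorphism. The substantive content is verifying the second property: $x \leq \gamma_2(\gamma_1(x))$ for every $x \in \OurPoset = (0,\infty)^{\op}\times(0,\infty)$.

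First I would write $x = (a,b)$ with $a,b>0$ and set $(a',b') := \gamma_1(x)$. Since $\gamma_1$ is a forward shift, $(a',b') \geq (a,b)$ in $\R^{\op}\times\R$, which in our convention means $a' \leq a$ (usual order) and $b' \geq b$. In particular $b' > 0$, but $a'$ could be any real number, so $\gamma_1(x)$ may fail to lie in $\OurPoset$. This is the only genuine obstacle, and it is precisely the scenario that Lemma 2.22 is designed to handle.

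I would then split into two cases. If $a' > 0$, then $\gamma_1(x) \in \OurPoset$, so directly by the forward shift property of $\gamma_2$, $\gamma_2(\gamma_1(x)) \geq \gamma_1(x) \geq x$. If $a' \leq 0$, then $\gamma_1(x) \in (-\infty,0]^{\op}\times (0,\infty)$ with second coordinate $b' > 0$. Applying \cref{Lem:Forward_Shifts} to $\gamma_2$ yields $\gamma_2(\gamma_1(x)) \geq (0, b')$. Writing $\gamma_2(\gamma_1(x)) = (a'', b'')$, this gives $a'' \leq 0$ and $b'' \geq b'$. Combining with $a > 0 \geq a''$ and $b'' \geq b' \geq b$, we conclude $\gamma_2(\gamma_1(x)) \geq (a,b) = x$ in $\R^{\op}\times\R$.

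In both cases we get the desired inequality, so $\gamma_2 \circ \gamma_1$ is a forward shift. The main conceptual point — and the reason Lemma 2.22 was stated just before — is that the forward shift condition is only imposed on the sub-poset $\OurPoset$, so when composing, one must control what happens when an intermediate image leaves $\OurPoset$; the lemma supplies exactly this control.
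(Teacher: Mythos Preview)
Your proof is correct and follows essentially the same approach as the paper's: both split into cases according to whether $\gamma_1(x)$ lies in $\OurPoset$, invoking the forward shift property of $\gamma_2$ directly in the first case and \cref{Lem:Forward_Shifts} in the second. Your version simply spells out the coordinate inequalities in the second case more explicitly than the paper does.
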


\begin{proof}
Let $\gamma^1$ and $\gamma^2$ be forward shifts, and consider $x\in \OurPoset$.  We need to show that $x\leq \gamma^2\circ \gamma^1(x)$.  Note that $x\leq \gamma^1(x)$. 
If $\gamma^1(x)\in \OurPoset$, then we also have $\gamma^1(x)\leq \gamma^2\circ \gamma^1(x)$, and the result follows by transitivity.  If $\gamma^1(x)\not\in \OurPoset$, then the result follows from \cref{Lem:Forward_Shifts}.
\end{proof}

Recall that a category $\C$ is called \emph{thin} if for every $a,b\in \ob
\C$, there is at most one morphism in $\C$ from $a$ to $b$.  
If $\C$ is thin, $F\colon \C\to \D$ is any functor, and $g\colon a\to b$ is a morphism in $\C$, we denote $F(g)$ as $F_{a,b}$.

\begin{definition}
For $\gamma$ and $\kappa$ two forward shifts, let the
\emph{$(\gamma,\kappa)$-interleaving category}, denoted
$\I^{\gamma,\kappa}$, be the thin category with object set $\OurPoset
\times \{0,1\}$ and a morphism $(r,i)\to (s,j)$ if and only if one of
the following is true:
\begin{itemize}
\item $i=j$ and $r\leq s$,
\item $i=0$, $j=1$, and $\gamma(r)\leq s$,
\item $i=1$, $j=0$, and $\kappa(r)\leq s$.
\end{itemize}
\end{definition}

We have embeddings 
\[
E^0,E^1\colon \OurPoset\to \I^{\gamma,\kappa}.
\]
mapping $r\in \OurPoset$ to $(r,0)$ and $(r,1)$, respectively.

\begin{definition}\label{Def:Interleaving}
\mbox{}
\begin{enumerate}[(i)]
\item Given forward shifts $\gamma,\kappa$ and any category $\C$, we define a
\emph{$(\gamma,\kappa)$-interleaving} between functors $\F,\G\colon
\OurPoset \to \C$ to be a functor 
\[
\Hcal\colon \I^{\gamma,\kappa}\to \C
\]
such that $\Hcal\circ E^0=\F$ and $\Hcal\circ E^1=\G$.  
\item  We refer to a
$(\gamma,\gamma)$-interleaving simply as a $\gamma$-interleaving, and for $\delta\geq 0$, we refer to a $\tau^\delta$-interleaving as a $\delta$-interleaving.
\end{enumerate}  
\end{definition}

In this work, we consider the cases $\C=\Top$
and $\C=\Vect$.

We can now define an extended pseudometric on bifiltrations in terms of interleavings.

\begin{definition}\label{Def:Interleaving_Distance}
We define 
\[
d_I\colon \ob \C^{\OurPoset} \times \ob \C^{\OurPoset} \to [0,\infty],
\]
the \emph{interleaving distance}, by taking  
\[
d_I(\F,\G)=\inf\, \{\delta \mid \F\textup{ and  }\G\textup{
    are }\delta\textup{-interleaved}\}.
\]
\end{definition}
It is easily checked that $d_I$ is an extended pseudometric.

\subsubsection{Homotopy Interleavings}\label{Sec:HI}

In prior work~\cite{blumberg2017universality}, we introduced
\emph{homotopy interleavings}, homotopical generalizations of
interleavings which are useful for formulating TDA results directly
at the space level, rather than on the algebraic (homological) level.
In~\cite{blumberg2017universality}, we defined $\delta$-homotopy interleavings for $\R$-indexed diagrams
of topological spaces.  The definition generalizes without difficulty
to our setting, as we now explain.

Recall that a {\em diagram of spaces} is a functor $\F \colon \C \to
\Top$, where $\C$ is 
a small category.  
The following definition is the starting point for the
homotopy theory of diagrams; see, e.g., ~\cite{hirschhorn}.

\begin{definition}\label{Def:Weakly_Equivalent}
For $\C$ a small category and functors $\F,\G\colon \C\to \Top$, a
natural transformation $f\colon \F\to \G$ is an \emph{objectwise weak
  equivalence} if $f_x$ is a weak homotopy equivalence for all $x\in
\C$. We say that diagrams $\F$ and $\G$ are \emph{weakly equivalent},
and write $\F\htp \G$ if there is a finite sequence of functors
\[\F=\Hcal_1,\Hcal_2,\ldots, \Hcal_n=\G,\]
with each $K_i$ also a functor from $\C$ to $\Top$, such that for all $i\in \{1,.\dots,n-1\}$
there exists either a weak equivalence $\Hcal_i\to  \Hcal_{i+1}$ or a weak
equivalence $\Hcal_{i+1}\to \Hcal_i$. 
\end{definition}

Using the notion of weak equivalence, we define a homotopical
refinement of interleavings:

\begin{definition}\label{def:HI}\mbox{}
\begin{enumerate}[(i)]
\item Given forward shifts $\gamma$ and $\kappa$, functors $\F,\G\colon
\OurPoset\to \Top$ are \emph{$(\gamma,\kappa)$-homotopy interleaved}
if there exist functors $\F',\G'\colon \OurPoset\to \Top$ with $\F'\htp \F$ and
$\G'\htp \G$ such that $\F'$ and $\G'$ are $(\gamma,\kappa)$-\textup{interleaved}. 
\item  In analogy with \cref{Def:Interleaving}\,(ii), we say that $\F$ and $\G$ are $\gamma$-homotopy interleaved if they are $(\gamma,\gamma)$-interleaved, and we say that they are $\delta$-homotopy interleaved if they are $\tau^\delta$-interleaved.
\end{enumerate}
\end{definition}

\begin{definition}
The \emph{homotopy interleaving distance} between functors $\F,\G\colon \OurPoset\to \Top$ is given by 
\[
d_{HI}(\F,\G):=\inf\, \{\delta \mid \F,\G\textup{ are
}\delta\textup{-homotopy interleaved}\}.
\] 
\end{definition}

The following proposition implies in particular that $d_{HI}$ is an extended pseudometric.

\begin{proposition}[Generalized Triangle Inequality for Homotopy Interleavings]\label{Prop:Triangle_Inequality}
Consider functors $\F,\G,\Hcal\colon \OurPoset\to \Top$ and forward shifts $\gamma^1,\gamma^2,\kappa^1,\kappa^2$.  If  $\F,\G$ are $(\gamma^1,\kappa^1)$-homotopy interleaved and $\G,\Hcal$ are $(\gamma^2,\kappa^2)$-homotopy interleaved then $\F,\Hcal$ are $(\gamma^2\circ \gamma^1,\kappa^1\circ \kappa^2)$-homotopy interleaved.
\end{proposition}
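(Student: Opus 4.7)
The plan is to reduce the statement to composing two strict generalized interleavings that share a common middle diagram, obtained after replacing the data coming from the hypothesis by weakly equivalent diagrams in the projective model structure on $\Top^{\OurPoset}$.

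First I would unpack \cref{def:HI}\,(i) to obtain functors $\F',\G_1,\G_2,\Hcal'\colon \OurPoset\to\Top$ with $\F'\htp \F$, $\G_1\htp \G$, $\G_2\htp \G$, and $\Hcal'\htp \Hcal$, together with strict interleaving functors $\Hcal^1\colon \I^{\gamma^1,\kappa^1}\to\Top$ restricting to $\F'$ and $\G_1$, and $\Hcal^2\colon \I^{\gamma^2,\kappa^2}\to\Top$ restricting to $\G_2$ and $\Hcal'$. The conclusion then amounts to producing functors $\F''\htp \F$ and $\Hcal''\htp \Hcal$ that are strictly $(\gamma^2\circ\gamma^1,\kappa^1\circ\kappa^2)$-interleaved.

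The easy ingredient is a composition lemma for strict generalized interleavings: if $A$ and $B$ are strictly $(\gamma^1,\kappa^1)$-interleaved and $B$ and $C$ are strictly $(\gamma^2,\kappa^2)$-interleaved through the same middle diagram $B$, then composing the comparison maps $A_r\to B_{\gamma^1(r)}\to C_{\gamma^2\gamma^1(r)}$ and $C_r\to B_{\kappa^2(r)}\to A_{\kappa^1\kappa^2(r)}$ and assembling them with the internal structure maps of $A$ and $C$ defines a strict $(\gamma^2\circ\gamma^1,\kappa^1\circ\kappa^2)$-interleaving of $A$ and $C$. Because $\I^{\gamma,\kappa}$ is thin, no nontrivial commutativity conditions need to be checked: one only needs the required morphisms to exist in the target interleaving category, which is guaranteed by the composability of forward shifts established earlier in the section.

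The main obstacle is that the two strict interleavings produced above share only a weakly equivalent middle diagram, not an identical one: $\G_1\htp \G_2$ via the zigzag through $\G$, but in general $\G_1\ne \G_2$. To bridge this, I would appeal to the projective model structure on $\Top^{\OurPoset}$, in which cofibrant replacements exist and weak equivalences between cofibrant diagrams are invertible up to homotopy. Choosing a common cofibrant replacement $\G^c$ with weak equivalences $\G^c\to \G_1$ and $\G^c\to \G_2$, I would use the lifting property of cofibrations against acyclic fibrations inside the diagram categories $\Top^{\I^{\gamma^i,\kappa^i}}$ to replace $\Hcal^1$ and $\Hcal^2$ by strict interleavings whose middle term is $\G^c$, at the cost of replacing their outer terms by weakly equivalent diagrams $\F''\htp \F'$ and $\Hcal''\htp \Hcal'$. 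Applying the composition lemma to these two modified interleavings then yields the strict $(\gamma^2\circ\gamma^1,\kappa^1\circ\kappa^2)$-interleaving of $\F''$ and $\Hcal''$ required for the conclusion. This model-categorical replacement step is a straightforward asymmetric adaptation of the argument used in~\cite{blumberg2017universality} to prove the triangle inequality for the symmetric homotopy interleaving distance.
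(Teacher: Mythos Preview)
Your proposal is essentially aligned with the paper's approach: the paper gives no detailed argument, simply stating that the proof is the same as in \cite{blumberg2017universality}, ``using a homotopy Kan extension.'' You correctly isolate the key obstacle---that the two strict interleavings produced from the hypotheses share only weakly equivalent middle diagrams $\G_1\htp\G_2$, not identical ones---and propose to resolve it by a model-categorical replacement in $\Top^{\OurPoset}$, exactly as the cited reference does.

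The one difference is terminological. You phrase the alignment step in terms of cofibrant replacement and lifting against acyclic fibrations, whereas the paper names homotopy Kan extension as the tool. These are closely related: the homotopy left Kan extension along the embedding $E^1\colon\OurPoset\hookrightarrow\I^{\gamma^i,\kappa^i}$ is computed by cofibrantly replacing the input and applying the ordinary left Kan extension, and the resulting comparison maps are controlled precisely by the lifting properties you invoke. Your sketch is thin at the point where the lifting forces the middle term to be \emph{exactly} $\G^c$ rather than merely weakly equivalent to it---naively factoring $E^1_!\G^c\to\Hcal^1$ as cofibration followed by acyclic fibration gives a middle term only weakly equivalent to $\G^c$---but the paper defers this same detail entirely to \cite{blumberg2017universality}, so your proposal is no less complete than the paper's own treatment.
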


The analogue of \cref{Prop:Triangle_Inequality} is established
in~\cite{blumberg2017universality} for $\delta$-interleavings of
$\R$-indexed diagrams of spaces, using a homotopy Kan extension.  The
proof of \cref{Prop:Triangle_Inequality} is essentially the same; we omit it.

For forward shifts $\gamma^1$ and $\gamma^2$, write $\gamma^1\leq \gamma^2$ if $\gamma^1(x)\leq \gamma^2(x)$ for all $x\in \OurPoset$.  

\begin{proposition}[Cf. {\cite[Proposition 2.2.12]{bubenik2015metrics}}]\label{Prop:Interleaving_Monotonicity}
If $\gamma^1\leq \gamma^2$ and $\kappa^1\leq\kappa^2$ are forward shifts and $\F,\G\colon \OurPoset\to \Top$ are $(\gamma^1,\kappa^1)$-homotopy interleaved, then $\F,\G$ are $(\gamma^2,\kappa^2)$-homotopy interleaved.
\end{proposition}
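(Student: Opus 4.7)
The plan is to reduce immediately to the strict interleaving case and then build a canonical identity-on-objects inclusion functor $\iota\colon \I^{\gamma^2,\kappa^2}\to \I^{\gamma^1,\kappa^1}$ along which one can restrict any given $(\gamma^1,\kappa^1)$-interleaving. By the definition of homotopy interleaving, the hypothesis gives functors $\F'\htp \F$ and $\G'\htp \G$ admitting a strict $(\gamma^1,\kappa^1)$-interleaving $\Hcal\colon \I^{\gamma^1,\kappa^1}\to \Top$. It therefore suffices to produce from $\Hcal$ a strict $(\gamma^2,\kappa^2)$-interleaving between the \emph{same} $\F'$ and $\G'$; the weak equivalences $\F'\htp\F$ and $\G'\htp \G$ are retained unchanged, which then witnesses a $(\gamma^2,\kappa^2)$-homotopy interleaving between $\F$ and $\G$.

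Next I would construct $\iota$. It is the identity on objects, and on morphisms it uses the two hypotheses. A non-identity morphism in $\I^{\gamma^2,\kappa^2}$ has one of three forms, and in each case I need only observe that the defining inequality in $\I^{\gamma^2,\kappa^2}$ implies the corresponding one in $\I^{\gamma^1,\kappa^1}$: a morphism $(r,i)\to (s,i)$ requires $r\leq s$ and is unchanged; a morphism $(r,0)\to (s,1)$ requires $\gamma^2(r)\leq s$, whence $\gamma^1(r)\leq \gamma^2(r)\leq s$ by $\gamma^1\leq \gamma^2$; and a morphism $(r,1)\to (s,0)$ analogously uses $\kappa^1\leq \kappa^2$. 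Since both interleaving categories are thin, functoriality of $\iota$ is automatic. By construction, $\iota\circ E^i_{\gamma^2,\kappa^2}=E^i_{\gamma^1,\kappa^1}$ for $i=0,1$, since the embeddings use only the $i=j$ morphisms, which $\iota$ fixes.

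Finally I would set $\Hcal':=\Hcal\circ \iota\colon \I^{\gamma^2,\kappa^2}\to \Top$. Then $\Hcal'\circ E^0_{\gamma^2,\kappa^2}=\Hcal\circ E^0_{\gamma^1,\kappa^1}=\F'$ and similarly $\Hcal'\circ E^1_{\gamma^2,\kappa^2}=\G'$, so $\Hcal'$ is the required strict $(\gamma^2,\kappa^2)$-interleaving of $\F'$ and $\G'$. Combining this with $\F'\htp \F$ and $\G'\htp \G$ produces the desired $(\gamma^2,\kappa^2)$-homotopy interleaving of $\F$ and $\G$.

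I do not anticipate any real obstacle: the entire content of the argument lies in checking that the hypotheses $\gamma^1\leq \gamma^2$ and $\kappa^1\leq \kappa^2$ make $\iota$ well-defined on morphisms, and this is immediate from the definition of the interleaving category. The only item requiring a small sanity check is that $\iota$ is genuinely compatible with the embeddings $E^0,E^1$, which is why I highlight it above; thinness of the interleaving categories then bypasses any further bookkeeping about composition.
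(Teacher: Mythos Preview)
Your proof is correct and follows exactly the same approach as the paper: the paper's one-line proof simply states that a $(\gamma^1,\kappa^1)$-interleaving restricts to a $(\gamma^2,\kappa^2)$-interleaving, and your construction of the identity-on-objects functor $\iota$ is precisely what makes this restriction precise.
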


\begin{proof}
A $(\gamma^1,\kappa^1)$-interleaving restricts to a $(\gamma^2,\kappa^2)$-interleaving.
\end{proof}

\begin{remark}\label{Rem:Strict_Variants}
The obvious analogues of \cref{Prop:Triangle_Inequality} and \cref{Prop:Interleaving_Monotonicity} for strict interleavings also hold.
\end{remark}

The following result, whose easy proof we omit, tells us that homology preserves interleavings in the expected way:

\begin{proposition}\label{Proposition:Homotopy_to_Homology_Interleavings}
If $\F,\G\colon \OurPoset \to \Top$ are homotopy $(\gamma,\kappa)$-interleaved, then $H_i \F,H_i\G$ are $(\gamma,\kappa)$-interleaved for all $i\geq 0$.
\end{proposition}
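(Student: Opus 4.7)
The plan is to unpack the definition of homotopy interleaving, apply $H_i$ to produce a strict interleaving at the level of persistence modules, and then transport it along natural isomorphisms coming from the weak equivalences.

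By \cref{def:HI}, there exist functors $\F',\G'\colon \OurPoset\to \Top$ with $\F'\htp \F$ and $\G'\htp \G$, together with a functor $\Hcal\colon \I^{\gamma,\kappa}\to \Top$ satisfying $\Hcal\circ E^0=\F'$ and $\Hcal\circ E^1=\G'$. The first observation is that the composite $H_i\circ \Hcal\colon \I^{\gamma,\kappa}\to \Vect$ is, by construction, a strict $(\gamma,\kappa)$-interleaving of $H_i\F'$ and $H_i\G'$. So it remains to move from $H_i\F',H_i\G'$ back to $H_i\F,H_i\G$.

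For this, I would exploit the fact that $H_i$ sends weak homotopy equivalences to isomorphisms. Since $\F'\htp\F$ is witnessed by a finite zigzag of natural transformations that are objectwise weak equivalences, applying $H_i$ objectwise yields a zigzag of natural isomorphisms in $\Vect^{\OurPoset}$; composing with inverses gives a natural isomorphism $\alpha\colon H_i\F\xrightarrow{\cong} H_i\F'$. In the same way, $\G'\htp \G$ produces a natural isomorphism $\beta\colon H_i\G\xrightarrow{\cong} H_i\G'$.

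Finally, I would transport the interleaving $H_i\Hcal$ along $\alpha$ and $\beta$ to obtain a functor $\Hcal'\colon \I^{\gamma,\kappa}\to \Vect$ with $\Hcal'\circ E^0=H_i\F$ and $\Hcal'\circ E^1=H_i\G$. Concretely, set $\Hcal'(r,0)=H_i\F_r$ and $\Hcal'(r,1)=H_i\G_r$; use $H_i\F$ and $H_i\G$ on the "intra-row" morphisms, and conjugate the cross-morphisms of $H_i\Hcal$ by $\alpha$ and $\beta$, e.g.\ for a morphism $(r,0)\to(s,1)$ in $\I^{\gamma,\kappa}$ send it to $\beta_s^{-1}\circ (H_i\Hcal)_{(r,0),(s,1)}\circ \alpha_r$, and analogously for $(r,1)\to(s,0)$. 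Functoriality follows from the naturality of $\alpha$ and $\beta$ together with the functoriality of $H_i\Hcal$. The only part that requires a little care is checking that this transport is well-defined and functorial; this is routine diagram-chasing and I expect no real obstacle, since thinness of $\I^{\gamma,\kappa}$ reduces the verification to checking that each conjugated square commutes.
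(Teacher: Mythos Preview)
Your proposal is correct. The paper states this proposition without proof, so your argument fills in exactly what is left implicit: applying $H_i$ to the strict interleaving $\Hcal$ yields a $(\gamma,\kappa)$-interleaving of $H_i\F'$ and $H_i\G'$, and since singular homology sends weak homotopy equivalences to isomorphisms, the zigzags witnessing $\F'\htp\F$ and $\G'\htp\G$ become zigzags of natural isomorphisms, which compose to the natural isomorphisms $\alpha$ and $\beta$ you describe. The transport step is indeed routine, and thinness of $\I^{\gamma,\kappa}$ makes the functoriality check automatic once the individual squares are seen to commute by naturality.
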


\section{Main Results}\label{sec:theorems}
\subsection{Stability of the Multicover and Subdivision Bifiltrations}\label{Sec:_Sbdivision_Stability}

We now prove \cref{Thm:Multicover_and_Subdivision_Cech_Stability}, our
stability result for the multicover and 
subdivision bifiltrations. Let us recall the statement of the theorem.  First, we remind the reader of \cref{Not:Counting_Measures}: For $X$ a non-empty, finite metric space, $\mu_X$ denotes the uniform probability measure on $X$, and if $X$ is a subset of a metric space $\AmbientSpace$, then $\nu_X$ denotes the pushforward of  $\mu_X$ into $\AmbientSpace$.

\begin{reptheorem}{Thm:Multicover_and_Subdivision_Cech_Stability}
\mbox{}
\begin{enumerate}
\item[(i)] For $X$ and $Y$ non-empty, finite subsets of a metric
  space $\AmbientSpace$, \[d_{I}(\nMult(X),\nMult(Y)) \leq \Pro(\nu_X,\nu_Y).\]
\item[(ii)] In (i), if we assume further that $\AmbientSpace$ is good, then also \[d_{HI}(\nSCech(X),\nSCech(Y)) \leq \Pro(\nu_X,\nu_Y).\]
\item[(iii)] For any non-empty, finite metric spaces $X$ and $Y$, 
\[
d_{HI}(\nSRips(X),\nSRips(Y)) \leq \GPr(\mu_X,\mu_Y).
\] 
\end{enumerate}
\end{reptheorem}

We first prove
\cref{Thm:Multicover_and_Subdivision_Cech_Stability}\,(i), our
stability result for the multicover filtration.  In fact, we prove the following generalization: 

\begin{theorem}[Stability of Measure Bifiltrations]\label{Thm:Measure_Stability}
For any measures $\mu$ and $\eta$ on a common metric space $\AmbientSpace$, 
\[d_{I}(\MB(\mu),\MB(\eta)) \leq \Pro(\mu,\eta).\]
\end{theorem}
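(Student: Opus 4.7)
The plan is to show that whenever $\delta > \Pro(\mu,\eta)$, the bifiltrations $\MB(\mu)$ and $\MB(\eta)$ are $\delta$-interleaved in $\Top$; the bound on $d_I$ then follows by taking the infimum over such $\delta$. Since $\MB(\mu)_{(k,r)}$ and $\MB(\eta)_{(k,r)}$ are both subsets of the common ambient space $\AmbientSpace$, and since the internal maps of each bifiltration are inclusions, it suffices to exhibit set inclusions
\[
\MB(\mu)_{(k,r)} \subseteq \MB(\eta)_{(k-\delta,\, r+\delta)} \quad\text{and}\quad \MB(\eta)_{(k,r)} \subseteq \MB(\mu)_{(k-\delta,\, r+\delta)}
\]
for all $(k,r) \in \OurPoset$; the interleaving $\Hcal \colon \I^{\tau^\delta,\tau^\delta} \to \Top$ is then defined on objects by $\MB(\mu)$ and $\MB(\eta)$ and on all morphisms by these inclusions (which compose correctly because they all take place inside $\AmbientSpace$). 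Note that when $k - \delta \leq 0$, the constraint on the target index is vacuous in the $(0,\infty)^{\op}$ coordinate, so no separate case analysis is needed.

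By symmetry, I will focus on the first inclusion. Fix $y \in \MB(\mu)_{(k,r)}$, so that $\mu(B(y,r)) \geq k$. The first key step is to transfer the hypothesis on $\mu$ to a conclusion about $\eta$ via the Prohorov bound. Since $\delta > \Pro(\mu,\eta)$, the definition of $\Pro$ gives $\mu(A) \leq \eta(A^\delta) + \delta$ for every closed set $A \subseteq \AmbientSpace$. I will apply this with $A = \bar B(y, r')$ for $r' < r$, using that $A^\delta \subseteq B(y, r' + \delta) \subseteq B(y, r + \delta)$ (here I rely on the strict inequality in the definition of $A^\delta$ to stay inside the open ball). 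This yields
\[
\mu(\bar B(y, r')) \leq \eta(B(y, r+\delta)) + \delta \quad \text{for every } r' < r.
\]

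The second step is a continuity-from-below argument: because $B(y,r) = \bigcup_{r' < r} \bar B(y, r')$ is an increasing union of measurable sets,
\[
\mu(B(y,r)) = \sup_{r' < r} \mu(\bar B(y, r')) \leq \eta(B(y, r+\delta)) + \delta.
\]
Rearranging gives $\eta(B(y, r+\delta)) \geq k - \delta$, which is exactly $y \in \MB(\eta)_{(k-\delta,\, r+\delta)}$ when $k - \delta > 0$, and a triviality otherwise. The symmetric inclusion is obtained by swapping the roles of $\mu$ and $\eta$.

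The only mild obstacle is the mismatch between the open balls used in $\MB$ and the closed sets used in the definition of $\Pro$; this is resolved by the passage $\bar B(y,r') \nearrow B(y,r)$ above, together with the observation that $A^\delta$ uses a strict inequality, which is exactly what is needed to fit inside the open ball of radius $r + \delta$. Once the inclusions are in hand, defining the interleaving and verifying functoriality is immediate, since everything is an inclusion of subsets of $\AmbientSpace$.
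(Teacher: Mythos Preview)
Your proof is correct and follows the same approach as the paper: both establish the inclusions $\MB(\mu)_{(k,r)} \subseteq \MB(\eta)_{(k-\delta,r+\delta)}$ directly via the Prohorov inequality and the triangle inequality, with the interleaving given by inclusions. The one minor difference is that the paper handles the open/closed mismatch more simply by applying the Prohorov bound to the single closed set $\bar B(y,r)\supseteq B(y,r)$ (so $\mu(\bar B(y,r))\geq k$ immediately), rather than to $\bar B(y,r')$ for $r'<r$ followed by your continuity-from-below step; both routes are valid, but the paper's avoids the limiting argument.
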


\cref{Thm:Multicover_and_Subdivision_Cech_Stability}\,(i) follows immediately from \cref{Thm:Measure_Stability} by taking $\mu=\nu_X$ and $\eta=\nu_Y$, 
\begin{remark}\label{Rem:Unnormalized_Results}

In our notation, $\nu_X$ and $\nu_Y$ are the normalized 
counting measures of $X$ and $Y$, respectively.  Considering instead the \emph{unnormalized} counting measures,
\cref{Thm:Measure_Stability} also yields an unnormalized version of
\cref{Thm:Multicover_and_Subdivision_Cech_Stability}\,(i), which
requires no finiteness assumption on $X$ and $Y$.  In fact,
all of our stability results also admit analogous unnormalized variants.  We
have chosen to emphasize the normalized version of the results in our
exposition because the definition of the Wasserstein distances does
not make sense for pairs of measures with different total measure, so
only the normalized versions of our results imply corresponding
stability bounds for the (Gromov--)Wasserstein distances.
\end{remark}

\begin{proof}[Proof of \cref{Thm:Measure_Stability}]
It suffices to show
that that for any $\delta>\Pro(\mu,\eta)$, $\MB(\mu)$ and
$\MB(\eta)$ are $\delta$-interleaved.  As above, for $x\in \AmbientSpace$ and $r> 0$, let
$\Ball(x,r)$ denote the open ball of radius $r$ centered at $x$, and let
$\bar \Ball(x,r)$ denote its closure.  For any $(k,r)\in \OurPoset$, if
$x\in \MB(\mu)_{(k,r)}$, then $\mu(B(x,r))\geq k$.  As $\Ball(x,r)\subset \bar \Ball(x,r)$, we have that $\mu(\bar
\Ball(x,r))\geq k$.  By the definition of $\Pro$, 
\[\eta(\bar
\Ball(x,r)^\delta)+\delta \geq \mu(\bar \Ball(x,r)),\] 
so $\eta(\bar \Ball(x,r)^\delta)\geq k-\delta$.  But by the triangle inequality
\[
\bar \Ball(x,r)^\delta\subset \Ball(x,r+\delta),
\]
so $\eta(\Ball(x,r+\delta))\geq k-\delta$.  Thus,
if $k>\delta$, so that $\MB(\eta)_{(k-\delta,r+\delta)}$ is
defined, then $x\in \MB(\eta)_{(k-\delta,r+\delta)}$; it follows that 
\[
\MB(\mu)_{(k,r)}\subset \MB(\eta)_{(k-\delta,r+\delta)}.
\]
The same is true with the roles of $\mu$ and $\eta$ reversed.  Thus,
$\MB(\mu)$ and $\MB(\eta)$ are $\delta$-interleaved, with the
interleaving given by inclusion maps.
\end{proof}

\cref{Thm:Multicover_and_Subdivision_Cech_Stability}\,(ii) now follows
immediately from statement (ii) of the following version of
Sheehy's multicover nerve theorem.

\begin{theorem}[Multicover Nerve Theorem for
Bifiltrations]\label{Thm:Multicover}\mbox{}
Given a good metric space $\AmbientSpace$ and $X \subset
\AmbientSpace$,
\begin{enumerate}[(i)]
\item the unnormalized bifiltrations $\SCech(X)$ and $\Mult(X)$ are weakly equivalent,
\item if $X$ is finite, then the normalized bifiltrations $\nSCech(X)$ and $\nMult(X)$ are also weakly equivalent. 
\end{enumerate}
\end{theorem}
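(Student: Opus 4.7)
The plan is to construct a natural zig-zag of pointwise weak equivalences $\Mult(X)\leftarrow B(X)\to |\SCech(X)|$ of functors $\OurPoset\to \Top$, where $B(X)$ is an auxiliary bifiltration built as a (functorial) homotopy colimit. For each $(k,r)\in\OurPoset$, let $P_{(k,r)}$ denote the poset, ordered by inclusion, of subsets $\sigma\subset X$ with $|\sigma|\geq k$ and $V_\sigma^r:=\bigcap_{x\in\sigma}\Ball(x,r)\neq\emptyset$. Then $\sigma\mapsto V_\sigma^r$ is a functor $F_{(k,r)}\colon P_{(k,r)}^{\op}\to\Top$ whose set-theoretic colimit is exactly $\Mult(X)_{(k,r)}$, and whose indexing category has nerve equal to $\SCech(X)_{(k,r)}$ (both consist of strict chains $\sigma_0\subsetneq\dots\subsetneq\sigma_n$ in $P_{(k,r)}$). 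Define $B(X)_{(k,r)}$ to be the Bousfield--Kan homotopy colimit of $F_{(k,r)}$.

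Next I would verify both maps in the zig-zag are pointwise weak equivalences. Because $\AmbientSpace$ is good, each $V_\sigma^r$ is either empty or contractible, and moreover every nonempty finite intersection $V_{\sigma_1}^r\cap\dots\cap V_{\sigma_n}^r=V_{\sigma_1\cup\dots\cup\sigma_n}^r$ is again contractible. Thus $\{V_\sigma^r\}_{\sigma\in P_{(k,r)}}$ is a good open cover of $\Mult(X)_{(k,r)}$, and the projection lemma (or equivalently the nerve theorem in its homotopy-colimit formulation) yields a weak equivalence $B(X)_{(k,r)}\xrightarrow{\sim}\Mult(X)_{(k,r)}$. On the other hand, since each $V_\sigma^r$ is contractible, the standard fact that the homotopy colimit of a contractible diagram is weakly equivalent to the classifying space of the indexing category gives $B(X)_{(k,r)}\xrightarrow{\sim}|N(P_{(k,r)})|=|\SCech(X)_{(k,r)}|$.

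For naturality in $(k,r)$, note that if $(k,r)\leq (k',r')$ in $\OurPoset$ (so $k\geq k'$ and $r\leq r'$), then $P_{(k,r)}\subseteq P_{(k',r')}$: decreasing the size threshold weakens a constraint, and enlarging $r$ preserves nonemptiness of intersections via the inclusions $V_\sigma^r\hookrightarrow V_\sigma^{r'}$. These poset inclusions together with the inclusions of the $V_\sigma^r$ assemble into a map of diagrams $F_{(k,r)}\to F_{(k',r')}$ covering the inclusion $\Mult(X)_{(k,r)}\hookrightarrow\Mult(X)_{(k',r')}$ and the inclusion $\SCech(X)_{(k,r)}\hookrightarrow\SCech(X)_{(k',r')}$. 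Since the Bousfield--Kan model of hocolim is functorial, both legs of the zig-zag are natural transformations of functors $\OurPoset\to\Top$, establishing (i). Part (ii) then follows by a reparametrization: $\Mult(X)_{(k,r)}$ and $\SCech(X)_{(k,r)}$ are step functions in $k$ that change only at integers (since both depend only on $\lceil k\rceil$), so the map $(k,r)\mapsto (k|X|,r)$ yields $\nMult(X)\simeq\Mult(X)\circ (k\mapsto k|X|)$ and similarly for $\nSCech$, and the zig-zag from (i) pulls back to one between these bifiltrations.

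The main obstacle I anticipate is not the pointwise weak equivalences, which are classical, but the genuine functoriality of the zig-zag in the bifiltration parameter. This requires working with a rigid model of the homotopy colimit (e.g., the Bousfield--Kan simplicial replacement) and checking that the natural projections to both $\Mult(X)_{(k,r)}$ and $|\SCech(X)_{(k,r)}|$ commute strictly with the transition maps induced by $P_{(k,r)}\subseteq P_{(k',r')}$. This is essentially the content of Cavanna's argument \cite{cavanna17when} in the one-parameter setting, and the key technical point is that the bifiltered enlargement of the poset of $\sigma$'s is compatible with the enlargement of the cover, so that no additional combinatorial subtlety arises in passing from one parameter to two.
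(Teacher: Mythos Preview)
Your overall architecture matches the paper's exactly: both build the zig-zag
\[\Mult(X)_{(k,r)}\xleftarrow{\ \rho^2\ }\hocolim_{P_{(k,r)}^{\op}} F_{(k,r)}\xrightarrow{\ \rho^1\ }|\SCech(X)_{(k,r)}|,\]
and both note that the right-hand map is an equivalence because the values of $F_{(k,r)}$ are contractible, while functoriality in $(k,r)$ follows from the evident inclusions of posets and open sets.

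However, you have misidentified where the difficulty lies. You write that ``the main obstacle \ldots\ is not the pointwise weak equivalences, which are classical, but the genuine functoriality.'' The paper says precisely the opposite: functoriality is routine, whereas the pointwise equivalence $\rho^2\colon B(X)_{(k,r)}\to \Mult(X)_{(k,r)}$ for $k>1$ is \emph{not} classical, and is exactly the step that was omitted in~\cite{cavanna17when}. Your appeal to ``the projection lemma (or equivalently the nerve theorem in its homotopy-colimit formulation)'' does not close this gap. That lemma, as usually stated, says that for a good open cover $U=\{U_i\}_{i\in I}$ of a paracompact space, the natural map $\hocolim_{P^U}D^U\to\colim_{P^U}D^U$ is an equivalence, where $P^U$ is the \emph{opposite face poset of the nerve of $U$}. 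Your indexing category $P_{(k,r)}^{\op}$ is not of this form: it is the poset of $(\geq k-1)$-dimensional simplices of $\Cech(X)_r$, not the face poset of the nerve of the cover $\{V_\sigma^r\}_{\sigma\in P_{(k,r)}}$ of $\Mult(X)_{(k,r)}$. The latter nerve has vertices indexed by $P_{(k,r)}$ and higher simplices corresponding to finite collections $\{\sigma_1,\ldots,\sigma_m\}\subset P_{(k,r)}$ with $\bigcap_j V_{\sigma_j}^r\neq\emptyset$; this is a different (and larger) shape.

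The paper fills this gap with a homotopy finality argument. One restricts the cover to the minimal elements, taking $U^k=\{V_\sigma^r:|\sigma|=k\}$, for which the standard partition-of-unity argument does give $\hocolim_{P^{U^k}}D^{U^k}\simeq\Mult(X)_{(k,r)}$. One then considers the functor $F\colon P^{U^k}\to P_{(k,r)}^{\op}$ sending a collection $\{\sigma_1,\ldots,\sigma_m\}$ of size-$k$ subsets to their union $\sigma_1\cup\cdots\cup\sigma_m$; this satisfies $D^{U^k}=F_{(k,r)}\circ F$, and one checks that $F$ is homotopy final (for each $\tau\in P_{(k,r)}^{\op}$, the category $\tau\downarrow F$ has a minimum element, namely the set of all size-$k$ subsets of $\tau$, hence contractible nerve). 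Then $F_*$ is an equivalence of homotopy colimits, and $\rho^2$ is the composite of $F_*^{-1}$ with the known equivalence. Your observation that $V_{\sigma_1}^r\cap\cdots\cap V_{\sigma_m}^r=V_{\sigma_1\cup\cdots\cup\sigma_m}^r$ is the key ingredient here, but it needs to be deployed via this finality argument (or an equivalent one) rather than invoked as ``the projection lemma.''
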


We prove \cref{Thm:Multicover}\,(i) in~\cref{Sec:Multicover_Nerve}, following~\cite{cavanna17when}.  For $X$ finite, $\nSCech(X)$ and $\nMult(X)$ are obtained from $\SCech(X)$ and $\Mult(X)$ by the same reparameterization, so \cref{Thm:Multicover}\,(ii) follows immediately
from (i).

\begin{proof}[Proof of \cref{Thm:Multicover_and_Subdivision_Cech_Stability}\,(iii)]
Our proof is an adaptation of the original proof of the Rips stability
theorem \cite{chazal2009gromov}.  For any
$\delta>\GPr(\mu_X,\mu_Y)$, there exists a finite metric space $Z$ and
a pair of isometric embeddings $\varphi\colon X\to Z$, $\psi \colon Y\to Z$
such that $\Pro(\varphi_*(\mu_X), \psi_*(\mu_Y))<\delta$.  Let $\R^{Z}$ denote the metric space of functions $Z\to \R$, with the
sup norm metric, and let $K\colon Z\to \R^Z$ be the \emph{Kuratowski embedding}, defined by $K(z)(y)=d_Z(y,z)$.  $K$ is an isometric embedding.  We write \[X'=K\circ \varphi(X)\subset \R^Z\qquad\textup{and}\qquad Y'=K\circ \psi(Y)\subset \R^Z.\]  The Rips and \v Cech complexes of sets embedded in $\R^{Z}$ are identical, so we have 
\[
\nSRips(X)=\nSCech(X') \qquad \textrm{and} \qquad \nSRips(Y)=\nSCech(Y').
\]
Moreover, we have that
\[\Pro(\nu_{X'},\nu_{Y'})=\Pro(\varphi_*(\mu_X), \psi_*(\mu_Y))<\delta.\]
Hence, by \cref{Thm:Multicover_and_Subdivision_Cech_Stability}\,(ii),
$d_{HI}(\nSCech(X'),\nSCech(Y'))<\delta$.  The result follows. 
\end{proof}

\subsection{Stability of the Degree Bifiltrations}\label{Sec:Stability_of_Degree_Bifiltrations}

Next, we prove \cref{Thm:Degree_Rips_Stab}, our stability result for
the degree bifiltrations, using the results of the
previous section.  To start, we apply the multicover nerve theorem
(\cref{Thm:Multicover_and_Subdivision_Cech_Stability}\,(ii)) to show
that the subdivision-\v Cech and degree-\v Cech bifiltrations are
homotopy interleaved.

Recalling Example~\ref{ex:forwardflow}, for $\delta\geq 0$ we define
the forward shift 
\[
\begin{aligned}
\gamma^\delta \colon \R^{\op} \times \R \to \R^{\op} \times \R, \\
\gamma^\delta(k,r)=(k-\delta,3r+\delta).
\end{aligned}
\]

Let us now recall the statement of \cref{Thm:Degree_Rips_Stab}:
\begin{reptheorem}{Thm:Degree_Rips_Stab}
    \mbox{}
\begin{enumerate}
\item[(i)] If $X$ and $Y$ are non-empty finite subsets of a good metric space, then
  $\nDCech(X)$ and $\nDCech(Y)$ are $\gamma^\delta$-homotopy
  interleaved for all $\delta>\Pro(\nu_X,\nu_Y),$
\item[(ii)] If $X$ and $Y$ are non-empty finite metric spaces, then  $\nDRips(X)$ and $\nDRips(Y)$ are
  $\gamma^\delta$-homotopy interleaved for all $\delta>\GPr(\mu_X,\mu_Y).$
\end{enumerate}    
\end{reptheorem}

In what follows, we let $\Id$ denote the identity function on
$\R^{\op}\times \R$, and we write $\gamma^0$ simply as $\gamma$.

\begin{proposition}\label{Prop:Degree_Cech_vs_Subdivision_Cech}
For $X$ a non-empty, finite subset of a good metric space, $\nDCech(X)$ and $\nSCech(X)$ are $(\gamma,\Id)$-homotopy interleaved.
\end{proposition}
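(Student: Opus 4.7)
My proof plan introduces an intermediate bifiltration of subspaces of the ambient good metric space $Z$, which plays the same role for $\nDCech(X)$ that $\nMult(X)$ plays for $\nSCech(X)$, and then exhibits a strict $(\gamma,\Id)$-interleaving between this subspace bifiltration and $\nMult(X)$; combined with two nerve equivalences, this yields the required homotopy interleaving.

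First, set $V^r_j := \{x \in X : \deg_{\Cech(X)_r^{(1)}}(x) \geq j-1\}$ and define the \emph{degree-cover} bifiltration
\[
\DCov(X)_{(k,r)} := \bigcup_{x \in V^r_{k|X|}} B(x, r).
\]
Since $V^r_{k|X|}$ grows monotonically as $(k,r)$ increases in $\OurPoset$, and so do the radii, this is a functor $\DCov(X) \colon \OurPoset \to \Top$ with inclusions as structure maps. By construction, $\nDCech(X)_{(k,r)}$ is exactly the nerve of the cover of $\DCov(X)_{(k,r)}$ by the balls $\{B(x,r) : x \in V^r_{k|X|}\}$. Because $Z$ is good, finite intersections of these balls are empty or contractible, and a persistent nerve argument of the same type used to prove the multicover nerve theorem in \cref{Sec:Multicover_Nerve} should then yield a weak equivalence of bifiltrations $\nDCech(X) \simeq \DCov(X)$.

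The next step is to verify the two functorial inclusions of subspaces of $Z$,
\[
\nMult(X)_{(k, r)} \;\subseteq\; \DCov(X)_{(k,r)} \;\subseteq\; \nMult(X)_{(k, 3r)}.
\]
For the left inclusion, given $y \in \nMult(X)_{(k,r)}$, pick any $x \in B(y,r) \cap X$: every other point of $B(y,r)\cap X$ is joined to $x$ by an edge in $\Cech(X)_r^{(1)}$, with $y$ itself witnessing the required ball intersection, so $x \in V^r_{k|X|}$ and $y \in B(x,r) \subseteq \DCov(X)_{(k,r)}$. For the right inclusion, given $y \in B(x,r)$ with $x \in V^r_{k|X|}$, every neighbor $x'$ of $x$ in $\Cech(X)_r^{(1)}$ satisfies $d(y,x')<3r$ by the triangle inequality, so $B(y,3r) \cap X$ contains $x$ together with its at least $k|X|-1$ neighbors, giving $y \in \nMult(X)_{(k,3r)}$. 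Together with the inclusions already built into the bifiltrations $\DCov(X)$ and $\nMult(X)$, these assemble into a strict $(\gamma,\Id)$-interleaving between $\DCov(X)$ and $\nMult(X)$.

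Composing this strict interleaving with the weak equivalence $\nDCech(X) \simeq \DCov(X)$ just established and with the multicover nerve theorem's equivalence $\nSCech(X) \simeq \nMult(X)$ (\cref{Thm:Multicover}) then produces the desired $(\gamma,\Id)$-homotopy interleaving between $\nDCech(X)$ and $\nSCech(X)$. I expect the main obstacle to lie in the persistent nerve argument: the pointwise nerve theorem is standard for good ambient spaces, but upgrading it to a weak equivalence of bifiltrations requires simultaneously tracking the varying index set of balls (controlled by $k$) and the varying radii (controlled by $r$), in the same spirit as the argument carried out in \cref{Sec:Multicover_Nerve} for the multicover bifiltration.
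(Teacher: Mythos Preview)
Your proposal is correct and follows essentially the same route as the paper: the paper introduces the very same intermediate bifiltration $\DCov(X)$, establishes the weak equivalence $\nDCech(X)\simeq\DCov(X)$ via the persistent nerve theorem, proves the two inclusions $\nMult(X)_{(k,r)}\subset\DCov(X)_{(k,r)}\subset\nMult(X)_{(k,3r)}$ by exactly the triangle-inequality arguments you give, and then invokes \cref{Thm:Multicover}. Your only overcaution is the anticipated obstacle: the ordinary persistent nerve theorem (\cref{Thm:Persistent_Nerve}), not the multicover version, already suffices for $\nDCech(X)\simeq\DCov(X)$, since for each $x\in X$ the assignment $(k,r)\mapsto B(x,r)$ if $x\in V^r_{k|X|}$ and $\emptyset$ otherwise is functorial in $(k,r)$.
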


\begin{proof}
Our proof uses the \emph{persistent nerve theorem} \cite{chazal2008towards}, a standard variant of the nerve theorem which we discuss in the following section; see \cref{Thm:Persistent_Nerve}.  Let $(\AmbientSpace,\partial_{\AmbientSpace})$ denote the ambient good metric space, and consider the
bifiltration 
\[
\begin{aligned}
&\DCov(X) \colon \OurPoset\to \Top, \\
&\DCov(X)_{(k,r)} =\{y\in \AmbientSpace\mid d(y,x)<r\textup{ for some $x\in X$ a
  vertex of $\nDCech_{(k,r)}$}\}.
\end{aligned}
\]
By the persistent nerve theorem, $\DCov(X)$ is weakly equivalent to
$\nDCech(X)$.   Thus, by \cref{Thm:Multicover}\,(ii) it suffices to show that
$\DCov(X)$ and $\nMult(X)$ are $(\gamma,\Id)$-interleaved.

If $x\in \DCov(X)_{(k,r)}$, then there exists a point $p\in X$ with
$\partial_{\AmbientSpace}(x,p)<r$, and a subset $W\subset X$ of size at least $k|X|$ such that
 $\partial_{\AmbientSpace}(p,w)<2r$ for all $w\in W$.  By the triangle inequality,
$\partial_{\AmbientSpace}(x,w)<3r$ for all $w\in W$, so $x\in \nMult(X)_{(k,3r)}$.  This shows
that 
\[
\DCov(X)_{(k,r)}\subset \nMult(X)_{(k,3r)}.
\]  
Conversely, if $x\in \nMult(X)_{(k,r)}$ then there is a subset
$W\subset X$ of size at least $k|X|$ such that $\partial_{\AmbientSpace}(x,w)<r$ for all $w\in W$.  Then, for any $w,w'\in W$,
$\partial_{\AmbientSpace}(w,w')<2r$ by the triangle inequality, so each element of $W$ is a vertex of
$\nDCech_{(k,r)}$. Thus, $x\in \DCov_{(k,r)}$.  This shows that \
\[
\nMult(X)_{(k,r)}\subset \DCov(X)_{(k,r)}.
\]
It follows that that $\DCov(X)$ and $\nMult(X)$ are indeed
$(\gamma,\Id)$-interleaved, with the interleaving maps the
inclusions.
\end{proof}

\begin{corollary}\label{Cor:DRips_And_SRips}
For any non-empty, finite metric space $X$, $\nDRips(X)$ and $\nSRips(X)$ are
$(\gamma,\Id)$-homotopy interleaved.
\end{corollary}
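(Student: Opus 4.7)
The strategy is the same one used in the proof of \cref{Thm:Multicover_and_Subdivision_Cech_Stability}\,(iii): reduce the Rips case to the \v Cech case via a Kuratowski embedding into a good ambient metric space. Specifically, I would let $K\colon X\to \R^X$ denote the Kuratowski embedding, where $\R^X$ is endowed with the sup-norm metric, and set $X'=K(X)$. Since $K$ is an isometric embedding, $\Rips(X)=\Rips(X')$ as filtrations, and since $\R^X$ with the sup norm is a good metric space, \cref{Prop:Degree_Cech_vs_Subdivision_Cech} applies to $X'$.

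The next step is to observe that for any subset of $\R^X$ with the sup-norm metric, the Rips and \v Cech filtrations coincide. This relies on the standard Helly-type fact for $\ell^\infty$ that a collection of open balls of equal radius has a nonempty common intersection whenever the balls pairwise intersect. Consequently, $\Rips(X')=\Cech(X')$ as functors out of $(0,\infty)$. The subdivision and degree constructions are defined functorially on simplicial complexes and their inclusions, so applying $\Sd(-)$ and $\Deg(-)$ to both sides yields equalities of bifiltrations
\[
\nSRips(X)=\nSCech(X')\qquad\text{and}\qquad \nDRips(X)=\nDCech(X').
\]

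Finally, applying \cref{Prop:Degree_Cech_vs_Subdivision_Cech} to the finite subset $X'$ of the good metric space $\R^X$ shows that $\nDCech(X')$ and $\nSCech(X')$ are $(\gamma,\Id)$-homotopy interleaved. Transporting this homotopy interleaving along the two equalities above gives that $\nDRips(X)$ and $\nSRips(X)$ are $(\gamma,\Id)$-homotopy interleaved, as required.

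I do not expect any serious obstacle. The only non-formal input is the equality $\Rips(X')=\Cech(X')$ for $X'\subset \R^X$, which is a well-known consequence of the Helly property for $\ell^\infty$ balls; everything else is an application of \cref{Prop:Degree_Cech_vs_Subdivision_Cech} together with functoriality of $\Sd$ and $\Deg$ with respect to monomorphisms of simplicial complexes.
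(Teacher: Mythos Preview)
Your proposal is correct and matches the paper's proof essentially verbatim: embed $X$ isometrically into $\R^X$ with the sup norm via the Kuratowski embedding, use that Rips and \v Cech complexes coincide there so that $\nDRips(X)=\nDCech(X')$ and $\nSRips(X)=\nSCech(X')$, and then invoke \cref{Prop:Degree_Cech_vs_Subdivision_Cech}. Your added remark about the Helly property for $\ell^\infty$ balls is the right justification for the Rips$=$\v Cech step, which the paper leaves implicit.
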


\begin{proof}
Without loss of generality, we may regard  
$X$ as a subset of $\R^X$ via the Kuratowski embedding; see the proof of \cref{Thm:Multicover_and_Subdivision_Cech_Stability}\,(iii).  For point
sets in $\R^X$, Rips and \v Cech complexes are equal, so 
\[
\nDRips(X)=\nDCech(X) \qquad \textrm{and} \qquad \nSRips(X)=\nSCech(X).
\]
The result now follows
from~\cref{Prop:Degree_Cech_vs_Subdivision_Cech}.
\end{proof}

\begin{proof}[Proof of \cref{Thm:Degree_Rips_Stab}]
Item (i) follows immediately from \cref{Thm:Multicover_and_Subdivision_Cech_Stability}\,(ii), \cref{Prop:Degree_Cech_vs_Subdivision_Cech},  and the ``generalized triangle inequality for homotopy interleavings''  (\cref{Prop:Triangle_Inequality}).  Similarly, (ii) follows from \cref{Thm:Multicover_and_Subdivision_Cech_Stability}\,(iii), \cref{Cor:DRips_And_SRips}, and \cref{Prop:Triangle_Inequality}.
\end{proof}

\begin{remark}
One can also prove \cref{Thm:Degree_Rips_Stab}\,(i) without considering the relationship between the degree and subdivision bifiltrations.  This approach avoids use of both the multicover nerve theorem and the generalized triangle inequality for homotopy interleavings; only the usual persistent nerve theorem (\cref{Thm:Persistent_Nerve}) and a generalized triangle inequality for strict interleavings are needed.  \cref{Thm:Degree_Rips_Stab}\,(ii) also can be proven this way.
\end{remark}
The following proposition tells us that the constant 3 in the definition of $\gamma^\delta$ is tight.  For $c\in [1,\infty)$, define the forward shift $\gamma^{\delta,c}$ by 
\[\gamma^{\delta,c}(k,r)=(k-\delta,cr+\delta),\]
and note that $\gamma^\delta=\gamma^{\delta,3}$.

\begin{proposition}\label{Prop:Degree_Tightness}
For any $c\in [1,3)$, neither statement (i) nor (ii) of \cref{Thm:Degree_Rips_Stab} is true if we replace $\gamma^{\delta}$ in the statement with $\gamma^{\delta,c}$.
\end{proposition}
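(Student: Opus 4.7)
\medskip

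\noindent\emph{Proof plan.}  The plan is to construct, for each $c\in[1,3)$, explicit families of pairs $(X_n,Y_n)$ of finite metric spaces (or finite subsets of a good metric space, for item (i)) such that $\Pro(\nu_{X_n},\nu_{Y_n})\to 0$ (resp.\ $\GPr(\mu_{X_n},\mu_{Y_n})\to 0$) while the degree bifiltrations $\nDCech(X_n),\nDCech(Y_n)$ (resp.\ $\nDRips(X_n),\nDRips(Y_n)$) fail to be $\gamma^{\delta,c}$-homotopy interleaved for any $\delta$ smaller than a fixed positive threshold depending on $c$.  The construction is designed to saturate the triangle inequality estimate that produces the factor $3$ in the proof of \cref{Prop:Degree_Cech_vs_Subdivision_Cech}: there, the only inefficiency is the bound $d(x,w)<d(x,p)+d(p,w)<r+2r=3r$, applied when $x$ lies within $r$ of a high-degree vertex $p$ whose degree is witnessed by neighbors $w$ at distance approaching $2r$.

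First I would describe $X_n$ concretely, for a fixed target scale $r_0>0$ and small auxiliary parameter $\epsilon>0$.  Taking the ambient space to be $\R$ (or $\R^2$, via the Kuratowski embedding when needed for (ii)), place a ``probe'' point $x$ at position $0$, a ``hub'' point $p$ at position $r_0-\epsilon$, and a tight cluster $C$ of $n$ points within an $\eta$-neighborhood of the position $3r_0-2\epsilon$, with $\eta\ll\epsilon\ll r_0$.  A direct computation then shows that, with density threshold $k=k_n$ satisfying $2/(n+2)<k_n\le (n+1)/(n+2)$, both $p$ and each cluster point are vertices of $\nDCech(X_n)_{(k_n,r_0)}$ while $x$ is not; thus in the cover $\DCov(X_n)_{(k_n,r_0)}$ (weakly equivalent to $\nDCech(X_n)_{(k_n,r_0)}$ by the persistent nerve theorem), the probe $x$ is covered through $p$.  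On the other hand, $x\notin\nMult(X_n)_{(k_n,r')}$ until $r'>3r_0-2\epsilon$, since the only points of $X_n$ at distance $\le 3r_0-3\epsilon$ from $x$ are $x$ and $p$.  By the multicover nerve theorem, the same gap persists between $\nDCech(X_n)$ and $\nSCech(X_n)$.

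Next, I would take $Y_n$ to be a perturbation of $X_n$ of size bounded independently of $n$ (so $\GPr(\mu_{X_n},\mu_{Y_n})=O(1/n)$), chosen so as to destroy the feature above.  A natural choice is $Y_n=X_n\setminus\{p\}\cup\{p'\}$, where $p'$ is an extra cluster point (preserving $|X_n|$); equivalently, one can remove $p$ outright and adjust the normalization.  In $\nDCech(Y_n)_{(k_n,r)}$, the probe $x$ has no high-degree neighbor until $r$ approaches essentially $3r_0/2$, because the only candidates for such a neighbor now lie at the cluster itself.  With this setup, any $\gamma^{\delta,c}$-homotopy interleaving for $\delta<(3-c)r_0/(2c)$ (say) would, after passing to a strict representative via the persistent nerve theorem and the weak equivalence $\nDCech\simeq\DCov$, produce a map from $\DCov(X_n)_{(k_n,r_0)}$ into $\DCov(Y_n)_{(k_n-\delta,cr_0+\delta)}$ commuting with the structure maps on both sides.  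A $\pi_0$-level obstruction argument, tracking the image of the component of $\DCov(X_n)_{(k_n,r_0)}$ containing $x$, contradicts this, yielding the desired non-interleaving.  Part (ii) is handled by the same construction, using the equality of Rips and \v Cech complexes for subsets of $\R^X$ under the Kuratowski embedding, as in the proof of \cref{Thm:Multicover_and_Subdivision_Cech_Stability}(iii).

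The hardest step will be the last one: ruling out \emph{all} $\gamma^{\delta,c}$-homotopy interleavings, not merely a canonical candidate, in the presence of zig-zags of weak equivalences.  The point is that on the space level a homotopy interleaving is only a zig-zag of objectwise weak equivalences bracketing a strict interleaving of auxiliary functors, so the non-existence claim must be insensitive to replacement by weakly equivalent bifiltrations.  The plan is to reduce to a homological obstruction by applying $H_0$ and using \cref{Proposition:Homotopy_to_Homology_Interleavings}, which turns the hypothetical homotopy interleaving into a strict interleaving of persistence modules in $\Vect$; then verifying by direct inspection that the induced map $H_0(\nDCech(X_n))_{(k_n,r_0)}\to H_0(\nDCech(Y_n))_{(k_n-\delta,cr_0+\delta)}$ sending the class of $x$ cannot be made compatible with the structure maps completes the proof.
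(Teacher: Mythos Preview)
Your proposed $H_0$ obstruction does not exist.  At the index $(k_n,r_0)$, the complex $\nDCech(X_n)_{(k_n,r_0)}$ has vertex set $\{p\}\cup C$ and is connected (every cluster point is adjacent to $p$), so $H_0\cong K$; and $\nDCech(Y_n)_{(k_n-\delta,\,cr_0+\delta)}$ has vertex set the cluster, which spans a simplex, so again $H_0\cong K$.  The round-trip $K\to K\to K$ is the identity structure map, which factors through $K$ with no constraint.  ``The class of $x$'' is simply the generator of this one-dimensional space; there is nothing about the structure maps preventing it from being sent to the generator on the other side.  More broadly, every space in either bifiltration is (via $\DCov$) an open subset of $\R$ and hence has trivial $H_i$ for $i\ge 1$, so $\pi_0$ is the only invariant in play; but the regions of the $(k,r)$-plane on which the $\pi_0$-modules of $X_n$ and $Y_n$ differ are confined to $k$-slivers of width $O(1/n)$, the same order as $\Pro(\nu_{X_n},\nu_{Y_n})$.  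Thus an ordinary $\tau^\delta$-interleaving already exists for $\delta$ slightly larger than the Prohorov distance, and a fortiori a $\gamma^{\delta,c}$-interleaving exists for every $c\ge 1$.  The tight cluster is what dooms the construction: it guarantees that high-degree vertices persist in $Y_n$ at all scales, so removing the hub $p$ cannot create the emptiness you need.

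The paper's proof proceeds quite differently.  A warm-up argument, valid only for $c\in[1,2)$, takes $Y=\{r\mathbf e_1,r\mathbf e_2,r\mathbf e_3\}\subset(\R^3,\ell_1)$ and $Z=Y\cup\{0\}$: the origin has degree $3$ in $\Rips(Z)_{r/2+\epsilon}$ while the satellites are mutually at distance $2r$ and hence isolated in $\Rips(Y)_s$ for $s\le r$, giving a genuine emptiness obstruction in $H_0$---but only when $c(r/2)+\delta< r$, i.e.\ $c<2$.  To reach the full range $c\in[1,3)$ the paper builds a far more elaborate example in $(\R^{302},\ell_1)$: one hundred scaled copies of the $3$-point configuration arranged around a discrete square cycle, together with one hundred central ``hub'' points lying on that cycle.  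The obstruction is now in $H_1$: the hubs form a persistent circle in one degree-Rips bifiltration, and one must show that the corresponding structure map in $H_1$ is nonzero at indices separated by $(\gamma^{\delta,c})^2$, which requires Adamaszek's computation of the homotopy type of the clique complexes of the circular graphs $C_{100}^k$.  No $\pi_0$-level argument is known (or expected) to reach $c<3$.
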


We prove this proposition in Appendix~\ref{sec:tightness}.  Moreover, we have the following:

\begin{remark}[Discontinuity of the Degree Bifiltrations]\label{Rmk:Discontinuity}
The map sending a finite metric space to its normalized degree-Rips bifiltration is discontinuous with respect to the Gromov--Prohorov distance on uniform measures and the homotopy interleaving distance.  To see this, let $Y=\{-1,1\}\subset \R$.  For $n\in \mathbb N$, let $Z^n\subset \R$ be any set consisting of $n$ points in the interval $[-1-\frac{1}{n},-1+\frac{1}{n}]$ and $n$ points in the interval $[1-\frac{1}{n},1+\frac{1}{n}]$, and let $Y^n=Z^n\cup \{0\}$.  It is easy to check that $\mu_{Y^n}$ converges to $\mu_{Y}$ in the Gromov--Prohorov metric as $n\to \infty$.  However, a simple calculation similar to one appearing in the proof of \cref{Prop:Degree_Tightness} shows that $H_0(\nDRips(Y^n))$ does not converge to $H_0(\nDRips(Y))$ in the interleaving distance.  Hence, $\nDRips(Y^n)$ does not converge to $\nDRips(Y)$ in the homotopy interleaving distance, which establishes the claimed discontinuity.  

The same example also gives that the map sending a finite subspace of a good metric space to its normalized degree-\v Cech bifiltration is discontinuous with respect to the Prohorov distance on normalized measures and the homotopy interleaving distance.
For this, it suffices to note that $\nu_{Y^n}$ also converges to $\nu_{Y}$ in the Prohorov metric as $n\to \infty$, and that the degree-Rips and degree-\v Cech bifiltrations have the same 1-skeleta, hence isomorphic homology in degree 0.
\end{remark}

Harker et al.\ have introduced a variant of the standard Rips stability theorem (\cref{Thm:Classical_Stability}\,(ii)) for the case where one metric space is a subset of the other \cite[Proposition 5.6]{harker2019comparison}.  We now give an analogous variant of \cref{Thm:Degree_Rips_Stab}, which we will use in \cref{Sec:RIVET_Point_Clouds}.  \cref{Thm:Multicover_and_Subdivision_Cech_Stability} also admits an analogous variant, which we do not write out.

For $C\in (0,1]$, define the forward shift $\kappa^{C}$ by $\kappa^C(k,r)=(Ck,r)$.
\begin{proposition}\label{Prop:Degree_Subspaces} 
\mbox{}
\begin{enumerate}
\item[(i)] If $X\subset Y$ are non-empty, finite subsets of a good metric space, then
  $\nDCech(X)$ and $\nDCech(Y)$ are  $(\kappa^{|X|/|Y|},\gamma^\delta)$-homotopy
  interleaved for all $\delta>\Pro(\nu_X,\nu_Y)$.
\item[(ii)] If $X\subset Y$ are non-empty, finite metric spaces, then $\nDRips(X)$ and $\nDRips(Y)$ are $(\kappa^{|X|/|Y|},\gamma^\delta)$-homotopy interleaved for all $\delta>\Pro(\mu_X,\mu_Y)$.
\end{enumerate}
\end{proposition}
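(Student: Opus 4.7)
The plan is to adapt the proofs of \cref{Prop:Degree_Cech_vs_Subdivision_Cech} and \cref{Thm:Degree_Rips_Stab}\,(i), but to work directly with the space-level $\DCov$ bifiltration. This lets us exploit the ambient subspace inclusion $X \hookrightarrow Y$ and avoid the extra factor of $3$ in the $\kappa^C$-direction that would arise from routing through $\nSCech$ and the generalized triangle inequality.

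For part (i), let $C = |X|/|Y|$ and fix $\delta > \Pro(\nu_X,\nu_Y)$. I construct a strict $(\kappa^C,\gamma^\delta)$-interleaving of $\DCov(X)$ and $\DCov(Y)$ using only subspace inclusions in the ambient good metric space. In the $\kappa^C$-direction, if $y \in \DCov(X)_{(k,r)}$ is witnessed by $p \in X$ with $\partial(y,p) < r$ and degree at least $k|X|-1$ in $\Cech(X)_r$, then the monotonicity of vertex degree under $X \subset Y$ makes $p$ a vertex of $\nDCech(Y)_{(Ck,r)}$, so $y \in \DCov(Y)_{\kappa^C(k,r)}$. In the $\gamma^\delta$-direction, I chain the inclusions from the proof of \cref{Prop:Degree_Cech_vs_Subdivision_Cech} with the measure stability bound of \cref{Thm:Measure_Stability}:
\[\DCov(Y)_{(k,r)} \subset \nMult(Y)_{(k,3r)} \subset \nMult(X)_{(k-\delta,3r+\delta)} \subset \DCov(X)_{\gamma^\delta(k,r)}.\]
All four shift maps are subspace inclusions in a common ambient space, so the interleaving diagrams commute on the nose. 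The persistent nerve theorem, invoked exactly as in \cref{Prop:Degree_Cech_vs_Subdivision_Cech}, gives $\DCov(X) \simeq \nDCech(X)$ and $\DCov(Y) \simeq \nDCech(Y)$, promoting the strict interleaving to a $(\kappa^C,\gamma^\delta)$-homotopy interleaving of the degree-\v Cech bifiltrations.

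Part (ii) follows from (i) via the Kuratowski embedding $K \colon Y \hookrightarrow \R^Y$, as in the proof of \cref{Thm:Multicover_and_Subdivision_Cech_Stability}\,(iii): Rips and \v Cech complexes agree on finite subsets of $\R^Y$ under the sup norm, $\R^Y$ is a good metric space, $K$ is isometric so preserves $\Pro$, and $K(X) \subset K(Y)$.

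The main subtlety is the $\gamma^\delta$-direction of the interleaving: the factor of $3$ is forced by routing through $\nMult$ as in \cref{Prop:Degree_Cech_vs_Subdivision_Cech}, and the shift by $\delta$ in both coordinates comes from \cref{Thm:Measure_Stability}. The asymmetry of the proposition -- the sharper $\kappa^C$ with no rescaling of $r$ in the $X\to Y$ direction -- is precisely the payoff of making that direction by a direct subspace inclusion rather than a triangle-inequality composition.
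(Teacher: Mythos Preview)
Your proof is correct and is a faithful fleshing-out of the paper's one-sentence sketch (``proven in essentially the same way as \cref{Thm:Degree_Rips_Stab}, using the inclusion $X\hookrightarrow Y$''). The paper gives no further details, so there is nothing to compare at a fine-grained level.

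One remark worth recording: your decision to work directly at the $\DCov$/$\nMult$ level is not merely a stylistic preference but is essentially forced. If one instead tries to literally mimic the proof of \cref{Thm:Degree_Rips_Stab}---i.e., compose the $(\gamma,\Id)$-homotopy interleaving between $\nDCech$ and $\nSCech$ with an asymmetric $(\kappa^C,\tau^\delta)$-homotopy interleaving between $\nSCech(X)$ and $\nSCech(Y)$ via \cref{Prop:Triangle_Inequality}---the composite shift in the $X\to Y$ direction is $\kappa^C\circ\gamma$, which sends $(k,r)$ to $(Ck,3r)$ rather than $(Ck,r)$. So the naive triangle-inequality route loses exactly the sharpening that the proposition is meant to record. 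Your approach bypasses this by making the $\kappa^C$ leg a strict subspace inclusion $\DCov(X)_{(k,r)}\subset\DCov(Y)_{(Ck,r)}$, chaining the $\gamma^\delta$ leg through $\nMult$ as in the proof of \cref{Prop:Degree_Cech_vs_Subdivision_Cech}, and only invoking the persistent nerve theorem at the end. This is precisely the alternative route the paper alludes to in the remark following the proof of \cref{Thm:Degree_Rips_Stab}.
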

Evidently, the hypotheses of \cref{Prop:Degree_Subspaces} are stronger than those of \cref{Thm:Degree_Rips_Stab}, and using \cref{Prop:Interleaving_Monotonicity},  it can be checked that the conclusions are stronger as well.

\cref{Prop:Degree_Subspaces} is proven in essentially the same way as \cref{Thm:Degree_Rips_Stab}, using the inclusion $X\hookrightarrow Y$ in a straightforward way to strengthen the result; we omit the details.

\subsection{Weak Law of Large Numbers for Subdivision-\v Cech Bifiltrations}\label{Sec:Convergence_of_SC}
 Let $\X=(\X,\partial_\X,\eta_\X)$ be a separable metric probability
 space (\cref{Def:MM_Space}).  To conclude this section, we apply
 \cref{Thm:Measure_Stability} to show that the subdivision-\v Cech
 bifiltration of an i.i.d. sample of $\X$ 
 converges almost surely in the homotopy
 interleaving distance to the multicover bifiltration of $\X$, as the
 sample size tends to $\infty$.

We begin with some notation: Let $x_1,x_2,\ldots$ be a sequence of independent random variables taking values in $\X$, each having law $\eta_\X$; for $m\geq 1$, let $X_m=\{x_1,\ldots,x_m\}$.   Let $\eta_m$ denote the empirical distribution of $X_m$; that is, using \cref{Not:Counting_Measures}, $\eta_m=\nu_{X_m}.$

We will use the following standard theorem about convergence of empirical measures.

\begin{theorem}[{\cite[Theorem 11.4.1]{dudley2018real}}]\label{Thm:Convergence_of_Emperical_Measures}
Almost surely, the empirical distributions $\eta_m$ weakly converge to $\eta_\X$ as $m\to \infty$.  
\end{theorem}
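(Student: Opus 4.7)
The plan is to combine the strong law of large numbers (SLLN) with the fact that, on a separable metric space, weak convergence of probability measures is determined by countably many test functions.

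First, fix any bounded continuous $f\colon \X\to \R$. The random variables $f(x_1),f(x_2),\ldots$ are i.i.d.\ with finite mean $\int f\, d\eta_\X$, so by the SLLN,
\[
\int f\, d\eta_m \;=\; \frac{1}{m}\sum_{i=1}^m f(x_i) \;\longrightarrow\; \int f\, d\eta_\X
\]
almost surely. The exceptional null set depends on $f$, so one cannot immediately intersect over all $f\in C_b(\X)$, which is uncountable.

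The key step is to replace $C_b(\X)$ by a countable subfamily $\mathcal{F}\subset C_b(\X)$ that still detects weak convergence: namely, if $\int g\, d\mu_m\to \int g\, d\mu$ for every $g\in \mathcal{F}$, then $\mu_m\to \mu$ weakly. Separability of $\X$ makes this possible; concretely, one fixes a countable dense subset $\{z_n\}\subset \X$ and a countable set of positive rationals $\{q_j\}$, and takes $\mathcal{F}$ to consist of the Lipschitz ``tent'' functions $x\mapsto \max(0,1-q_j\,\partial_\X(x,z_n))$ together with finite products thereof, all rescaled to lie in $[0,1]$. By a standard Portmanteau-style argument, integrals against such functions approximate $\mu(U)$ for $U$ ranging over a convergence-determining subbase of open sets, so $\mathcal{F}$ is indeed convergence-determining. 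Applying the SLLN to each of the countably many $g\in \mathcal{F}$ and intersecting the resulting full-measure events yields a single full-measure event on which $\int g\, d\eta_m\to \int g\, d\eta_\X$ for every $g\in \mathcal{F}$; by the defining property of $\mathcal{F}$, this implies $\eta_m\to \eta_\X$ weakly almost surely.

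The main technical obstacle is exhibiting the countable convergence-determining family $\mathcal{F}$ on an arbitrary separable metric space; this is where separability is used in an essential way, and it is the content of the deeper half of the classical result (see, e.g., \cite[\S 11.4]{dudley2018real}). Once such an $\mathcal{F}$ is in hand, the rest of the argument is a direct combination of the SLLN (applied test function by test function) with a countable union bound, and the remaining details are routine.
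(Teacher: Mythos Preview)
The paper does not prove this theorem at all: it is quoted as a standard result with a citation to Dudley's textbook, and is used as a black box in the proof of \cref{Thm:Cech_Convergence}. So there is no ``paper's own proof'' to compare against.

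That said, your sketch is correct and is essentially the classical argument (the one Dudley gives). The two ingredients are exactly the SLLN applied to each test function and the existence, on a separable metric space, of a countable convergence-determining class in $C_b(\X)$; the latter is the only nontrivial point, and you have identified it correctly. One small quibble: the ``finite products thereof'' clause is unnecessary---the tent functions alone (or, more cleanly, the functions $x\mapsto \max(0,1-\partial_\X(x,F)/\epsilon)$ for $F$ ranging over finite subsets of the countable dense set and rational $\epsilon>0$) already suffice to approximate indicators of open balls from below, and open balls with rational radii centered at dense points form a convergence-determining $\pi$-system. But this is a matter of packaging, not substance.
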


Here is our convergence result.

\begin{theorem}\label{Thm:Cech_Convergence}
The random variables $\nSCech(X_m)$ converge almost surely to $\MB(\eta_X)$ in the homotopy interleaving distance as $m\to \infty$.
\end{theorem}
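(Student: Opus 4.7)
The plan is to combine three ingredients: the almost-sure weak convergence of empirical measures (\cref{Thm:Convergence_of_Emperical_Measures}), the fact that the Prohorov distance metrizes weak convergence on a separable metric space (\cref{Prop:Metrization_Of_Weak_Convergence}\,(i)), and the Lipschitz stability of measure bifiltrations (\cref{Thm:Measure_Stability}), then bridge from $\MB(\eta_m)$ to $\nSCech(X_m)$ via the multicover nerve theorem.

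First I would invoke \cref{Thm:Convergence_of_Emperical_Measures} to conclude that, almost surely, $\eta_m = \nu_{X_m}$ converges weakly to $\eta_\X$ as $m \to \infty$.  Since $\X$ is separable, \cref{Prop:Metrization_Of_Weak_Convergence}\,(i) upgrades this to convergence in the Prohorov distance: $\Pro(\eta_m, \eta_\X) \to 0$ almost surely.  Feeding this into \cref{Thm:Measure_Stability} gives
\[
d_I(\MB(\eta_m), \MB(\eta_\X)) \leq \Pro(\eta_m, \eta_\X),
\]
so $d_I(\MB(\eta_m), \MB(\eta_\X)) \to 0$ almost surely.  Since any strict $\delta$-interleaving is in particular a $\delta$-homotopy interleaving, we have $d_{HI} \leq d_I$, hence $d_{HI}(\MB(\eta_m), \MB(\eta_\X)) \to 0$ almost surely as well.

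Next I would identify $\MB(\eta_m)$ with $\nSCech(X_m)$ up to weak equivalence.  By \cref{Def:Norm-Multicover}, $\MB(\eta_m) = \MB(\nu_{X_m}) = \nMult(X_m)$, and the multicover nerve theorem for bifiltrations (\cref{Thm:Multicover}\,(ii)) provides a weak equivalence $\nMult(X_m) \simeq \nSCech(X_m)$ whenever the ambient metric space is good.  In particular $d_{HI}(\nSCech(X_m), \MB(\eta_m)) = 0$, and the generalized triangle inequality for homotopy interleavings (\cref{Prop:Triangle_Inequality}, applied with identity shifts) then yields
\[
d_{HI}(\nSCech(X_m), \MB(\eta_\X)) \leq d_{HI}(\nSCech(X_m), \MB(\eta_m)) + d_{HI}(\MB(\eta_m), \MB(\eta_\X)) \to 0
\]
almost surely, which is what we want.

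The main subtlety is the goodness hypothesis required by \cref{Thm:Multicover}.  If $\X$ itself is not good, one reduces to the good case via the isometric Kuratowski embedding $\X \hookrightarrow \R^{\X}$, exactly as in the proof of \cref{Thm:Multicover_and_Subdivision_Cech_Stability}\,(iii): in the embedded picture the subdivision-\v Cech bifiltration of $X_m$ coincides with $\nSRips(X_m)$, the ambient $\R^{\X}$ is good, and because pushforward along an isometry can only decrease the Prohorov distance, the convergence argument above goes through without change.
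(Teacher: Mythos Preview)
Your core argument is essentially identical to the paper's proof: invoke \cref{Thm:Convergence_of_Emperical_Measures} and \cref{Prop:Metrization_Of_Weak_Convergence}\,(i) to get Prohorov convergence of $\eta_m$ to $\eta_\X$, apply \cref{Thm:Measure_Stability} to get $d_I(\MB(\eta_m),\MB(\eta_\X))\to 0$, identify $\MB(\eta_m)=\nMult(X_m)$, and then use the weak equivalence $\nSCech(X_m)\simeq \nMult(X_m)$ from \cref{Thm:Multicover}\,(ii) to conclude.

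Your final paragraph goes beyond the paper. You correctly observe that \cref{Thm:Multicover} requires the ambient space to be good, an assumption the paper's statement of \cref{Thm:Cech_Convergence} does not make explicit; the paper simply applies \cref{Thm:Multicover}\,(ii) without comment. However, your proposed workaround via the Kuratowski embedding does not actually rescue the stated theorem. The \v Cech construction depends on the ambient space: passing to $\R^{\X}$ replaces $\nSCech(X_m)$ (computed in $\X$) by the \v Cech bifiltration computed in $\R^{\X}$, which equals $\nSRips(X_m)$. So your embedding argument would prove convergence of $\nSRips(X_m)$, not of the original $\nSCech(X_m)$. If $\X$ is not good, there is no reason these should agree, and the theorem as literally stated appears to need the goodness hypothesis.
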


\begin{proof}[Proof of \cref{Thm:Cech_Convergence}]
By \cref{Thm:Convergence_of_Emperical_Measures} and \cref{Prop:Metrization_Of_Weak_Convergence}\,(i), $\eta_m$ converges to $\eta_\X$ in the Prohorov distance.  Thus, by \cref{Thm:Measure_Stability}, $\nMult(X_m)= \MB(\eta_m)$ converges to $\MB(\eta_\X)$ in the interleaving distance.  Since $\nSCech(X_m)$ is weakly equivalent to $\nMult(X_m)$ by \cref{Thm:Multicover}\,(ii), this implies that $\nSCech(X_m)$ converges to $\MB(\eta_\X)$ in the homotopy interleaving distance.
\end{proof}

\section{The Multicover Nerve Theorem}\label{Sec:Multicover_Nerve}

We now turn to the proof of \cref{Thm:Multicover}\,(i), the multicover
nerve theorem for bifiltrations. The original version of the multicover nerve theorem \cite{sheehy2012multicover} concerns the 1-parameter persistence modules obtained by fixing the parameter $k$ of the multicover bifiltration while varying the parameter $r$.  A subsequent paper by Cavanna, Gardner, and Sheehy gives a different proof of the same result~\cite[Appendix B]{cavanna17when}.  The latter approach centers around the observation
that a proof of the multicover nerve theorem is essentially already
implicit in a standard proof of the nerve theorem, as given
e.g., in~\cite[\S 4.G]{hatcher2002algebraic}.  Just one additional step
is required, where one checks that a certain map of homotopy colimits
is a homotopy equivalence.  This step was in fact omitted
in~\cite{cavanna17when}, but is not too difficult to fill in.

While the proof in ~\cite{sheehy2012multicover} seems to not extend readily to a proof of the result for bifiltrations (\cref{Thm:Multicover}\,(i)), the proof of~\cite{cavanna17when} does extend, modulo the omitted step.  Here, we give a complete proof \cref{Thm:Multicover}\,(i), following \cite{cavanna17when} and filling in the gap.  In fact, we prove a generalization which holds for a wider class of filtered covers (\cref{Thm:General_Multicover_Nerve}), since this is not much more difficult.  Our argument for the omitted step draws on an idea appearing in Sheehy's original proof~\cite{sheehy2012multicover}, and also uses the
standard fact that homotopy final functors induce equivalences on
homotopy colimits~(\cref{Thm:Homotopy_Finality}).

\subsection{Nerves and Homotopy Colimits}

\begin{definition}[Nerves]\mbox{}\label{Def:Diagramatic_Nerve}
\begin{enumerate}[(i)]
\item Let $U$ be a cover of a topological space.  The set of
  finite subsets of $U$ with non-empty common intersection define a
  simplicial complex, called the \emph{nerve} of $U$ and denoted
  $N(U)$.
\item The \emph{nerve} of a poset $P$, denoted $N(P)$, is the simplicial
  complex whose $k$-simplices are chains $p_0<p_1<\cdots < p_k$ of
  elements in $P$, with the face relation given by removing elements
  in a chain. 
\end{enumerate}
\end{definition}

We record a few simple facts about nerves of posets that we will use in the
proof the multicover nerve theorem.

\begin{proposition}\label{Prop:Min_Elt_Contractible}\label{Prop:Subdivision}\mbox{}
\begin{enumerate}[(i)]
\item For $P$ the face poset of a simplicial complex $S$, we have
  $N(P)= \Bary(S)$.  
\item For any poset $P$, $N(P)$ and $N(P^{\op})$ are canonically isomorphic.
\item If a poset $P$ has a minimum element, then $N(P)$ is contractible. 
\end{enumerate}
\end{proposition}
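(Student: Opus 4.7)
For (i), the plan is to simply unpack the definitions. The face poset $P$ of a simplicial complex $S$ has as elements the (non-empty) simplices of $S$, ordered by inclusion, so a chain $p_0 < p_1 < \cdots < p_k$ in $P$ is precisely a flag $\sigma_0 \subsetneq \sigma_1 \subsetneq \cdots \subsetneq \sigma_k$ of simplices in $S$. By \cref{defn:barycentric}, such flags are exactly the $k$-simplices of $\Bary(S)$, and in both $N(P)$ and $\Bary(S)$ the face relation is given by deleting elements from the chain/flag. The identity on the vertex set therefore yields the required isomorphism of simplicial complexes.

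For (ii), I would write down the canonical bijection explicitly. A $k$-simplex of $N(P)$ is a chain $\{p_0 < p_1 < \cdots < p_k\}$ in $P$; the same underlying set, reinterpreted as the chain $\{p_k <^{\op} p_{k-1} <^{\op} \cdots <^{\op} p_0\}$, is a $k$-simplex of $N(P^{\op})$. Since the $k$-simplices of $N(P)$ and $N(P^{\op})$ are determined by their underlying vertex sets, the identity on vertex sets is automatically a simplicial map in both directions, and the two compositions are identities.

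For (iii), let $p_* \in P$ be the minimum element. The plan is to show that $N(P)$ is a simplicial cone with apex $p_*$, from which contractibility follows (via straight-line homotopy to $p_*$ in the geometric realization). Concretely, let $Q = P \setminus \{p_*\}$. Every chain in $P$ either lies entirely in $Q$ or has $p_*$ as its minimum; and conversely, since $p_* < q$ for every $q \in Q$, any chain in $Q$ remains a chain upon prepending $p_*$. This bijection between simplices of $N(Q)$ and simplices of $N(P)$ containing $p_*$ realizes $N(P)$ as the simplicial join $\{p_*\} * N(Q)$, i.e.\ the cone on $N(Q)$, which is contractible.

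None of these steps presents a real obstacle; the only mild subtlety is in (iii), where one must be careful to note that a chain already containing $p_*$ corresponds under the join decomposition to a chain in $Q$ (possibly empty) together with the apex, so that the cone description covers all simplices of $N(P)$.
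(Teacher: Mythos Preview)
Your proposal is correct and follows essentially the same approach as the paper: the paper dismisses (i) and (ii) as trivial (you have simply spelled out the details), and for (iii) it gives exactly your cone argument, observing that $N(P)$ is the cone on $N(P\setminus p)$.
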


\begin{proof}
Items (i) and (ii) are trivial.  To prove (iii), note that when $P$ has a minimum element $p$, $N(P)$ is the cone on
$N(P\setminus \{p\})$ and so is contractible. 
\end{proof}

We will make use of the classical Bousfield--Kan formula for the
homotopy colimit, which we now review.  We will only need to consider
the case where the index category is a poset $P$.  For $\sigma\in
N(P)$ a simplex, let $\sigma_0\in \sigma$ denote the minimum element.
Let $\Delta^k$ denote a $k$-simplex, regarded as a topological space.  The functor 
\[
\hocolim_P\colon \Top^P\to \Top
\]
is defined on objects by 
\[
\hocolim_P F = \left(\bigsqcup_{\sigma\in N(P)} \Delta^k\times
F_{\sigma_0} \right)/\sim,
\]
where for each $\sigma \in N(P)$ and facet of $\tau$ of $\sigma$, the
quotient relation glues $\Delta^k\times F_{\sigma_0}$ to
$\Delta^{k-1}\times F_{\tau_0}$ along $\tau\times F_{\sigma_0}$ via
the map $F_{\sigma_0,\tau_0}$.  The action of $\hocolim_P F$ on morphisms (i.e., natural
transformations) in $\Top^P$ is defined in the obvious way.  The
coordinate projections $\Delta^k\times F_{\sigma_0} \twoheadrightarrow
F_{\sigma_0}$ induce a comparison map $\hocolim_P F\to \colim_P F$, which is usually not a weak equivalence.

The homotopy colimit is homotopy invariant, in the sense of the
following standard result: 

\begin{proposition}[{\cite[\S 14.5]{riehl}, \cite[Theorem 8.3.7]{munson2015cubical}}]\label{Prop:Homotopy_Invariance_Of_Homotopy_Colimit}
Given functors 
\[
F,G \colon P \to \Top
\]
and an objectwise (weak) homotopy equivalence $f\colon F\to G$, 
the induced map 
\[
\hocolim_P(f)\colon \hocolim_P F\to \hocolim_P G
\]
is a (weak) homotopy equivalence.
\end{proposition}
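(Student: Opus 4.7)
The plan is to prove this by an inductive skeletal argument directly on the Bousfield-Kan formula recalled above, combined with the classical gluing lemma for weak equivalences along closed cofibrations. This keeps the argument at the level of point-set topology, matching the elementary flavor of the rest of the paper and avoiding explicit model-category machinery.

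Concretely, for each $n \geq 0$ I would define $\hocolim_P^{(n)} F$ to be the subspace of $\hocolim_P F$ obtained by restricting the disjoint union in the Bousfield-Kan formula to simplices $\sigma \in N(P)$ with $\dim \sigma \leq n$. This yields a filtration
\[
\hocolim_P^{(0)} F \hookrightarrow \hocolim_P^{(1)} F \hookrightarrow \cdots, \qquad \hocolim_P F = \colim_n \hocolim_P^{(n)} F,
\]
and analogously for $G$. A direct inspection of the quotient relation in the Bousfield-Kan formula shows that $\hocolim_P^{(n)} F$ is obtained from $\hocolim_P^{(n-1)} F$ as a pushout along the attaching map
\[
\textstyle\bigsqcup_{\dim \sigma = n} \partial \Delta^{n} \times F_{\sigma_0} \hookrightarrow \bigsqcup_{\dim \sigma = n} \Delta^{n} \times F_{\sigma_0},
\]
which is a closed cofibration since $\partial \Delta^n \hookrightarrow \Delta^n$ is. The natural transformation $f$ respects these filtrations, so it induces a comparison of these pushout squares at each stage.

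I would then induct on $n$. The base case $n=0$ gives $\hocolim_P^{(0)} F = \bigsqcup_{p \in P} F_p$, and $\hocolim_P^{(0)}(f)$ is a disjoint union of the maps $f_p$, each a weak equivalence by hypothesis. For the inductive step, the maps $\partial \Delta^{n} \times F_{\sigma_0} \to \partial \Delta^{n} \times G_{\sigma_0}$ and $\Delta^{n} \times F_{\sigma_0} \to \Delta^{n} \times G_{\sigma_0}$ are weak equivalences (each is $\id \times f_{\sigma_0}$), and by the induction hypothesis $\hocolim_P^{(n-1)}(f)$ is a weak equivalence. Applying the gluing lemma for weak equivalences along closed cofibrations to the comparison of the two pushout squares then yields that $\hocolim_P^{(n)}(f)$ is a weak equivalence. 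Finally, because weak equivalences are preserved under sequential colimits along closed cofibrations, passing to the colimit in $n$ gives that $\hocolim_P(f)$ is a weak equivalence.

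The main obstacle is verifying carefully that $\hocolim_P^{(n)} F$ genuinely is the pushout described above with the identifications along the boundary behaving correctly; this requires tracking how the Bousfield-Kan quotient relation interacts with the skeletal filtration, and in particular showing that no new identifications are imposed when we pass from $\hocolim_P^{(n-1)} F$ to $\hocolim_P^{(n)} F$ beyond those explicitly recorded in the attaching map. Everything else reduces to two standard facts from classical homotopy theory — pushout stability and sequential colimit stability of weak equivalences along cofibrations — which are precisely the properties that motivate the definition of $\hocolim_P$ in the first place.
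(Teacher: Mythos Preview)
The paper does not supply a proof of this proposition; it is quoted as a standard result with references to Riehl and Munson--Wylie. Your skeletal-induction argument is correct and is essentially the classical hands-on proof one finds in such references: filter $\hocolim_P F$ by the dimension of chains in $N(P)$, express each stage as a pushout along the closed cofibration $\bigsqcup_{\dim\sigma=n}\partial\Delta^n\times F_{\sigma_0}\hookrightarrow\bigsqcup_{\dim\sigma=n}\Delta^n\times F_{\sigma_0}$, apply the gluing lemma to the comparison of pushout squares, and pass to the sequential colimit along the resulting cofibrations. The obstacle you flag---checking that the $n$-skeleton really is this pushout---is genuine but routine once one unwinds the quotient relation in the Bousfield--Kan formula: every identification touching $\Delta^n\times F_{\sigma_0}$ is along a facet $\tau\subset\sigma$, and on that facet the gluing map into the $(n{-}1)$-skeleton is $\id\times\id_{F_{\sigma_0}}$ if $\tau_0=\sigma_0$, or $\id\times F_{\sigma_0,\tau_0}$ if the deleted vertex is $\sigma_0$; no further identifications are imposed.

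Two small remarks. First, your argument as written treats only weak equivalences; the parenthetical ``(weak)'' in the statement also covers genuine homotopy equivalences, and the same skeletal argument goes through there, since the gluing lemma and colimit-stability hold for homotopy equivalences along closed Hurewicz cofibrations as well. Second, the word ``objectwise'' in the paper's conclusion is a slip---$\hocolim_P F$ is a single space---and you have correctly interpreted the intended conclusion as $\hocolim_P(f)$ being a (weak) equivalence of spaces.
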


\begin{remark}\label{Rem:Included_Maps_HCLs}
For functors $F\colon P\to Q$ and $G\colon Q\to \Top$, $F$ induces a
map of homotopy colimits 
\[
F_* \colon \hocolim_P G\circ F \to \hocolim_Q G.
\]
\end{remark}

Let $F\colon P\to Q$ be a functor of posets and $q\in Q$.  We define the category
$q \downarrow F$ to be the subposet of $P$ given by \[q\downarrow
F=\{p\in P\mid q\leq F(p)\}.\]

\begin{definition}
A functor $F\colon P\to Q$ of posets is said to be \emph{homotopy final} if
$N(q\downarrow F)$ is contractible for all $q\in Q$.
\end{definition}

The following result is a useful tool for computing homotopy colimits:

\begin{theorem}[{\cite[Theorem 8.5.6]{riehl}, \cite[Theorem 8.6.5]{munson2015cubical}}]\label{Thm:Homotopy_Finality}
If $F\colon P\to Q$ is a homotopy final functor of posets, then for
any functor 
$G\colon Q\to \Top$, 
\[
F_* \colon \hocolim_P G \circ F \to \hocolim_Q G
\]
is a homotopy equivalence.
\end{theorem}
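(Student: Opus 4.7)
The plan is to reduce to Quillen's Theorem A---the special case where $G$ is constant at a point---and then bootstrap to arbitrary $G$ via a skeletal filtration argument. First I would observe that when $G=\mathrm{const}_*$, the Bousfield-Kan formula collapses each $\Delta^k\times G(\sigma_0)$ to $\Delta^k$, so $\hocolim_P G\circ F \simeq N(P)$ and $\hocolim_Q G \simeq N(Q)$, under which identifications $F_*$ becomes $N(F)$. Quillen's Theorem A then yields the conclusion directly from the homotopy finality hypothesis.

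For general $G$, I would proceed by induction on a skeletal filtration of $\hocolim_Q G$. The Bousfield-Kan formula writes $\hocolim_Q G = \bigsqcup_\sigma \Delta^{|\sigma|}\times G(\sigma_0)/{\sim}$, and I would define $X_n$ to be the subspace assembled from chains $\sigma$ of length $\leq n$; pulling back along $F_*$ gives a compatible filtration $Y_n$ of $\hocolim_P G\circ F$. Each inclusion $X_{n-1}\hookrightarrow X_n$ is a pushout of the boundary inclusions $\partial\Delta^n\times G(\sigma_0) \hookrightarrow \Delta^n\times G(\sigma_0)$, one for each non-degenerate $n$-chain $\sigma$ in $Q$. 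The corresponding piece of $Y_n$ lying over such a cell can be identified, via a ``Fubini''-style decomposition of the bar construction, with something of the form $N(\sigma_0\downarrow F)\times \Delta^n\times G(\sigma_0)$; by the contractibility of $N(\sigma_0\downarrow F)$ guaranteed by homotopy finality, this maps by a weak equivalence onto $\Delta^n\times G(\sigma_0)$.

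The hard part will be the gluing step: verifying that the fiberwise weak equivalences assemble coherently into a global weak equivalence $Y_n\to X_n$, and that the colimit over $n$ inherits this equivalence. The cleanest way to handle this is to pass to a projectively cofibrant replacement of $G$ in $\Top^Q$ so that the Bousfield-Kan formula computes the genuine homotopy colimit and each attaching map is an honest cofibration, then apply the gluing lemma for homotopy pushouts in $\Top$ together with the inductive hypothesis; passage to the colimit over $n$ is then automatic since filtered colimits of weak equivalences between CW-like objects are again weak equivalences. Equivalently, one can package the entire argument using the language of weighted homotopy colimits: homotopy finality is precisely the statement that the natural comparison of weights $W_P\to F^*W_Q$ is an objectwise weak equivalence between projectively cofibrant weights, and the induced map on weighted colimits---which unfolds to $F_*$---is therefore a weak equivalence by the invariance of weighted colimits under objectwise equivalences of cofibrant weights.
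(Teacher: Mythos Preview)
The paper does not prove this theorem; it is quoted as a black box from Riehl's \emph{Categorical Homotopy Theory} (Theorem 8.5.6) and then invoked once in the proof of the multicover nerve theorem. There is therefore no paper-side argument to compare your proposal against.

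That said, your second sketch---the weighted-colimit/two-sided bar formulation---is essentially the argument in Riehl and is correct in outline: write $\hocolim_P G\circ F$ as the bar construction $B(\ast,P,F^*G)$, use the coend description $F^*G \simeq B(\hom_Q(F-,\,\cdot\,),Q,G)$ together with Fubini for bar constructions to rewrite it as $B\bigl(N(-\downarrow F),Q,G\bigr)$, and then use that each $N(q\downarrow F)$ is contractible to conclude that this is weakly equivalent to $B(\ast,Q,G)=\hocolim_Q G$. Your skeletal-filtration paragraph, however, contains a concrete error: the preimage under $F_*$ of a single cell $\Delta^n\times G(\sigma_0)$ attached along a nondegenerate chain $\sigma$ in $Q$ is not $N(\sigma_0\downarrow F)\times \Delta^n\times G(\sigma_0)$. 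The relevant piece of $\hocolim_P G\circ F$ is built from chains in $P$ whose image under $F$ lands in $\sigma$, which is a more complicated object than a product with $N(\sigma_0\downarrow F)$; in particular it depends on the entire chain $\sigma$, not just its minimum. If you want a filtration argument, filter by the length of chains in $Q$ after the Fubini rewriting above, not before; otherwise, just drop the skeletal paragraph and keep the weighted-colimit version.
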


\subsection{The Nerve Theorem}
We now outline a proof of a version of the nerve theorem for open covers, following \cite[\S 4.G]{hatcher2002algebraic}, \cite[\S 2]{dugger2004topological}, and \cite[\S 5]{bauer2022unified}.  
See \cite{bauer2022unified} for a thorough treatment of other variants of the nerve theorem, emphasizing the functorial formulations most useful in TDA.  

For $U$ a set of topological spaces, let $P^U$ denote the opposite poset of the face poset of
$N(U)$.  We have a functor 
\[\begin{aligned}
&D^U\colon P^U\to \Top,\\
&D^U([U_1,\ldots,U_k])=\bigcap_{i=1}^k U_{i},
\end{aligned}
\]
where the internal maps of $D^U$ are inclusions.

Recall that two topological spaces are said to be \emph{weakly homotopy equivalent}, or simply \emph{weakly equivalent}, if they are connected by a zigzag of weak homotopy equivalences.

\begin{definition} 
We say that a cover $U$ of a topological space is (weakly) \emph{good} if the
common intersection of any finite subset of $U$ is empty or (weakly) homotopy equivalent to a point.
\end{definition}

\begin{theorem}[Nerve theorem for open covers {\cite[\S 4.G]{hatcher2002algebraic}, \cite[Theorem 5.9]{bauer2022unified}}]\label{Thm:Weil_Nerve}~
Let $U$ be an open cover of a topological space $X$.  
\begin{itemize}
\item[(i)] If $U$ is weakly good, then $X$ is weakly equivalent to $N(U)$.  
\item[(ii)] If $U$ is good and $X$ is paracompact, then $X$ is homotopy equivalent to $N(U)$.  
\end{itemize}
\end{theorem}

\begin{proof}[Outline of proof]
Let $\ast \colon P^U\to \Top$ denote the constant functor with value a
point, and let $Z=\hocolim_{P^U} (D^U)$.  Consider the natural transformation $D^U\to \ast$.  
If $U$ is (weakly) good, then by \cref{Prop:Homotopy_Invariance_Of_Homotopy_Colimit}, the induced map 
\[
Z\to \hocolim_{P^U} \ast
\]
is a (weak) homotopy equivalence.  But
\begin{equation}\label{Eq:Nerve_As_Hocolim}
\hocolim_{P^U} \ast \cong N(P^U)\cong \Bary(N(U)) \cong N(U),
\end{equation}
where we get the middle homeomorphism from \cref{Prop:Subdivision}\,(i) and (ii).  Thus,
we obtain a (weak) homotopy equivalence 
\[
\rho^1\colon Z\to N(U).
\]
Let 
\[
\rho^2\colon Z\to \colim_{P^U} D^U=X
\]
denote the natural map from the homotopy colimit to the colimit.  If $X$ is paracompact, then a
partition of unity argument constructs a homotopy inverse for
the map $\rho^2$ in the proof of \cref{Thm:Weil_Nerve}, so $\rho^2$ is a homotopy equivalence \cite[\S 4.G]{hatcher2002algebraic}.   
For $X$ not necessarily paracompact, it follows from the results of \cite[\S 2]{dugger2004topological} that $\rho^2$ is a weak homotopy equivalence \cite[\S 5]{bauer2022unified}.
\end{proof}

\subsection{The Persistent Nerve Theorem}
As observed in \cite[Lemma 3.4] {chazal2008towards}, \cite[\S
  3]{botnan2015approximating}, and \cite[Theorem 5.9]{bauer2022unified}, \cref{Thm:Weil_Nerve}
extends readily to a result for diagrams of spaces, the \emph{persistent nerve theorem}.  For brevity's sake, we explicitly state only the extension of \cref{Thm:Weil_Nerve}\,(i).  

To prepare for the statement, we extend the definitions used in the statement of
\cref{Thm:Weil_Nerve} to the diagrammatic setting.

\begin{definition}[Diagrammatic Covers and
    Nerves]\mbox{}\label{Def:Diagrammatic_Covers_Nerves} 
\begin{enumerate}[(i)]
\item For $\C$ a category and $F\colon \C\to \Top$ a functor, a \emph{cover} of $F$
  is a set $U$ of functors from $\C$ to $\Top$ such that  
\begin{enumerate}
\item for each $c\in \ob \C$, \[U_c :=\{G_c \mid G\in U\}\] is a cover of $F_c$,
\item for each $G\in U$ and $\varphi\in \hom_\C(c,d)$, $G_{\varphi}$
  is the restriction of $F_{\varphi}$ to $G_c$.
\end{enumerate}
\item We say a cover $U$ of a functor $F\colon \C\to \Top$ is \emph{weakly good} if for
  each $c\in \ob \C$, $U_c$ is a weakly good cover. 
\end{enumerate}
\end{definition}

The nerve construction for a cover of spaces
(\cref{Def:Diagramatic_Nerve}\,(i)) is functorial, in the sense that 
a cover $U$ of a functor $F\colon \C\to \Top$ induces a ``nerve diagram''
$N(U)\colon \C\to \Simp$, where \[N(U)_c=N(U_c)\] for each $c\in
\ob \C$, and the internal maps of $N(U)$ are the
obvious ones.  Note that these internal maps are always
monomorphisms, regardless of whether the internal maps of $F$ are
injections.  This implies that the subdivision functor $\Sd(N(U))$ of \cref{Def:Subdivision_Cech_Rips}\,(ii) is well defined, which will be needed for the statement of \cref{Thm:General_Multicover_Nerve}.  

\begin{theorem}[Persistent Nerve Theorem {\cite[Theorem 5.9]{bauer2022unified}}]\label{Thm:Persistent_Nerve}
If $U$ is a weakly good open cover of $F\colon \C\to \Top$, then $N(U)$ is weakly equivalent to $F$.
\end{theorem}

\begin{proof}
It is straightforward to check that in the proof of the nerve theorem outlined above, the construction of the space $Z$ extends to yield a functor $Z\colon \C\to \Top$, and the maps $\rho^1$ and $\rho^2$ extend to weak equivalences 
\[N(U)\xleftarrow{\,\rho^1} Z\xrightarrow{\rho^2} F.\qedhere\]
\end{proof}

\subsection{The Multicover Nerve Theorem}
We now extend the persistent nerve theorem to a multicover version.  As with the persistent nerve theorem, one has two natural formulations of the result, extending \cref{Thm:Weil_Nerve}\,(i) and \cref{Thm:Weil_Nerve}\,(ii), respectively; again, for brevity's sake, we explicitly state only the former.  We first prove a version of the multicover nerve theorem for spaces (\cref{Thm:Space_Multicover_Nerve}), and then observe that this extends readily to diagrams of spaces (\cref{Thm:General_Multicover_Nerve}).  

Given a topological space $X$ and a cover $U$ of $X$, we define a
filtration 
\[
\begin{aligned}
&\M(U)\colon (0,\infty)^{\op}\to \Top,\\
&\M(U)_k=\{y\in X\mid y \textup{ is contained in at least $k$
  elements of $U$}\}.
  \end{aligned}
\]

\begin{theorem}[Multicover Nerve Theorem for Spaces]\mbox{}\label{Thm:Space_Multicover_Nerve}
If $U$ is a weakly good open cover of a topological space, 
then
$\M(U)$ is weakly equivalent to $\Sd(N(U))$. 
\end{theorem}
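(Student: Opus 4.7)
My plan is to adapt the outlined proof of the unfiltered nerve theorem (\cref{Thm:Weil_Nerve}) to the filtered setting, carrying out every construction in a way that is natural in the filtration parameter $k$, following the approach of \cite{cavanna17when}.

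For each $k \in (0,\infty)$, let $P^U_k \subset P^U$ denote the subposet of faces of $N(U)$ with at least $k$ vertices; by \cref{Prop:Subdivision}\,(i), (ii) we have a natural identification $N(P^U_k) = \Sd(N(U))_k$. The diagram $D^U$ restricts to $D^U_k \colon P^U_k \to \Top$. Since the intersection of any $\geq k$ elements of $U$ is contained in $\M(U)_k$, and since each point of $\M(U)_k$ lies in some such intersection, we have $\colim_{P^U_k} D^U_k = \M(U)_k$. Setting $Z_k = \hocolim_{P^U_k} D^U_k$, the good cover hypothesis makes $D^U_k$ objectwise contractible, so \cref{Prop:Homotopy_Invariance_Of_Homotopy_Colimit} yields a weak equivalence
\[
\rho^1_k \colon Z_k \to \hocolim_{P^U_k}(\ast) = N(P^U_k) = \Sd(N(U))_k.
\]
The canonical map from homotopy colimit to colimit supplies $\rho^2_k \colon Z_k \to \M(U)_k$. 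All constructions are functorial in $k$ (using the inclusions $P^U_k \hookrightarrow P^U_{k'}$ for $k \geq k'$), so $\rho^1$ and $\rho^2$ assemble into natural transformations of functors $(0,\infty)^{\op} \to \Top$.

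The main obstacle, and precisely the step omitted in \cite{cavanna17when}, is to verify that $\rho^2_k$ is a weak equivalence. In the unfiltered nerve theorem this is handled by a partition-of-unity argument producing an inverse $X \to Z$; to obtain the filtered version, one would want this inverse to restrict to a map $\M(U)_k \to Z_k$. However, a partition of unity subordinate to the cover $U$ sends $x \in \M(U)_k$ to the simplex $\{U \in U \mid \varphi_U(x) > 0\}$, which may have strictly fewer than $k$ vertices even though at least $k$ elements of $U$ contain $x$; thus the naive construction does not land in $Z_k$. I would close this gap by following Sheehy's original strategy~\cite{sheehy2012multicover}: introduce an auxiliary indexing poset $Q_k$ whose elements encode, for each point of $\M(U)_k$, the \emph{full} collection of cover elements containing it, equipped with a natural functor $Q_k \to P^U_k$. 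Using \cref{Thm:Homotopy_Finality}, show that this functor is homotopy final, so the hocolim of the appropriate diagram over $Q_k$ is weakly equivalent to $Z_k$; this auxiliary hocolim then admits a direct partition-of-unity comparison with $\M(U)_k$, which uses the assumption that $\M(U)_k$ is paracompact.

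Combining the two objectwise weak equivalences yields a zigzag $\Sd(N(U)) \xleftarrow{\rho^1} Z \xrightarrow{\rho^2} \M(U)$ of natural transformations of functors $(0,\infty)^{\op} \to \Top$, establishing the claimed weak equivalence of bifiltrations.
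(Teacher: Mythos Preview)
Your strategy matches the paper's exactly: build $Z_k=\hocolim_{P^U_k}D^U_k$, obtain $\rho^1_k$ from the good-cover hypothesis and $\rho^2_k$ as the comparison map to the colimit, check naturality in $k$, and then fill the gap in \cite{cavanna17when} via an auxiliary poset and \cref{Thm:Homotopy_Finality}.

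The one place where your proposal is underspecified is the definition of $Q_k$: ``elements encode, for each point of $\M(U)_k$, the full collection of cover elements containing it'' does not determine a poset, and the most literal readings (e.g.\ the subposet of $P^U_k$ consisting of maximal simplices realized at some point) do not obviously give a homotopy-final functor into $P^U_k$. The paper makes the following concrete choice. Let $U^k$ be the good open cover of $\M(U)_k$ by the $k$-fold intersections $\bigcap_{V\in t}V$, indexed by the order-$k$ subsets $t\subset U$ with nonempty intersection, and take the auxiliary poset to be $P^{U^k}$, the opposite face poset of the nerve of $U^k$. The functor $F\colon P^{U^k}\to P^U_k$ sends a family $\{t_1,\ldots,t_m\}$ of order-$k$ subsets of $U$ to the single subset $t_1\cup\cdots\cup t_m$, and one has $D^{U^k}=D^U_k\circ F$. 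Homotopy finality holds because for each $q\in P^U_k$ the undercategory $q\downarrow F$ has a minimum element---the set of \emph{all} order-$k$ subsets of $q$---so its nerve is contractible by \cref{Prop:Min_Elt_Contractible}\,(iii). The ordinary partition-of-unity argument now applies to the good cover $U^k$ of the paracompact space $\M(U)_k$, showing that $\hocolim_{P^{U^k}}D^{U^k}\to\M(U)_k$ is a homotopy equivalence; since this map factors as $\rho^2_k\circ F_*$, two-out-of-three gives that $\rho^2_k$ is a weak equivalence.
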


\begin{proof}[Proof of \cref{Thm:Space_Multicover_Nerve}]
For $k\in (0, \infty)$, let $P^{\geq k}$ denote the subposet of $P^U$
consisting of faces of dimension at least $k-1$, and let $j^k\colon
P^{\geq k}\to P^U$ denote the inclusion.  By
\cref{Rem:Included_Maps_HCLs}, the spaces \[Z_k:=\hocolim_{P^{\geq k}}
(D^U\circ j^k)\] assemble into a functor $Z\colon(0,\infty)^{\op}\to \Top$, where
each internal map $Z_{k,l}$ is induced by the inclusions of posets $P^{\geq k}\hookrightarrow P^{\geq l}$.
 
To prove the theorem, we will show that there exist objectwise
weak homotopy equivalences   
\[
\Sd(N(U)) \xleftarrow{\,\rho^1} Z \xrightarrow{\rho^2}  \M(U).
\]
Let $\ast \colon P^{\geq k}\to \Top$ denote the constant functor to a 
point.  Since $U$ is a weakly good cover, we have an objectwise weak homotopy
equivalence $D^U\circ j^k\to \ast$ which,  by 
\cref{Prop:Homotopy_Invariance_Of_Homotopy_Colimit},  induces a
weak homotopy equivalence between the homotopy colimits of the two
diagrams.  Arguing as in \cref{Eq:Nerve_As_Hocolim}, we see that $\hocolim(*)$ is canonically homeomorphic to $\Sd(N(U))_k$.  Therefore, 
we obtain a weak homotopy equivalence 
\[
\rho^1_k\colon \hocolim_{P^{\geq k}}(D^U\circ j^k)\to \Sd(N(U))_k.
\]
Note that $\colim_{P^{\geq k}} (D^U\circ j^k)=\M(U)_k$; we define 
$\rho^2_k\colon Z_k\to \M(U)_k$ to be the natural map from the homotopy colimit to the
colimit.  It is easily checked that the maps $\rho^1_k$ and $\rho^2_k$ are natural
with respect to $k$, so they assemble into natural transformations
$\rho^1$ and $\rho^2$.  

To finish the proof, we need to check that each $\rho^2_k$ is a weak homotopy 
equivalence; it is this check that is omitted in ~\cite[Appendix
  B]{cavanna17when}.  We have already seen in the proof of \cref{Thm:Weil_Nerve} that $\rho^2_1$ 
is a weak homotopy equivalence, so it remains to handle the case $k>1$.  
Our argument is inspired by Sheehy's
first proof of his version of the multicover nerve theorem,
specifically~\cite[Lemma 8]{sheehy2012multicover}.

Let $P^k\subset P^U$ denote the set of $(k-1)$-faces of $N(U)$, i.e.,
the order-$k$ subsets of $U$ with non-empty common intersection.  $P^k$
indexes a cover $U^k=\{U^k_a\}_{a\in P^k}$ of $\M(U)_k$, with
$U^k_a=D^U_a$.  Via the bijection $P^k\to U^k$, we may identify each
element of $P^{U^k}$ with a set of order-$k$ subsets
of $U$.  We have a functor 
\[
F\colon P^{U^k}\to P^{\geq k}
\]
specified by the assignment
\[
\{t_1,\ldots,t_m\} \mapsto t_1\cup \cdots \cup t_m,
\] 
where each $t_i$ is an order-$k$ subset of $U$, and the target is an
order-$k'$ subset of $U$, for some $k'\geq k$.  Note that
\[D^{U^k}=D^U\circ j^k\circ F,\] since the intersection of intersections of sets is the intersection of all the sets involved.

We have a map 
\[
q\colon \hocolim_{P^{U^k}} (D^{U^k})\to \colim_{P^{U^k}}(D^{U^k})=\M(U)_k.
\]
Since $U^k$ is an open cover of $\M(U)_k$, the proof that $p_1^2$ is a weak homotopy equivalence also establishes that $q$ is a weak homotopy equivalence.  Therefore, by the 2-out-of-3
property, to show that
$\rho_k^2$ is a weak homotopy equivalence,  it suffices to check that 
\begin{enumerate}
\item the map $q$ factors as $q = \rho_k^2\circ F_*$, where  
\[
F_*\colon \hocolim_{P^{U^k}}(D^U\circ j^k\circ F)\to \hocolim_{P^{\geq k}} (D^U\circ j^k)
\] 
is the map induced by $F$, and

\item $F_*$ is a homotopy equivalence.
\end{enumerate}

The first statement follows readily from the definitions of the three
maps.  We establish the second statement using a homotopy finality
argument: Note that for each $q\in P^{\geq k}$, there is a unique
minimum element $p$ of $P^{U^k}$ with $F(p)=q$, namely the set
of all order-$k$ subsets of $q$.  The result then follows from
\cref{Prop:Min_Elt_Contractible}\,(iii) and \cref{Thm:Homotopy_Finality}. 
\end{proof}

Finally, we extend the multicover nerve theorem for spaces to a version for diagrams of spaces which refines the persistent nerve theorem (\cref{Thm:Persistent_Nerve}).  Given a functor $F\colon \C\to \Top$ and a cover $U$ of $F$ (\cref{Def:Diagrammatic_Covers_Nerves}), we define the functor 
\[
\begin{aligned}
&\M(U)\colon (0,\infty)^{\op}\times \C\to \Top,\\
&\M(U)_{(k,r)}=\{y\in F_r\mid y \textup{ is contained in at least $k$ elements of $U_r$}\}.
\end{aligned}
\]

\begin{theorem}[Multicover Nerve Theorem for Diagrams of Spaces]\mbox{}\label{Thm:General_Multicover_Nerve}
If $U$ is a weakly good open cover of a functor $F:\C\to \Top$, then $\M(U)$ is weakly equivalent to $\Sd(N(U))$.
\end{theorem}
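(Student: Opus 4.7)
The plan is to promote the proof of Theorem \ref{Thm:Space_Multicover_Nerve} from the space level to the diagram level by checking naturality in the $\C$-variable, exactly as Theorem \ref{Thm:Persistent_Nerve} (the Persistent Nerve Theorem) was obtained from Theorem \ref{Thm:Weil_Nerve}. Concretely, I would build a functor $Z\colon (0,\infty)^{\op}\times \C\to \Top$ together with a zig-zag
\[
\Sd(N(U))\xleftarrow{\,\rho^1\,} Z \xrightarrow{\rho^2} \M(U)
\]
of objectwise weak equivalences, where at each fixed $c\in\ob\C$ this zig-zag is the one produced by Theorem \ref{Thm:Space_Multicover_Nerve} applied to the good cover $U_c$ of $F(c)$.

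First I would observe that every morphism $\varphi\colon c\to d$ in $\C$ induces a morphism of posets $P^{U_c}\to P^{U_d}$ sending $[G_1(c),\ldots,G_k(c)]$ to $[G_1(d),\ldots,G_k(d)]$: this is well-defined because condition (b) of Definition \ref{Def:Diagrammatic_Covers_Nerves} forces $F_\varphi$ to restrict to $G_i(\varphi)$ on $G_i(c)$, so the common intersection maps into the common intersection and in particular is nonempty at $d$ whenever it is at $c$. The same map carries $P^{\geq k}_c$ into $P^{\geq k}_d$ (it preserves the dimension of a chain), which in particular shows that the internal maps of $N(U)$ are monomorphisms, so that $\Sd(N(U))$ is well-defined in the sense of Definition \ref{defn:sub-filt}\,(ii). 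The intersection diagrams $D^{U_c}$ then assemble into a single diagram indexed by the Grothendieck construction on $c\mapsto P^{U_c}$, and feeding this into the Bousfield--Kan formula yields a functor
\[
Z\colon (0,\infty)^{\op}\times \C\to \Top,\qquad Z_{(k,c)}=\hocolim_{P^{\geq k}_c}(D^{U_c}\circ j^k),
\]
with internal maps induced by the poset inclusions $P^{\geq k}_c\hookrightarrow P^{\geq \ell}_c$ (for $k\geq \ell$) and by the poset maps $P^{\geq k}_c\to P^{\geq k}_d$ just described.

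Next I would verify that the maps $\rho^1_{(k,c)}\colon Z_{(k,c)}\to \Sd(N(U_c))_k$ and $\rho^2_{(k,c)}\colon Z_{(k,c)}\to \M(U_c)_k$ constructed in the proof of Theorem \ref{Thm:Space_Multicover_Nerve} are natural in $(k,c)$. Naturality in $k$ is already established in the single-$c$ proof. Naturality in $\C$ is immediate from the Bousfield--Kan formula: $\rho^1_{(k,c)}$ is induced by the objectwise map $D^{U_c}\circ j^k\to \ast$ through the identification $\hocolim\ast\cong \Sd(N(U_c))_k$, and $\rho^2_{(k,c)}$ is the canonical map from the homotopy colimit to the colimit $\M(U_c)_k$; both constructions are functorial in any morphism of the underlying diagram data, and the morphism induced by $\varphi\colon c\to d$ is of exactly this form. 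Hence $\rho^1$ and $\rho^2$ assemble into natural transformations of functors $(0,\infty)^{\op}\times \C\to \Top$, and Theorem \ref{Thm:Space_Multicover_Nerve} applied to each $U_c$ (valid since $\M(U)$ is objectwise paracompact, so in particular each $\M(U_c)$ is paracompact in every degree) guarantees that they are objectwise weak equivalences. This yields the desired weak equivalence between $\Sd(N(U))$ and $\M(U)$.

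The step I expect to be the main subtlety is simply bookkeeping: making sure the assignment $(c,\sigma)\mapsto \bigcap_{G\in\sigma}G(c)$ is genuinely a functor out of the Grothendieck construction of $c\mapsto P^{U_c}$, rather than only a functor up to relabeling of vertices. The potential hazard is that distinct $G,G'\in U$ might satisfy $G(c)=G'(c)$ at some $c$ but not at $d$, so that a single vertex of $N(U_c)$ has two images in $N(U_d)$. This is harmless provided one indexes the vertex set of $N(U_c)$ by the set $U$ itself (as a set of functors) rather than by the \emph{image} set $U_c=\{G(c)\mid G\in U\}$; with this convention, the simplicial maps $N(U_c)\to N(U_d)$, the poset maps $P^{U_c}\to P^{U_d}$, and the intersection diagrams are all strictly functorial in $c$, and the argument goes through verbatim.
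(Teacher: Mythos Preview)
Your proposal is correct and follows essentially the same approach as the paper: the paper's proof simply asserts that the maps $\rho^1_k$ and $\rho^2_k$ constructed in the proof of \cref{Thm:Space_Multicover_Nerve} are natural not only in $k$ but also in the $\C$-variable, exactly as in the passage from \cref{Thm:Weil_Nerve} to \cref{Thm:Persistent_Nerve}. You have supplied considerably more detail than the paper does, including the explicit construction of $Z$ and the bookkeeping remark about indexing vertices by $U$ rather than by $U_c$, but the underlying argument is the same.
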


\cref{Thm:Multicover}\,(i) is the special case of this theorem where
$\C=(0,\infty)$ and $U_r$ is a set of open balls of radius $r$ with fixed centers.

\begin{proof}[Proof of \cref{Thm:General_Multicover_Nerve}]
In close analogy with the proof of \cref{Thm:Persistent_Nerve}, it is easy to check that the maps $\rho_k^1$ and
$\rho_k^2$ constructed in the proof of
\cref{Thm:Space_Multicover_Nerve} with respect to the cover $U_r$ of
$F_r$ are natural not only with respect to $k$ but also with respect
to $r$.  Thus, the proof of \cref{Thm:Space_Multicover_Nerve} extends
to a proof of this result.
\end{proof}

\begin{remark}[Multicover Nerve Theorem for Closed Covers]\label{Rem:Multicover_Closed}
In our proof of \cref{Thm:General_Multicover_Nerve}, the assumption that cover elements are open is needed only to establish that the maps $q$ defined in the proof of \cref{Thm:Space_Multicover_Nerve} are weak homotopy equivalences.  In fact, \cite[Proposition 5.37]{bauer2022unified} tells us that the maps $q$ are also weak homotopy equivalences for a large class of closed covers, namely those satisfying the conditions of \cite[Theorem 5.9.1.b]{bauer2022unified}.  Therefore, the multicover nerve theorem also holds for such covers, provided the covers are also weakly good.    
\end{remark}

\appendix

\section{A Computational Example of the Stability of Degree-Rips Bifiltrations\label{Sec:Examples}}
\label{Sec:RIVET_Point_Clouds}

In this section, we explore the stability of the degree-Rips bifiltration in an example, using the 2-parameter persistence software RIVET
\cite{lesnick2015interactive,lesnick2019computing,rivet}.  We consider three point clouds $X$, $Y$, and $Z$, shown in Figure
\ref{Fig:Point_Clouds}:
\begin{itemize}
\item $X$ consists of 475 points
sampled uniformly from an annulus in $\R^2$ with outer radius .5 and inner radius .4. 
\item $Y=X\cup N$, where $N$ consists of 25 points sampled uniformly from a disc of radius 
.4.
\item $Z$ consists of 500 points sampled uniformly from a disc of radius .5.
\end{itemize}
We would like to consider, for each $W\in \{X,Y,Z\}$ the homology module $H_1(\nDRips(W))$ with coefficients in $\Z/2\Z$.  However, working directly with $H_1(\nDRips(W))$ is  computationally expensive, so we instead work with an approximation $H(W)$ of $H_1(\nDRips(W))$; this is explained in \cref{Sec:Bifiltrations_We_Compute}.  In \cref{Sec:Visualization}, we illustrate the stability of degree bifiltrations in practice by using RIVET to visualize invariants of $H(X)$, $H(Y)$, and $H(Z)$.  Then, in \cref{Sec:Stability_Analysis}, we apply the stability result \cref{Prop:Degree_Subspaces}\,(ii) to explain a small part of the similarity between $H(X)$ and $H(Y)$ observed in our visualizations.  Recall that \cref{Prop:Degree_Subspaces}\,(ii) applies to a nested pair of data sets; in \cref{Rem:Symmetric_Version_No_Good}, at the end of this section, we observe that \cref{Thm:Degree_Rips_Stab}\,(ii), which does not assume that the data is nested, does not constrain the similarity between $H(X)$ and $H(Y)$.

We warn the reader that the remainder of this section is somewhat technical, in part because of the approximations involved.  We invite the reader to skim the section on a first reading, focusing on understanding the figures.

\begin{figure*}[t!]
    \centering
    \begin{subfigure}[t]{0.33\textwidth}
        \centering
        \includegraphics[width=0.95\textwidth]{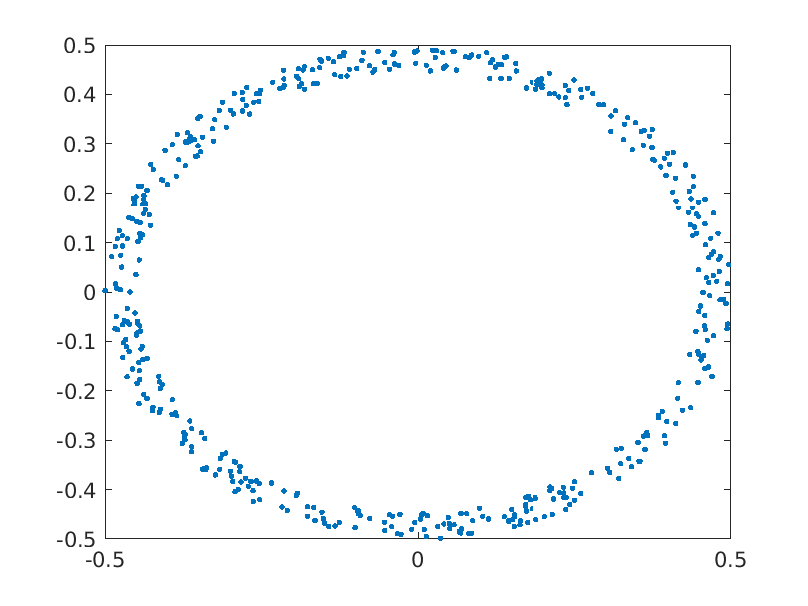}
        \caption{$X$}
    \end{subfigure}%
    ~ 
    \begin{subfigure}[t]{0.33\textwidth}
        \centering
        \includegraphics[width=0.95\textwidth]{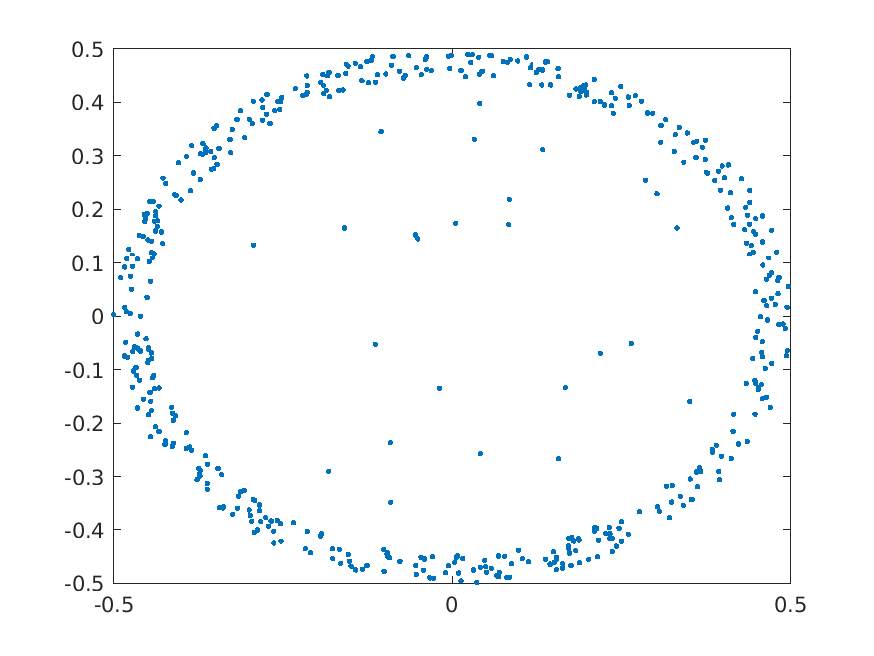}
        \caption{$Y$}
    \end{subfigure}
    ~
        \begin{subfigure}[t]{0.33\textwidth}
        \centering
        \includegraphics[width=0.95\textwidth]{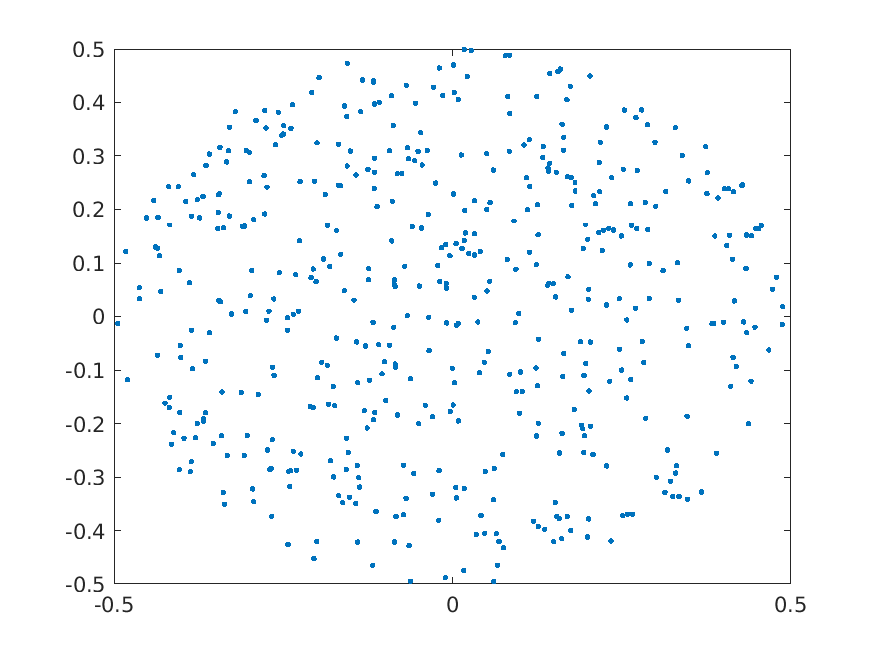}
        \caption{$Z$}
    \end{subfigure}
    
    \caption{The point clouds $X$, $Y$, $W$.} 
        \label{Fig:Point_Clouds}
\end{figure*}

\subsection{Approximations to the Degree-Rips Bifiltrations}\label{Sec:Bifiltrations_We_Compute}
For discussing RIVET computations, it is convenient to introduce a variant of the normalized Degree-Rips bifiltration.  First, for $W$ a finite metric space, let $\ClR(W)\colon [0,\infty)\to \Top$ be the filtration defined by taking \[\ClR(W)_r=\lim_{r<s} \Rips(W)_s.\]  
This is precisely the variant of the Rips construction mentioned in \cref{Rem:Different_Def_Of_Rips_Cech}.  

Now define a bifiltration 
\[\ClDR(W)\colon \R^{\op}\times [0,\infty)\to \Simp\] by taking $\ClDR(W)_{(k,r)}$ to be the maximal subcomplex of $\ClR(W)_r$ whose vertices have degree at least $|W|(r-1)$.  This bifiltration is slightly different from the normalized degree-Rips bifiltration $\nDRips(W)$ defined in \cref{sec:bifilt}---for one thing, they are indexed by different posets---but it's not hard to see that these two bifiltrations are equivalent, in the sense that each determines the other in a simple way.  Moreover, the restriction of $\ClDR(W)$ to the poset $\OurPoset$ is $\epsilon$-interleaved with $\nDRips(W)$ for any $\epsilon>0$.  

The largest simplicial complex in $\ClDR(W)$ is the simplex with vertices $W$; denote this as $S$.  For any simplex $\sigma \in S$, we define the \emph{set of bigrades of appearance of $\sigma$} to be  the set of minimal elements $(k,r)\in \R^{\op}\times [0,\infty)$ such that $\sigma \in  \ClDR(W)_{(k,r)}$.  This is a finite and nonempty subset of $\OurPoset$.  

To control the cost of the computations, for each $W\in \{X,Y,Z\}$ we in fact work with a 
``coarsening'' \[F(W)\colon \R^{\op}\times [0,\infty)\to \Top\] of the bifiltration $\ClDR(W)$, where the
bigrades of appearance of all simplices are rounded upwards so as to lie on
a uniform $100\times 100$ grid; the precise definition of this coarsening is given in \cite{lesnick2015interactive}.  The grid is 
chosen in a way that ensures that $\ClDR(W)$ and $F(W)$ are
$(\tau^{\frac{1}{100}},\Id)$-interleaved.  Let $F'(W)$ denote the restriction of $F(W)$ to $\OurPoset$.  By the generalized triangle inequality for interleavings (\cref{Rem:Strict_Variants}),  $\nDRips(W)$ and $F'(W)$ are  $(\tau^{\frac{1}{100}+\epsilon},\tau^\epsilon)$-interleaved for all $\epsilon>0$.

For $W\in \{X,Y,Z\}$, let $H(W)$ denote the homology module $H_1(F(W))$ with coefficients in $\Z/2\Z$.  Observe that $H(W)$ is finitely presented.

\subsection{Visualization}\label{Sec:Visualization}

RIVET computes and visualizes three invariants of a finitely presented biperistence module $M\colon\R^{\op}\times [0,\infty)\to \Top$: 
\begin{itemize}
\item The Hilbert function of $M$, i.e., the dimension of each vector space of $M$.
\item The bigraded Betti numbers of $M$; these are functions $\beta_i^M\colon\R^2\to \Nbb$, $i\in \{0,1,2\}$ which, roughly speaking, record the birth indices of generators, relations, and relations among the relations.
\item The  \emph{fibered barcode} of $M$ \cite{cerri2013betti}, i.e., the map sending each affine line $\ell\subset \R^{\op}\times [0,\infty)$ of non-positive slope to the barcode $\B{M^{\ell}}$, where $M^{\ell}$ is the restriction of $M$ to $\ell$.  We regard $\B{M^{\ell}}$ as a collection of intervals on the line $\ell$.
\end{itemize}
See \cite{lesnick2015interactive,lesnick2019computing} for more details about these invariants and their computation in RIVET.

\cref{Fig:RIVET_Plots} shows RIVET's visualization of $H(X)$, $H(Y)$, and $H(Z)$, and of the barcodes $\protect\B{H(X)^{\ell}}$, $\protect\B{H(Y)^{\ell}}$, and $\protect\B{H(Z)^{\ell}}$  for one choice of $\ell$.
To explain the figure, first note that the $x$-axis is mirrored in each subfigure, relative to the usual convention, so that values decrease from left to right.   In each figure, the Hilbert function is represented by grayscale shading, where the darkness is proportional to the homology dimension.  The lightest non-white shade of gray shown in each figure corresponds to a value of $1$.  The bigraded Betti numbers are represented by translucent colored dots whose area is proportional to the value.  The $0^{\mathrm{th}}$, $1^{\mathrm{st}}$, and $2^{\mathrm{nd}}$ bigraded Betti numbers are shown in green, red, and yellow, respectively.  In each figure, the line $\ell$ is shown in blue, and the corresponding barcode is plotted in purple, with each interval offset perpendicularly from the line.

Note that the Hilbert function of $H(X)$ takes value 1 on a large connected region parameter space, whose restriction to the left half-plane $k> 0$ looks roughly like a triangle.  The Hilbert function of $H(Y)$ also takes value 1 on a substantial region in parameter space, albeit one smaller than for $H(X)$.  In contrast, the Hilbert function of $H(Z)$ does not take the value 1 in a large region of the parameter space, and in fact almost all of the support of $H(Z)$ lies very near $(0,0)$.

\begin{figure*}[ht!]
    \centering
        \begin{subfigure}[t]{0.5\textwidth}
        \centering
        \includegraphics[width=0.95\textwidth]{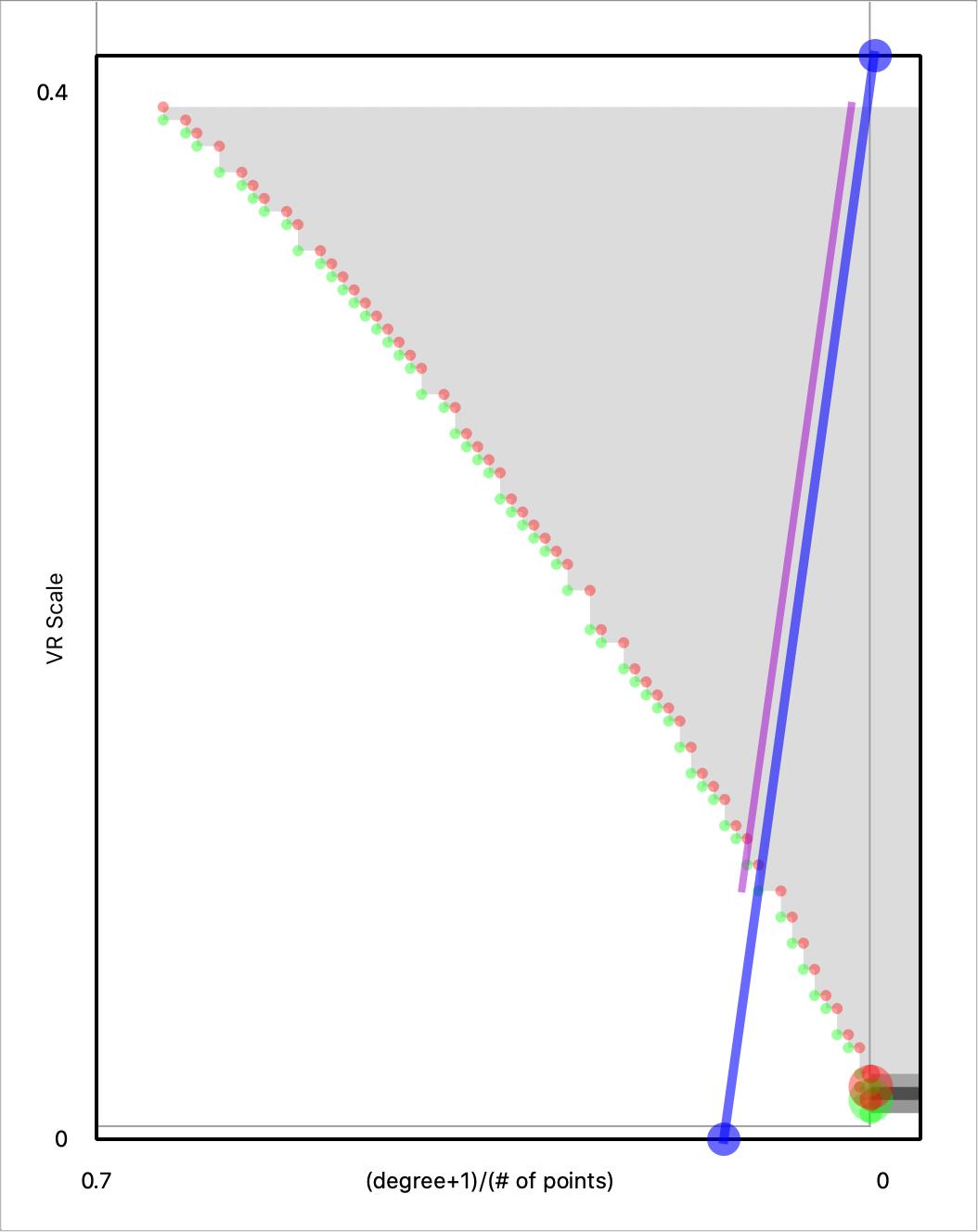}
        \caption{$\protect \B{H(X)^{\ell}}$}
    \end{subfigure}%
    ~ 
    \begin{subfigure}[t]{0.5\textwidth}
        \centering
        \includegraphics[width=0.95\textwidth]{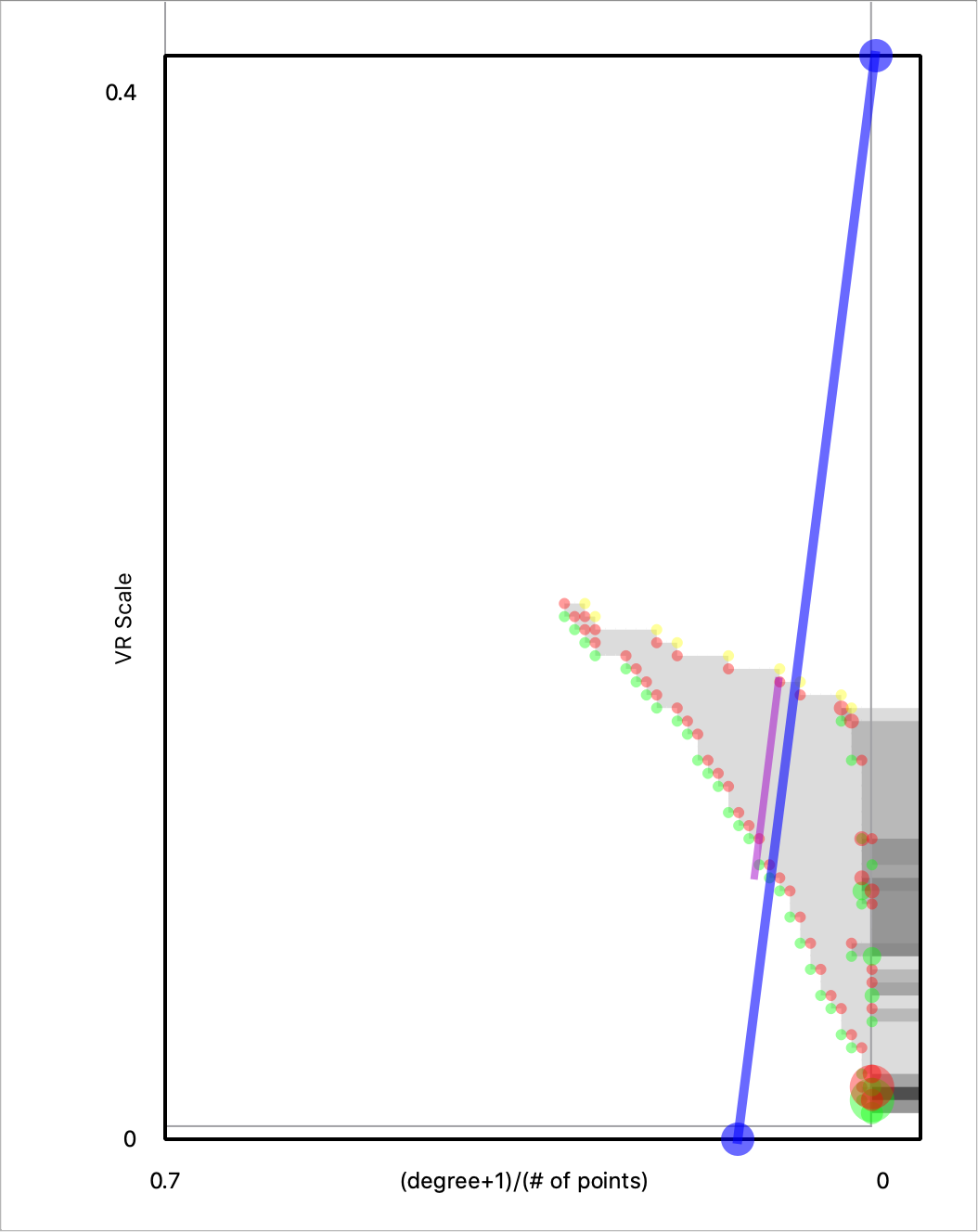}
        \caption{$\protect \B{H(Y)^{\ell}}$}
            \end{subfigure}
      \par\bigskip
         \begin{subfigure}[t]{0.5\textwidth}
        \centering
        \includegraphics[width=0.95\textwidth]{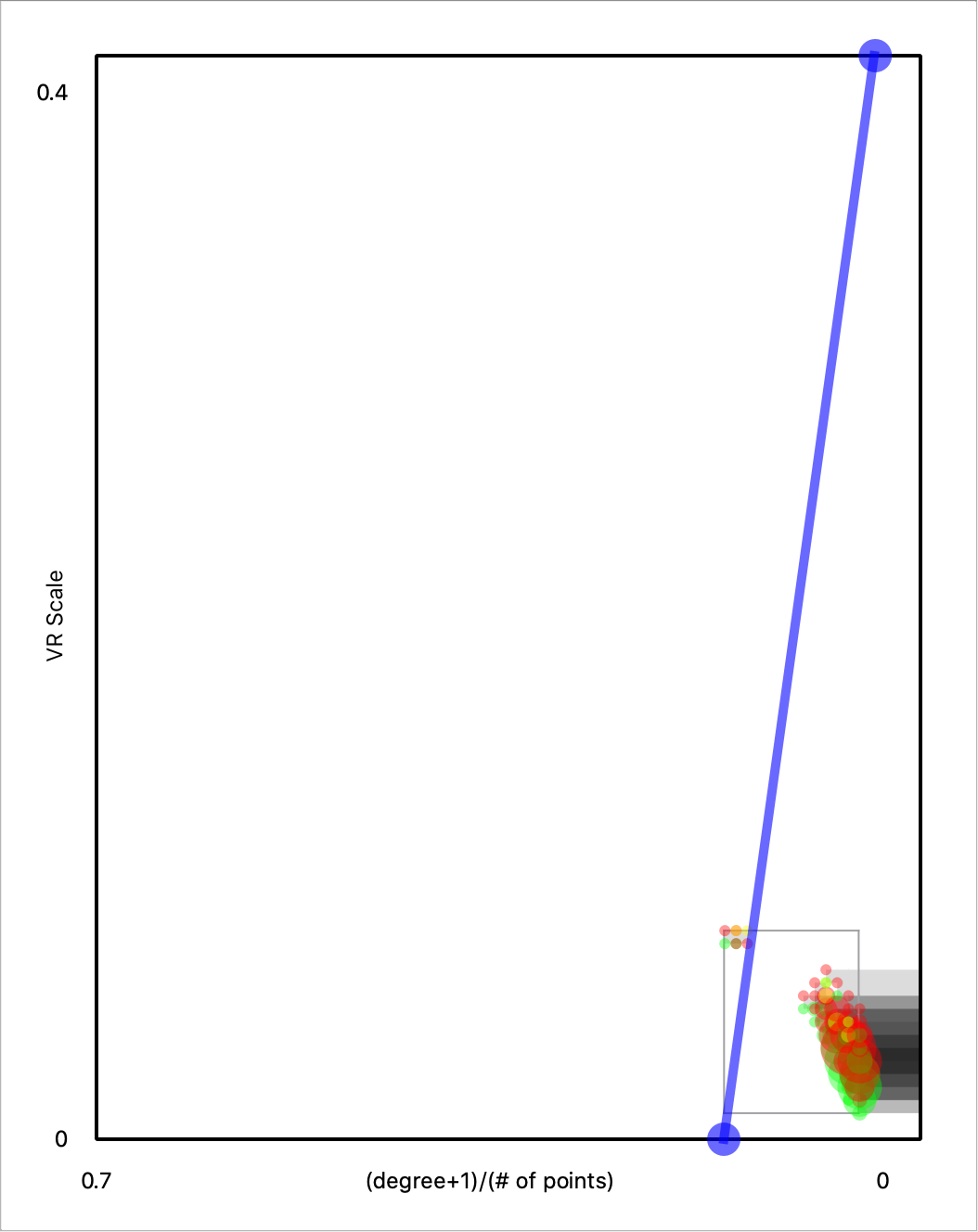}
        \caption{$\protect \B{H(Z)^{\ell}}$}
    \end{subfigure}
    \caption{RIVET's visualization of $H(X)$, $H(Y)$, $H(Z)$ and the barcodes $\protect\B{H(X)^{\ell}}$, $\protect\B{H(Y)^{\ell}}$, $\protect\B{H(Z)^{\ell}}$,  for one choice of line $\ell$.  See the text for an explanation.} 
        \label{Fig:RIVET_Plots}
\end{figure*}

\subsection{Stability Analysis}\label{Sec:Stability_Analysis}
By appealing to \cref{Prop:Degree_Subspaces}\,(ii), we can explain a small part of the similarity between the structures of $H(X)$ and $H(Y)$ observed in \cref{Fig:RIVET_Plots}.  More specifically, we will show that given $H(X)$, \cref{Prop:Degree_Subspaces}\,(ii) implies that the Hilbert function of $H(Y)$ has non-trivial support on a small region of parameter space.  The only property of $Y$ we use in our analysis is that $Y$ is a metric space of cardinality 500 containing $X$ as a subspace.

By \cref{Prop:Degree_Subspaces}\,(ii),  $\nDRips(X)$ and $\nDRips(Y)$ are $(\kappa^{|X|/|Y|},\gamma^{\delta})$-homotopy interleaved for any $\delta>\tfrac{25}{500}=\tfrac{1}{20}$.  Note that $|X|/|Y|=\tfrac{475}{500}=\tfrac{19}{20}$.  By the discussion of \cref{Sec:Bifiltrations_We_Compute}, $\nDRips(X)$, $F'(X)$ are $(\tau^{\frac{1}{100}+\epsilon},\tau^\epsilon)$-interleaved for all $\epsilon>0$, and the same is true for $\nDRips(Y)$, $F'(Y)$.   For $\epsilon\geq 0$, let $\zeta^\epsilon$ be the forward shift given by \[\zeta^{\epsilon}(x,y)=\left(\frac{19x}{20}-\frac{1}{100}-\epsilon,y+\frac{1}{100}+\epsilon\right),\]
and let $\zeta=\zeta^0$.  
A strict interleaving is also a homotopy interleaving, so by \cref{Prop:Triangle_Inequality}, $F'(X)$ and $F'(Y)$ are $(\zeta^{\epsilon},\gamma^{\frac{6}{100}+\epsilon})$-homotopy interleaved for all $\epsilon>0$.  From this, one can show that in fact, $F'(X)$ and $F'(Y)$ are $(\zeta,\gamma^{\frac{6}{100}})$-homotopy interleaved, using an argument similar to the proof of \cite[Theorem 6.1]{lesnick2015theory}.

Letting $H'(X)$ and $H'(Y)$ denote the respective restrictions of $H(X)$ and $H(Y)$ to $\OurPoset$ (i.e., $H'(X)=H_1(F'(X))$ and $H'(Y)=H_1(F'(Y))$), we then have that $H'(X)$ and $H'(Y)$ are  $(\zeta,\gamma^{\frac{6}{100}})$-interleaved by \cref{Proposition:Homotopy_to_Homology_Interleavings}.  In what follows, we will show that this constrains certain vector spaces in $H'(Y)$ to have dimension at least one.   

Let $\Delta$ denote the large triangle-like connected region in $\OurPoset$ where the Hilbert function of $H'(X)$ takes value 1.  The boundary of $\Delta$ intersects the vertical line $k=0$ and the horizontal line \[r=\tfrac{7891389}{20000000}\approx.396.\]  By inspecting the bigraded Betti numbers and fibered barcode of $H(X)$, as shown in  \cref{Fig:RIVET_Plots} and \cref{Fig:RIVET_Zoom}, it can be seen that $\Delta$ is in fact is contained in the support of a thin indecomposable summand of $H'(X)$.   Thus, if $a\leq b\in \Delta$ (with respect to the partial order on $\R^{\op}\times \R$), then $\rank({H'(X)}_{a,b})=1$. 
\begin{figure*}[ht!]
    \centering
        \includegraphics[width=0.475\textwidth]{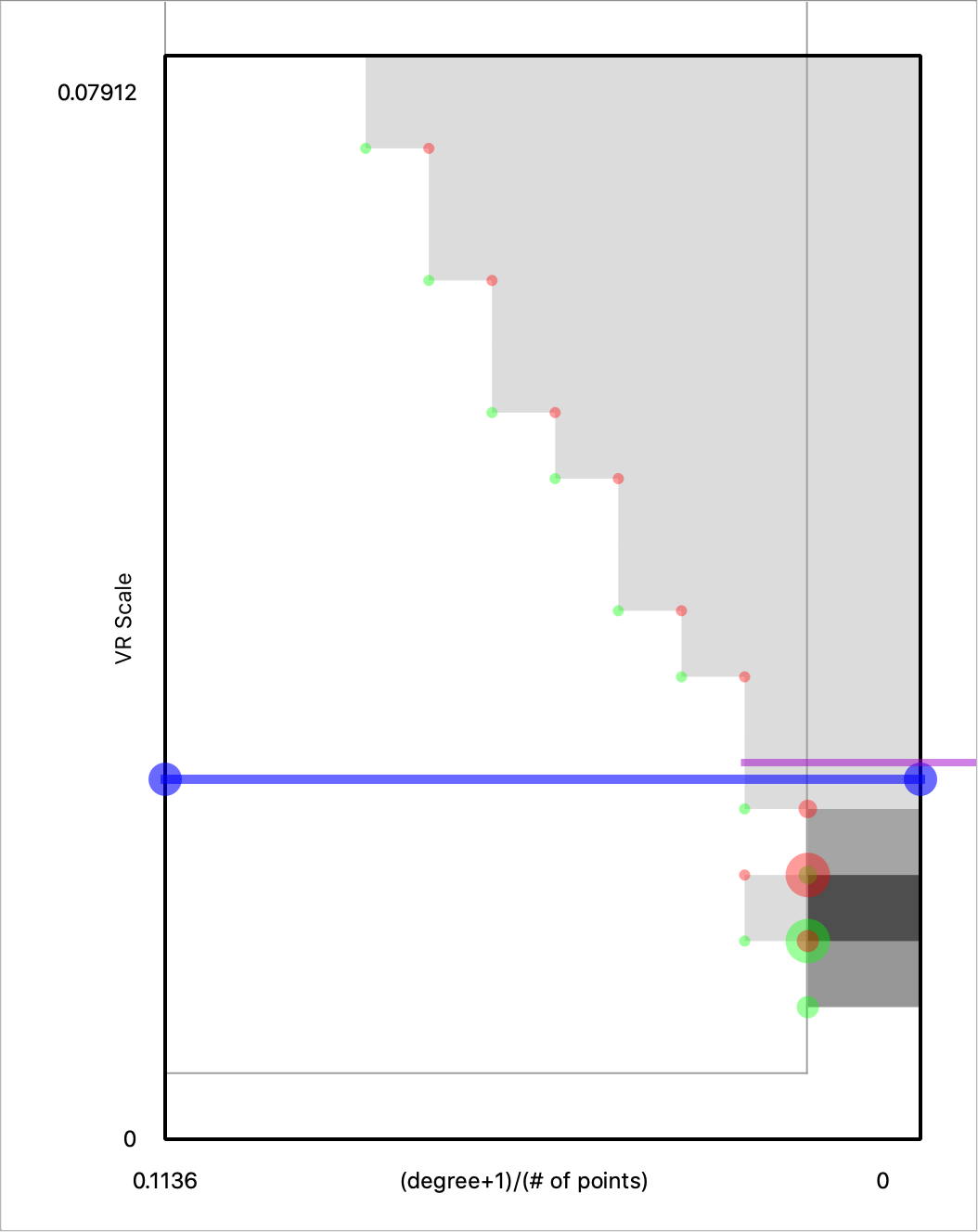}
    \caption{RIVET's visualization of $H(X)$, zoomed in near $(0,0)$.} 
        \label{Fig:RIVET_Zoom}
\end{figure*}
In particular, if $(k,r)\in \Delta$ and also \[\gamma^{\frac{6}{100}}\circ
\zeta(k,r)=\left(\frac{19k}{20}-\frac{7}{100},3r+\frac{9}{100}\right)\in \Delta,\] then \[\rank
H'(X)_{(k,r),\gamma^{\frac{6}{100}}\circ
\zeta(k,r)}=1.\]  Now if $(k,r)\in \Delta$, then
$(\frac{19k}{20}-\frac{7}{100},3r+\frac{9}{100})\in \Delta$ if and only if $k>c_x$ and
$r<c_y$, where 
\[c_x=\tfrac{7}{95}\approx .074,\quad c_y=\left(\tfrac{2030463}{20000000}\right)\approx .102.\]  From the output of RIVET, it can be seen that the set \[\Omega=\{(k,r)\in \Delta\mid k>c_x,\ r<c_y\}\] is a small but non-empty connected subregion of
$\Delta$, containing the grades of three elements in any minimal set of
generators for $H'(X)$.  $\Omega$ is shown in \cref{Fig:Highlights}\,(A).  Explicitly, 
 \[\Omega=\rect(v_1)\cup \rect(v_2)\cup \rect(v_3),\]
where
\begin{align*}
\rect(a)&=\{(x,y)\mid a_1\geq x > c_x,\ a_2\leq y < c_y\},\\
v_1&=\left(\tfrac{2657}{23750},\tfrac{1897929}{20000000}\right)\approx(.112,.095),\\
v_2&=\left(\tfrac{2183}{23750},\tfrac{1698147}{20000000}\right)\approx(.092,.085),\\
v_3&=\left(\tfrac{973}{11875},\tfrac{299673}{4000000}\right) \approx(.082,.075).
\end{align*}
\begin{figure*}[ht!]
    \centering
        \begin{subfigure}[t]{0.5\textwidth}
        \centering
        \includegraphics[width=0.95\textwidth]{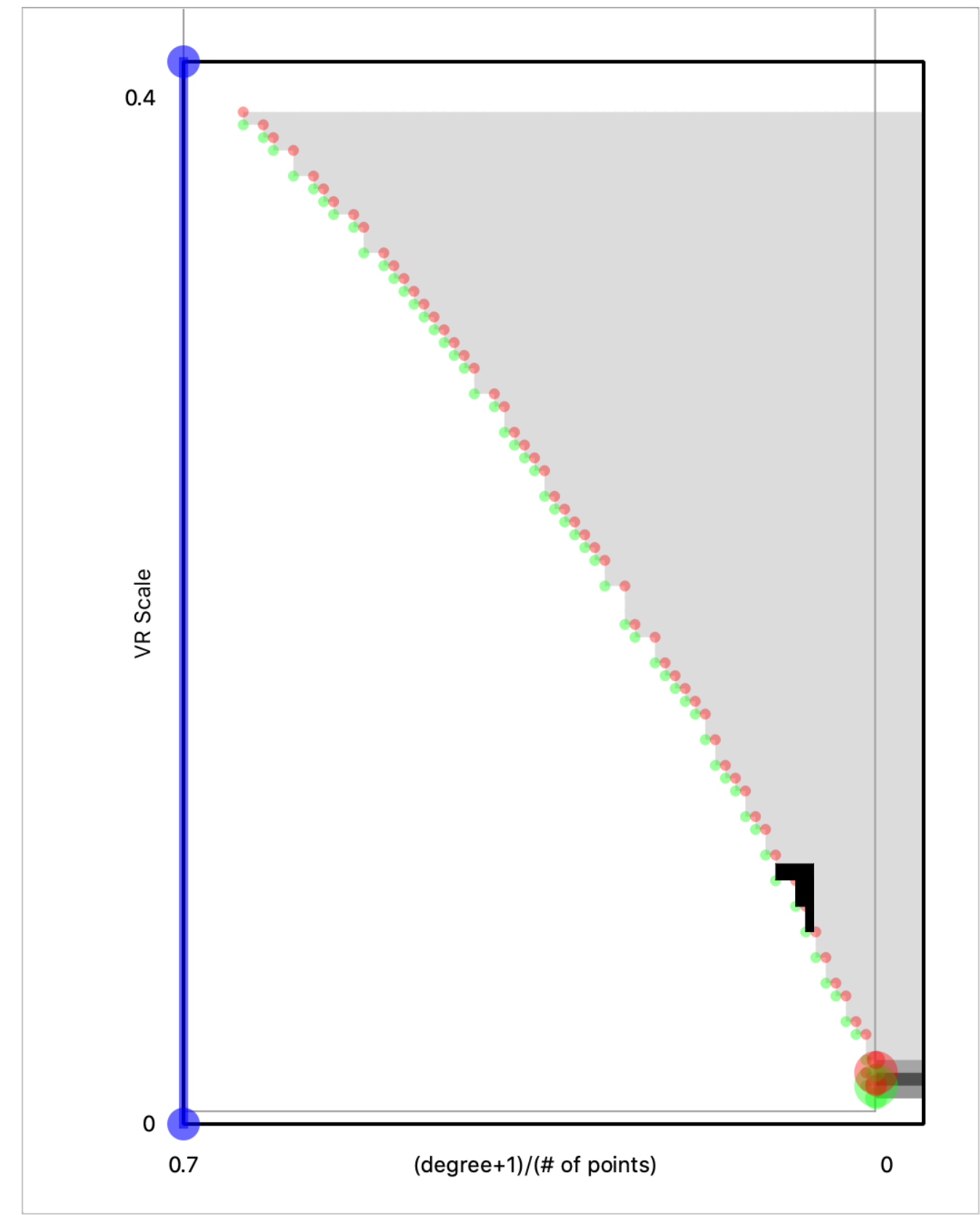}
        \caption{$\Omega$}
    \end{subfigure}%
    ~ 
    \begin{subfigure}[t]{0.5\textwidth}
        \centering
        \includegraphics[width=0.93\textwidth]{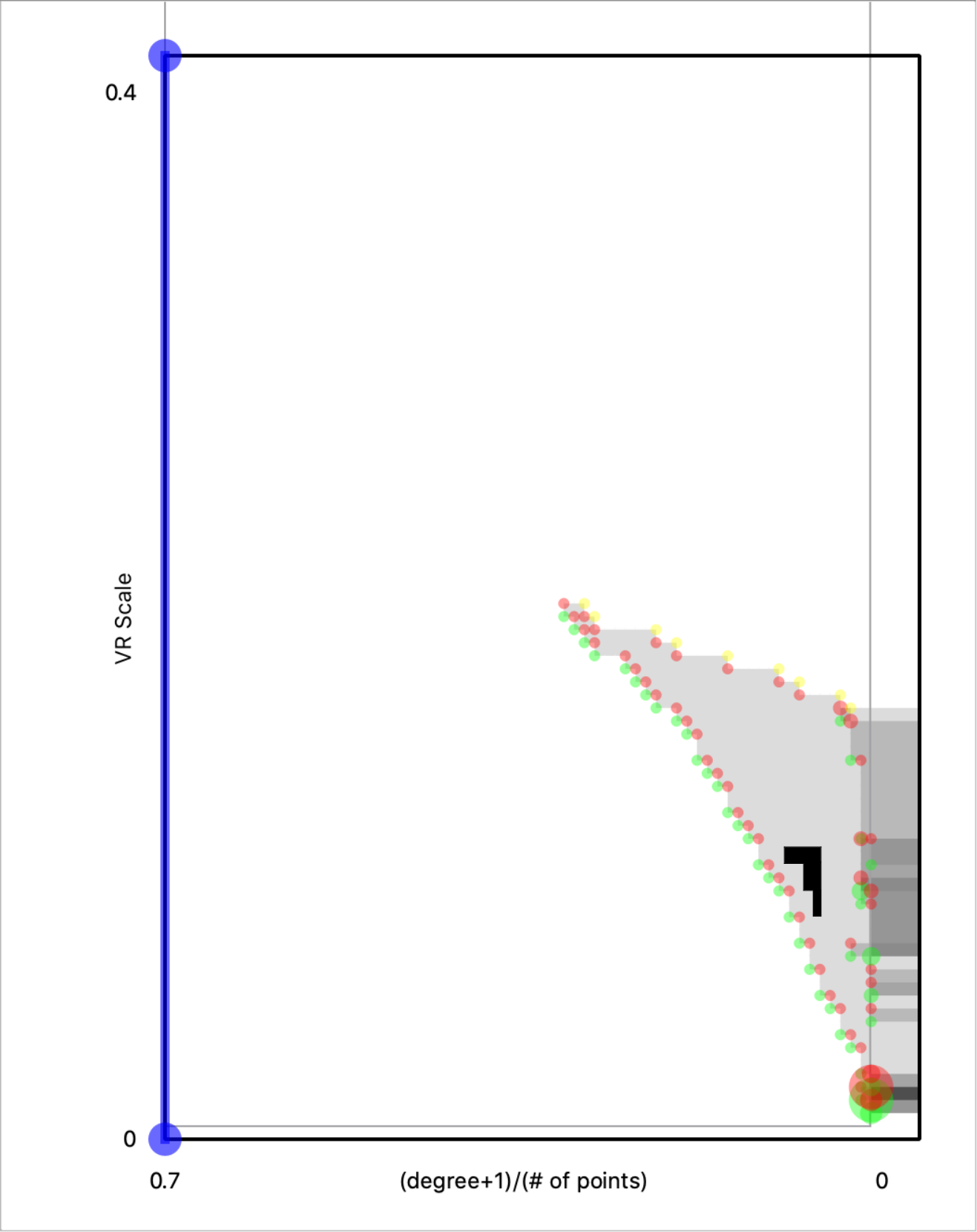}
        \caption{$\zeta(\Omega)$}
            \end{subfigure}
    \caption{The regions $\Omega$ and $\zeta(\Omega)$, shown in solid black.  [Note: These regions were drawn by hand in software, and so are not as precise as if they had been drawn algorithmically.  However, the imprecisions are miniscule.]} 
        \label{Fig:Highlights}
\end{figure*}

For any $(k,r)\in \Omega$, a $(\zeta,\gamma^{\frac{6}{100}})$-interleaving between $H'(X)$ and $H'(Y)$ provides a factorization of the non-zero linear map $H'(X)_{(k,r),(\frac{19k}{20}-\frac{7}{100},3r+\frac{9}{100})}$ through \[H'(Y)_{\zeta(k,r)}=H'(Y)_{(\frac{19k}{20}-\frac{1}{100},r+\frac{1}{100})}.\]  Therefore, the support of the Hilbert function of $H'(Y)$ must contain $\zeta(\Omega)$.   $\Omega$ is shown in \cref{Fig:Highlights}\,(B).  Letting \[d_x=\tfrac{6}{100},\quad d_y=\tfrac{2230463}{20000000}\approx .112,\]
$\zeta(\Omega)$ can be written explicitly as 
\[\zeta(\Omega)=\rect(w_1)\cup \rect(w_2)\cup \rect(w_3),\]
where 
\begin{align*}
\rect(a)&=\left\{(x,y)\mid a_1\geq x > d_x ,\ a_2\leq y < d_y\right\},\\
w_1&=\left(\tfrac{2407}{25000},\tfrac{2097929}{20000000}\right)\approx(.096,.105),\\
w_2&=\left(\tfrac{1933}{23750},\tfrac{1898147}{20000000}\right)\approx(.077,.095),\\
w_3&=\left(\tfrac{212}{3125},\tfrac{339673}{4000000}\right) \approx(.068,.085).
\end{align*}

  \cref{Fig:Highlights}\,(B) indicates that the support of the Hilbert function of $H'(Y)$ does indeed contain $\zeta(\Omega)$, and in fact is much larger.

\begin{remark}
We have shown that given $H(X)$,  \cref{Prop:Degree_Subspaces}\,(ii) constrains the structure of $H(Y)$.  Using a similar argument, one can show that given $H_1(\nDRips(X))$,  \cref{Prop:Degree_Subspaces}\,(ii) constrains the structure of $H_1(\nDRips(Y))$.  However, a similar argument also shows that given $H(Y)$,  \cref{Prop:Degree_Subspaces}\,(ii) provides no constraint on $H(X)$; and similarly, given $H_1(\nDRips(Y))$, \cref{Prop:Degree_Subspaces}\,(ii) provides no constraint on $H_1(\nDRips(X))$.
\end{remark}

\begin{remark}\label{Rem:Symmetric_Version_No_Good}
By \cref{Rem:Outliers}, $\GPr(X,Y)\leq d_P(X,Y)\leq\frac{25}{500}=\frac{1}{20}$.  Starting from this observation, one can perform a stability analysis similar to the one done above, but using the weaker interleaving provided by \cref{Thm:Degree_Rips_Stab}\,(ii) in place of the the one provided by \cref{Prop:Degree_Subspaces}\,(ii).  It is not difficult to check that the interleaving between $H'(X)$ and $H'(Y)$ provided by such an analysis can be taken to be trivial.  Further, using a similar argument, 
 one can show that the interleavings between $H_1(\nDRips(X))$ and $H_1(\nDRips(Y))$ guaranteed to exist by \cref{Thm:Degree_Rips_Stab}\,(ii) can also be taken to be trivial.  Thus, the existence of this interleaving does not constrain the relationship between $H_1(\nDRips(X))$ and $H_1(\nDRips(Y))$ at all.
Because the shared topological signal present in $X$ and $Y$ is especially strong, this indicates that relative to the needs of applications, \cref{Thm:Degree_Rips_Stab} is a rather weak result.
\end{remark}

\begin{remark}\label{Rem:Stronger_Stability}
\cref{Fig:RIVET_Plots} makes clear that while $H(X)$ and $H(Y)$ have
rather different global structure, they do share substantial
qualitative similarities that neither module shares with $H(Z)$.
While our analysis demonstrates that
\cref{Prop:Degree_Subspaces}\,(ii) non-trivially constrains the
relationship between $H(X)$ and $H(Y)$, most of the similarity between
the two modules seen in \cref{Fig:RIVET_Plots} is not explained by the
proposition (or, to the best of our knowledge, by any other known
result).  It would be valuable to develop a refinement of our stability theory which more fully explains the observed similarity.
\end{remark}

\section{Proof of~\cref{Prop:Degree_Tightness}}\label{sec:tightness}

\cref{Prop:Degree_Tightness} is equivalent to the following more concrete statement:
\begin{proposition}\label{Prop:More_Explicit}
For any $c\in [1,3)$,
\begin{enumerate}
\item[(i)]  there exist finite subsets $X$ and $Y$ of a good metric space and $\delta>\Pro(\nu_X,\nu_Y)$
  such that $\nDCech(X)$ and $\nDCech(Y)$ are not $\gamma^{\delta,c}$-homotopy interleaved.
\item[(ii)] there exist finite metric spaces $X$ and $Y$ and $\delta>\GPr(\mu_X,\mu_Y)$
  such that $\nDRips(X)$ and $\nDRips(Y)$ are not $\gamma^{\delta,c}$-homotopy interleaved.
\end{enumerate}
\end{proposition}

We prove statement (ii).  Statement (i) follows from essentially the same argument; we leave the easy adaptation to the reader.  We give a constructive proof of statement (ii), involving sets in a high-dimensional Euclidean space with the $\ell_1$ metric.  In an effort to make the ideas more accessible, we will first present a simpler argument which proves the proposition only for $c\in [1,2)$, 

Fix $c\in [1,2)$, and choose $r>\frac{1}{2(2-c)}\geq\frac{1}{2}$.
Let $\mathbf{e}_i$ denote the $i^{\mathrm{th}}$ standard basis vector in $\R^3$ and let $\vec 0\in \R^3$ denote the zero vector.  Let
\[
Y=\{r\mathbf{e}_i\mid 1\leq i\leq 3\} \quad\textrm{and}\quad  Z=Y\cup\{\vec 0\}.
\]
We regard $Y$ and $Z$ as metric spaces via restriction of the $\ell_1$ metric on $\R^3$.

By \cref{Rem:Outliers}, we
have \[\GPr(\mu_Y,\mu_Z)\leq\Pro(\nu_Y,\nu_Z)\leq \frac{1}{4}.\]  
Choose $\delta$ in the open interval
$\left(\frac{1}{4},\frac{3}{8}\right)$, and choose $\epsilon\in
(0,\frac{r(1-\frac{c}{2})-\delta}{c}]$.

Since $\epsilon>0$, we have 
$\vec 0\in \nDRips(Z)_{(\frac{3}{4},\frac{r}{2}+\epsilon)}$, so $\nDRips(Z)_{(\frac{3}{4},\frac{r}{2}+\epsilon)}$ is a non-empty simplicial complex.  $\nDRips(Y)_{(k,s)}$ is empty for $k>\frac{1}{3}$ and $s\leq r$, so in particular, since 
\[\frac{3}{4}-\delta
>\frac{1}{3}\quad\textup{ and }\quad\left(\frac{r}{2}+\epsilon\right)+\delta
\leq r,\] we have that
\[\nDRips(Y)_{(\frac{3}{4}-\delta,c(\frac{r}{2}+\epsilon)+\delta)}=\nDRips(Y)_{\gamma^{\delta,c}(\frac{3}{4},\frac{r}{2}+\epsilon)}\] is empty.

If $\nDRips(Y)$ and $\nDRips(Z)$ were $\gamma^{\delta,c}$-homotopy interleaved, then the persistence modules $H_0(\nDRips(Y))$ and $H_0(\nDRips(Z))$, would be $\gamma^{\delta,c}$-interleaved. 
 But then the internal map \[j\colon H_0(\nDRips(Z)_{(\frac{3}{4},\frac{r}{2}+\epsilon)})\to H_0(\nDRips(Z)_{\gamma^{\delta,c}\circ \gamma^{\delta,c}(\frac{3}{4},\frac{r}{2}+\epsilon)}),\] which is non-zero, would factor through the trivial vector space \[H_0(\nDRips(Y)_{\gamma^{\delta,c}(\frac{3}{4},\frac{r}{2}+\epsilon)}),\] a contradiction.  
(Note that by the way we chose $\delta$, $\gamma^{\delta,c}\circ \gamma^{\delta,c}(\frac{3}{4},\frac{r}{2}+\epsilon)$ has positive $x$-coordinate, so the map 
$j$ is well defined.) This proves the proposition for $c\in [1,2)$.

Having completed our warmup, we now turn to our main argument, which will establish the result for all $c\in [1,3)$.  Fix $c\in [1,3)$ and choose \[r>\frac{2}{301(3-c)}\geq \frac{1}{301}.\]  Let $S'$ be the following equispaced subset of a square in $\R^2$, regarded as an ordered set with the order as given:
\begin{align*}
S'=\{&(0,0),(1,0),(2,0),\ldots, (25,0),(25,1),(25,2),\ldots, (25,25),\\
       &(24,25),(23,25),\ldots,(0,25),(0,24),(0,23),\ldots, (0,1)\}.
\end{align*}
 Let $S=\{r\vec s\mid \vec s\in S'\}$. Note that $|S|=100$.  The order on $S'$ induces an order on $S$; write the $i^{\mathrm{th}}$ point of $S$ as $s_i$.  Let
 \[Y=\{r\mathbf{e}_i\mid 1\leq i\leq 300\}\subset \R^{300}.\] 
For $i\in \{2,\ldots,99\}$, define \[Y_{i}\subset \R^{30002}= \R^2\times \R^{300(i-1)}\times \R^{300}\times \R^{300(100-i)}\]
by \[Y_i\colon =\{(s_i,0,y,0)\mid y\in Y\}.\]
Similarly, define
\begin{align*}
Y_1\subset \R^{30002}&= \R^2\times \R^{300}\times \R^{29700}\text{ and }\\
Y_{100} \subset \R^{30002}&= \R^2\times \R^{29700}\times \R^{300}        
\end{align*}
by 
\begin{align*}
Y_1=\{(s_i,y,0)\mid y\in Y\},\\Y_{100}=\{(s_i,0,y)\mid y\in Y\}.
\end{align*}
Letting $\vec 0_{30000}\in \R^{30000}$ denote the zero vector, we define 
\begin{align*}
W&=\bigcup_{i=1}^{100} Y_i,\\
X&=W\cup (S\times \{\vec 0_{30000}\})).
\end{align*}
Note that 
\begin{align*}
|W|&=300*100=30000,\\
|X|&=|W|+100=30100.
\end{align*}
Let $d_1$ denote the $\ell_1$-metric on $\R^{30002}$.  We regard $W$ and $X$ as metric spaces via restriction of $d_1$.  
 
Write $\hat S= S\times \{\vec 0_{30000}\}$, and for each $s_i\in S$ write $\hat s_i=(s_i,\vec 0_{30000})$.  We record the distances between all pairs of points in $X$:
\begin{lemma}\label{Lem:Dist_Obvs}\mbox{}
Letting $y_i$ and $y_j$ be points of $Y_i$ and $Y_j$, respectively, we have 
\begin{enumerate}[(i)]
\item $d_1(\hat s_i,\hat s_j)=rm$, where $m\in \{0,\ldots,50\}$ and \[m\equiv \pm (j-i) \mod 100,\]
\item $d_1(y_i,y_j)=d_1(\hat s_i,\hat s_j)+2r$,
\item $d_1(\hat s_i,y_j)=d_1(\hat s_i,\hat s_j)+r$.
\end{enumerate}
\end{lemma}

To establish \cref{Prop:More_Explicit}\,(ii), it suffices to prove the following.

\begin{lemma}\label{Lem:Main_Tightness_Step}
$\nDRips(W)$ and $\nDRips(X)$ are not $\gamma^{\delta,c}$-interleaved for some $\delta>\GPr(\mu_W,\mu_X)$.  
\end{lemma}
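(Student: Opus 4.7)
The plan is to argue by contradiction via $H_1$ with $\mathbb{Z}/2$-coefficients. Suppose that $\nDRips(W)$ and $\nDRips(X)$ are $\gamma^{\delta,c}$-interleaved for some $\delta > \GPr(\mu_W,\mu_X)$; writing $\gamma := \gamma^{\delta,c}$, note that $\gamma \circ \gamma$ sends $(k,r) \mapsto (k-2\delta,\, c^2 r + (c+1)\delta)$. I will exhibit a bigrade $(k_0,r_0) \in \OurPoset$ for which the internal map $H_1(\nDRips(X)_{(k_0,r_0)}) \to H_1(\nDRips(X)_{\gamma\circ\gamma(k_0,r_0)})$ is nonzero while $H_1(\nDRips(W)_{\gamma(k_0,r_0)}) = 0$, contradicting the factorization of the internal map through the middle term. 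A direct calculation (sharpening \cref{Rem:Outliers}) gives $\Pro(\nu_W,\nu_X) = 1/301 \geq \GPr(\mu_W,\mu_X)$, and the hypothesis on $r$ yields $(3-c)r/2 > 1/301$; I fix $\delta$ in the nonempty interval $(1/301,\,(3-c)r/2) \cap (0,\,101/30100)$, then choose $r_0 \in (r/2,\,(3r-2\delta)/(2c)]$ (so $cr_0+\delta \leq 3r/2$) and $k_0 \in (101/30100 + 2\delta,\,303/30100]$.

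The main computations, all obtained from \cref{Lem:Dist_Obvs} by tabulating vertex degrees in the relevant Rips complexes, are as follows. At the input grade $(k_0,r_0)$, with $r_0 \in (r/2,r]$, each $\hat{s}_i$ has degree $302$ while each vertex of $Y_i^{100}$ has degree only $100$ in $\Rips(X)_{r_0}$; the threshold $k_0 \cdot 30100 - 1 \in (300,302]$ retains exactly the $\hat{s}_i$, so $\nDRips(X)_{(k_0,r_0)} = \Rips(\hat{S})_{r_0}$ is the cyclic $100$-gon, with $H_1 \cong \mathbb{Z}/2$ generated by $c_{\hat{S}} := \sum_i [\hat{s}_i,\hat{s}_{i+1}]$. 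At the middle grade $\gamma(k_0,r_0)$, the scale $cr_0+\delta \leq 3r/2$ is strictly below the threshold at which points in distinct clusters $Y_i^{100}$ become Rips-neighbors, so the complex is a disjoint union of cliques inside each cluster, and thus has vanishing $H_1$.

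The crucial step is showing that $c_{\hat{S}}$ remains nontrivial at the output grade. Setting $s' := c^2 r_0 + (c+1)\delta$, one estimates $s' < 5r$. A degree count shows $\nDRips(X)_{\gamma\circ\gamma(k_0,r_0)}$ equals either $\Rips(\hat{S})_{s'}$ (when $s' \leq r$, still a $100$-gon since $s' > r/2$) or $\Rips(X)_{s'}$ (when $s' > r$, at which scale every vertex of $X$ has degree well above the threshold). In the latter case I exploit the orthogonal-cluster structure of $X$: each $Y_i^{100}$ lives in coordinate directions disjoint from those of $\hat{S}$ and is anchored at $\hat{s}_i$. Consequently, the coordinate projection $\pi: X \to \hat{S}$ defined by $\pi(\hat{s}_j) := \hat{s}_j$ and $\pi(v) := \hat{s}_i$ for $v \in Y_i^{100}$ is $\ell^1$-nonexpansive, and therefore induces a simplicial retraction $\pi_* : \Rips(X)_{s'} \to \Rips(\hat{S})_{s'}$. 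Since $\lfloor 2s'/r \rfloor \leq 9 < 100/3$, Adamaszek's classification of cyclic Rips complexes gives $\Rips(\hat{S})_{s'} \simeq S^1$ with $c_{\hat{S}}$ as generator, and the retraction lifts this nontriviality back to $H_1(\Rips(X)_{s'})$.

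Combining these observations, the internal map in question sends $c_{\hat{S}}$ to itself and is therefore nonzero, yet under the hypothetical interleaving it factors through $H_1(\nDRips(W)_{\gamma(k_0,r_0)}) = 0$, a contradiction. The only genuinely subtle step is verifying nontriviality of $c_{\hat{S}}$ inside the enriched complex $\Rips(X)_{s'}$; this is handled cleanly by the projection-retraction $\pi_*$, and the orthogonal-cluster design of $X$ is precisely what forces $\pi$ to be simplicial.
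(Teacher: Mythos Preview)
Your argument is correct and follows the paper's strategy closely: exhibit a bigrade at which $\nDRips(X)$ is the $100$-gon on $\hat S$, show that $\nDRips(W)$ at the shifted bigrade has trivial $H_1$, and verify that the cycle $c_{\hat S}$ survives to the doubly-shifted bigrade in $\nDRips(X)$, forcing a contradiction with any putative $\gamma^{\delta,c}$-interleaving.

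The one substantive tactical difference is in how you handle the output complex $Q=\nDRips(X)_{\gamma\circ\gamma(k_0,r_0)}$. The paper does not attempt to identify $Q$ exactly; instead it sandwiches $Q$ between the subcomplex $T'$ on $\hat S$ and a larger complex $T$ (the full preimage under the projection $f\colon X\to\hat S$), invokes Quillen's Theorem~A to show $T'\hookrightarrow T$ is a homotopy equivalence, and then applies Adamaszek's results to $T'$. You instead use your sharper control on the degree threshold (forcing it into $(100,102]$) to identify $Q$ precisely as either $\Rips(\hat S)_{s'}$ or $\Rips(X)_{s'}$, and in the latter case use the $1$-Lipschitz retraction $\pi$ directly. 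Your route is more elementary, avoiding Quillen's Theorem~A; the paper's route is more robust to the exact value of the threshold. Both ultimately reduce to the same Adamaszek input, and both implicitly use that at scales below $5r$ (respectively $17r$) the $\ell_1$ metric on $\hat S$ agrees with the graph metric on $C_{100}$ for the pairs that matter, so that the Rips $1$-skeleton really is $C_{100}^k$.
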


\begin{proof}[Proof]
By \cref{Rem:Outliers}, we
have \[\Pro(\mu_W,\mu_X)\leq\Pro(\nu_W,\nu_X)\leq
\frac{100}{30100}=\frac{1}{301}.\]  

Choose $\delta$ in the open interval
$\left(\frac{1}{301},\frac{2}{301}\right)$, 
and choose \[\epsilon\in \left(0,\min\left(\frac{r}{2},\frac{r(3-c)-2\delta}{2c}\right)\right].\]  

Using \cref{Lem:Dist_Obvs}, and noting that $\frac{1}{100}=\frac{301}{30100}$, it is easily checked that $\nDRips(X)_{(\frac{1}{100},\frac{r}{2}+\epsilon)}$ has vertex set $\hat S$, and edge set \[\{[\hat s_i,\hat s_j]\mid (j-i)\equiv 1\mod 100\}.\]  Thus, $\nDRips(X)_{(\frac{1}{100},\frac{r}{2}+\epsilon)}$ is homeomorphic to a circle.  

We note that for $s\leq \frac{3r}{2}$ and any $k>0$, $\nDRips(W)_{(k,s)}$ is either empty or 
a disjoint union of 100 299-dimensional simplices.
So in particular, since 
\[c\left(\frac{r}{2}+\epsilon\right)+\delta<\frac{cr}{2}+\frac{r}{2}(3-c)-\delta+\delta\leq \frac{3r}{2}.\]
we have that
\[\nDRips(W)_{\gamma^{\delta,c}(\frac{1}{100},\frac{r}{2}+\epsilon)}=\nDRips(W)_{(\frac{1}{100}-\delta,c(\frac{r}{2}+\epsilon)+\delta)}\] is either empty or a disjoint union of simplices.  
Let us write 
\begin{align*}
\vec a&:=\left(\frac{1}{100},\frac{r}{2}+\epsilon\right)\\
\vec b&:=\gamma^{\delta,c}\circ
  \gamma^{\delta,c}\left(\frac{1}{100},\frac{r}{2}+\epsilon\right)
\end{align*}
  If we can
show that the inclusion \[j\colon \nDRips(X)_{\vec a}\hookrightarrow \nDRips(X)_{\vec b}\] induces a non-trivial map on $H_1$, then we may conclude that $\nDRips(W)$ and $\nDRips(X)$ are not $\gamma^{\delta,c}$-homotopy interleaved, by essentially the same argument as we used in the special case $c\in [1,2)$, thereby completing the proof.  To show that $j$ has the desired property, we will identify simplicial complexes $T'$ and $T$ such that
\[\nDRips(X)_{\vec a}\subset T'\subset \nDRips(X)_{\vec b}\subset T\]
and the inclusions \[\nDRips(X)_{\vec a}\hookrightarrow T'\hookrightarrow T\] are both homotopy equivalences.  Thus, the composition of these maps is also a homotopy equivalence, and since $j$ is a factor of this composition, it must induce a non-trivial map on $1^{\mathrm{st}}$ homology.

Writing $\vec b= (b_1,b_2)$, we let $T'=\Rips(\hat S)_{b_2}$.  Note that since the vertex set of $\nDRips(X)_{\vec a}$ is $\hat S$, we have \[\nDRips(X)_{\vec a}\subset T'\subset \nDRips(X)_{\vec b}.\]  
To define $T$, let $f\colon X\to \hat S$ be the surjection such that $f(x)=\hat s_i$ for each $x\in Y_i\cup \{\hat s_i\}$.  Let \[T=\{\sigma\subset X\mid f(\sigma)\in T'\}.\]  

 To see that $\nDRips(X)_{\vec b}\subset T$, note that $f$ is 
 distance non-increasing, so $f$ is a simplicial map $\Rips(X)_t\to
 \Rips(\hat S)_t$ for any $t> 0$.  In particular, \[f\colon \Rips(X)_{b_2}\to
 \Rips(\hat S)_{b_2}=T'\] is a simplicial map, which implies that 
 $f(\sigma)\in T'$ for all $\sigma\in \Rips(X)_{b_2}$.  We thus have \[\nDRips(X)_{\vec b}\subset \Rips(X)_{b_2}\subset T.\]  
 
Now $f$ is a simplicial retraction from $T$ to $T'$, and for
$\sigma\in T'$, $f^{-1}(\sigma)$ is a simplex in $T$.  Therefore, by
Quillen's theorem A for simplicial complexes \cite{quillen1973higher},
$f\colon T\to T'$ is a homotopy equivalence. Since $f\circ i=\Id_{T'}$
is also homotopy equivalence, it then follows from the 2-out-of-3
property that the inclusion $i\colon T'\hookrightarrow T$ is a homotopy equivalence.

It remains for us to check that the inclusion \[\nDRips(X)_{\vec a}\hookrightarrow T'\] is a homotopy equivalence.  As we now explain, this is a special case of a result in \cite{adamaszek2017vietoris}.  Adapting the notation of \cite{adamaszek2017vietoris}, let 
$C_{100}^k$ denote the graph with vertices $1,\ldots,100$, and an edge connecting $i$ and $j$ if and only if \[(j-i) \equiv \pm l \mod m\] for some $l\in \{1,\ldots, k\}$.  Note that $\nDRips(X)_{\vec a}$ is isomorphic to the graph $C_{100}^1$, and by symmetry, the graph underlying $T'$ is isomorphic to $C_{100}^{k}$ for some $k\in \{1,\ldots 50\}$.  We now show that in fact, $k\leq 33$: Note that 
\[\vec b=
\left(\frac{1}{100}-2\delta,\ c^2\left(\frac{r}{2}+\epsilon\right)+(c+1)\delta\right).\]  Since $c<3$, $\epsilon<\frac{r}{2}$, and \[\delta<\frac{2}{301}<2r,\] we have that 
\[c^2\left(\frac{r}{2}+\epsilon\right)+(c+1)\delta<9\left(\frac{r}{2}+\frac{r}{2}\right)+(3+1)2r=17r.\]  
Thus, $[\hat s_i,\hat s_j]\in T'$ only if $j-i \equiv \pm l \mod 100$ for some $l<34$.  Therefore, $k\leq 33$ as claimed.  

According to \cite[Proposition 3.14, Theorem 4.3, and Theorem 4.9]{adamaszek2017vietoris}, whenever $1\leq k< \frac{100}{3}$, the inclusion $C_{100}^1\hookrightarrow C_{100}^k$ is a homotopy equivalence.  Thus, the inclusion $\nDRips(X)_{\vec a}\hookrightarrow T'$ is a homotopy equivalence, as desired.
\end{proof}

\bibliographystyle{abbrv}
\bibliography{PS_Refs}

\end{document}